\documentclass[12pt,a4paper,reqno]{amsart}

\usepackage{amssymb}
\usepackage{amsmath}
\usepackage[mathscr]{euscript}
\usepackage{times}
\usepackage{newcent}
\usepackage{xcolor}
\usepackage{graphicx}
\usepackage{hyperref}
\usepackage[top=30truemm,bottom=30truemm,left=25truemm,right=25truemm]{geometry}
\usepackage{tikz}
\usepackage{comment}

\makeatletter
\@addtoreset{equation}{section}
\makeatother

\newtheorem{theorem}{Theorem}[section]
\newtheorem{lemma}[theorem]{Lemma}
\newtheorem{proposition}[theorem]{Proposition}
\newtheorem{corollary}[theorem]{Corollary}

\newtheorem{remark}[theorem]{Remark}

\newcommand{\mc}[1]{{\mathcal #1}}
\newcommand{\mf}[1]{{\mathfrak #1}}

\newcommand{\bb}[1]{{\mathbb #1}}

\newcommand{\e}{\epsilon}

\newcommand{\lan}{\langle}
\newcommand{\ran}{\rangle}

\newcommand{\N}{\bb N}
\newcommand{\Z}{\bb Z}

\newcommand{\R}{\bb R}

\newcommand{\T}{\bb T}

\newcommand{\Lip}{{\rm Lip}}

\newcommand{\supp}{{\rm supp}\,}
\newcommand{\ac}{{\rm ac}}

\renewcommand{\tilde}{\widetilde}

\begin{document}

\title[LDP for ZRP]{Large deviations for additive functionals of one-dimensional zero-range processes}
\author[I. Seo]{Insuk Seo}
\address{Department of Mathematical Sciences and Research Institute of Mathematics, Seoul National University, Republic of Korea.}
\email{insuk.seo@snu.ac.kr}

\author[K. Tsunoda]{Kenkichi Tsunoda}
\address{Faculty of Mathematics, Kyushu University, Japan.}
\email{tsunoda@math.kyushu-u.ac.jp}
\subjclass[2020]{Primary 60F10; Secondary 60K35, 60J55.}
\keywords{Zero-range process, additive functional, occupation time,
dynamical large deviations, super-exponential replacement.}
\date{\today}

\maketitle

\begin{abstract}
We prove a large deviation principle for additive functionals of local Lipschitz functions of a one-dimensional symmetric zero-range process on the diffusive scale. The rate function is obtained from the dynamical large deviation principle for the empirical density and is characterized by a variational constraint involving the density at a fixed spatial point. On the microscopic side, a super-exponential replacement is needed to replace the local observable with a function of the empirical density. On the macroscopic side, the spatial regularity enjoyed by finite-rate trajectories is needed to pass from spatial averages to point evaluation.
\end{abstract}

\section{Introduction}

Additive functionals are basic objects in probability theory. They represent the accumulated value of an observable along the evolution of the process and appear in many problems concerning fluctuations and large deviations. Classical contributions include the long-time large deviation theory of Donsker and Varadhan \cite{dv75} and the central limit theorem for additive functionals of reversible Markov processes due to Kipnis and Varadhan \cite{kv86}.

In systems with many interacting particles, one can look at the system on two different spatial scales. A local observable describes what happens near a fixed site, while the empirical density captures the bulk behavior of the system on the macroscopic scale. For many conservative particle systems, the empirical density converges, under diffusive scaling, to the solution of a hydrodynamic equation. Large deviations from this macroscopic evolution are also well understood in several models, including exclusion and zero-range processes and more general conservative lattice gases \cite{kov89,bkl95,qrv99,bfg13,fg23,fgh25}.

It is then natural to ask whether the large deviation principle for the empirical density can be used to study a local additive functional. The occupation time of a fixed site is a basic example. More generally, let $h$ denote an observable which depends only on the configuration in a fixed finite region. If the system is locally close to equilibrium with density $r$, let $H(r)$ denote the equilibrium expectation of $h$ at that density. One expects the time integral of $h$ to be approximated, on the macroscopic scale, by the time integral of $H$ evaluated along the macroscopic density at the observed point.

There is, however, a difficulty in making this argument rigorous. The dynamical large deviation principle for the empirical density is formulated in a weak topology, which controls spatial averages of the density but not its value at a prescribed point. Hence the point-evaluation functional is not continuous, and the contraction principle cannot be applied directly. To overcome this obstruction, we approximate the density at the observed point by averages over shrinking neighborhoods and use the spatial regularity of finite-rate trajectories to remove this averaging. We answer this question for the one-dimensional symmetric zero-range process on a discrete torus with $N$ sites, where $N$ denotes the system size. Our main contribution is to show how the dynamical large deviation principle for the empirical density can be used to derive a large deviation principle for microscopic local additive functionals.

More precisely, for every local Lipschitz function $h$, we prove a large deviation principle, on the diffusive scale and with speed $N$, for its time integral. To describe the rate function, write $\rho(t,\theta)$ for the macroscopic density at time $t$ and position $\theta$, and let $I_\alpha$ denote the dynamical large deviation rate function when the process starts from equilibrium at density $\alpha$. Theorem \ref{thm5-1} shows that the rate function at $a$ is obtained by minimizing $I_\alpha$ over density trajectories satisfying the constraint $\int_0^T H(\rho(t,0))dt=a.$ When $h$ is the number of particles at the origin, this gives the large deviation principle for the occupation time of that site.

The proof has two main steps. First, on the macroscopic side, the spatial regularity of finite-rate trajectories allows us to approximate the density at a fixed point by spatial averages, uniformly on level sets of the dynamical rate function. Second, on the microscopic side, a super-exponential replacement estimate allows us to replace the local observable by a quantity determined by the empirical density. An approximation argument based on the contraction principle can then be applied to the regularized functional, and the spatial averaging is removed using the regularity of finite-rate trajectories. In this way, the large deviation principle for the empirical density yields the same result for the local additive functional.

For the zero-range process, an additional difficulty arises because a local Lipschitz function is unbounded in general. A local Lipschitz observable need not be bounded, because arbitrarily many particles may occupy a single site. To overcome this difficulty, Theorem \ref{thm5-2} establishes the required super-exponential replacement estimate. We first cut off configurations with high local density, prove the replacement estimate after this cutoff, and then show that the discarded contribution is super-exponentially small.

The restriction to one dimension is intrinsic to the problem, and this dimensional dependence appears on both the microscopic and macroscopic sides. For the occupation time of the symmetric simple exclusion process, it is known that the large deviation speeds over a microscopic time interval of length $t$ are $\sqrt{t}$ in dimension one, $t/\log t$ in dimension two, and $t$ in dimensions three and higher \cite{lan92,cll04}. Under diffusive scaling, these correspond to $N$, $N^2/\log N$, and $N^2$, respectively. Note that the dynamical large deviation principle for the empirical density has speed $N^d$ in dimension $d$. Therefore, the two scales coincide only in dimension one.

There is also an analytic reason for this dimensional dependence: in one dimension, finite-rate trajectories have enough spatial regularity, through a Sobolev-type estimate, to recover the density at a point from averages over shrinking intervals. In dimension two, a different spatial structure appears; for the exclusion process it is described by the polar empirical measure of Chang, Landim and Lee \cite{cll04,cll19}. Thus, higher-dimensional occupation-time problems are not direct extensions of the one considered here.

We now briefly situate our result in the existing literature, organized by the scale of deviations. At the fluctuation scale, additive functionals of interacting particle systems have been studied extensively, beginning with the work of Kipnis and Varadhan \cite{kv86} and followed by, among others, Sethuraman and Xu \cite{sx96}, Quastel, Jankowski and Sheriff \cite{qjs02}, and Gon\c{c}alves and Jara \cite{gj13}. At the intermediate moderate-deviation scale, Gao and Quastel obtained results for stochastic lattice gases and occupation times of the symmetric exclusion process \cite{gq24,gq25}. By contrast, large deviations for local additive functionals are much less developed. For the symmetric simple exclusion process, the main existing results concern occupation times, including the works of Landim \cite{lan92} and Chang, Landim and Lee \cite{cll04,cll19}. For independent particle systems, Benois \cite{ben96} studied large deviations for occupation times. In a related direction, Avena, Jara and V\"{o}llering established a joint path large deviation principle for a slowed random walk driven by the symmetric simple exclusion process \cite{ajv18}.

Our result applies to additive functionals of general local Lipschitz functions, rather than only to a particular occupation-time observable. The argument is not limited to gradient systems: for one-dimensional nongradient exclusion processes, the available dynamical large deviation principles and replacement estimates lead to the same type of result \cite{fuy95,vy97,bfg13}. What is specific to the zero-range process is the additional difficulty caused by unbounded local functions, which requires the high-density cutoff developed in this paper.

The paper is organized as follows. Section \ref{sec:model} introduces the model and states the main results. Section \ref{sec:ratefunction} identifies the rate function and proves the energy estimates used to control the density at a fixed point. Section \ref{sec:replacement} proves the super-exponential replacement theorem. Section \ref{sec:proof} completes the proof of the large deviation principle for the additive functional.

We use the following notation throughout the paper.
\begin{itemize}
\item We write $\mathbb{N}:=\{0,1,\ldots\}$, $\mathbb{R}_+:=[0,\infty)$,
and use $\mathbb{Z}$ for the set of integers.
\item The symbols $C,C_0,C_1,\ldots$ denote positive constants whose values may change
from line to line. Unless otherwise specified, these constants are independent of the parameters under consideration.
\item We write $\mathbf{1}\{A\}$ for the indicator function of an event or a condition $A$.
\item Microscopic positions are denoted by $x,y,z$, while macroscopic positions are denoted by $\theta,\theta_1,\theta_2$.
\item The symbol $\alpha>0$ denotes the fixed reference density, while $\rho$ denotes a generic density or density profile.
\item The operators $\partial_t$, $\nabla$, and $\Delta$ denote the time derivative, the spatial gradient, and the spatial Laplacian, respectively.
\item For a configuration $\eta$, $\tau_x\eta$ denotes its spatial shift: $(\tau_x\eta)(y):=\eta(x+y)$.
\item When $\epsilon N$ or $\varepsilon N$ appears as a block size or as the upper limit of a sum, we take its integer part.
\item For quotients involving the density, we use the convention
$0/0:=0$ and $a/0:=\infty$ for $a>0$.
\end{itemize}

\newpage

\section{Model and results}\label{sec:model}

In this section, we introduce our model and main result.

\subsection{Zero-range process}

Fix a positive integer $N$ and let $\T_N:=\Z/N\Z$ be a one-dimensional discrete torus.
Generic elements of $\T_N$ are denoted by $x,y,z$ and sums in $\T_N$ are taken modulo $N$.
Let $X_{N}:=\mathbb{N}^{\mathbb{T}_{N}}$. We write $\eta:=(\eta(x))_{x\in\mathbb{T}_{N}}\in X_{N}$
for a particle configuration on $\mathbb{T}_{N}$ where site $x$ contains $\eta(x)$ particles.

The zero-range process $(\eta_{t})_{t\ge0}:=(\eta_{t}^{N})_{t\ge0}$
on $\mathbb{T}_{N}$ is a continuous-time Markov chain on $X_{N}$
whose generator is given by
\begin{align}\label{2-4}
( L_{N}f)(\eta):=N^2\sum_{x, y\in\mathbb{T}_{N}}g(\eta(x))P(x,y)\left[f(\eta^{x,y})-f(\eta)\right], 
\end{align}
where $\eta^{x,y}$ stands for the configuration obtained after a jump of a particle from $x$ to $y$ and hence defined only when $\eta(x)\ge 1$:
\begin{align*}
\eta^{x,y}(z):=
\begin{cases}
\eta(x)-1,\quad \text{if $z=x$},\\
\eta(y)+1,\quad \text{if $z=y$},\\
\eta(z),\quad \text{otherwise}.\\
\end{cases}
\end{align*}
Here $P(\cdot,\cdot)$ is the transition kernel for the underlying
random walk constituting the particle system, and $g:\N\to\R_+$
is the function governing the zero-range interaction between particles.
For simplicity, we restrict attention to the symmetric nearest-neighbor case,
that is, $P(x,y):=1/2$ if  $|x-y|=1$, $P(x,y):=0$, otherwise.
Then the generator $L_N$ can be rewritten as
\begin{align*}
( L_{N}f)(\eta)=\dfrac{N^2}{2}\sum_{x\in\mathbb{T}_{N}}\sum_{e=\pm1}g(\eta(x))\left[f(\eta^{x,x+e})-f(\eta)\right].
\end{align*}
We note that it is straightforward to generalize our results
to mean-zero, irreducible, translation-invariant, and finite-range cases.

We shall assume the following conditions on the jump rate $g$ throughout this article:
$g(0)=0, g(k)>0$ for any $k=1,2,\ldots$ and
\begin{align}
&g^*:=\sup_{k\in\N}|g(k+1)-g(k)|<\infty, \tag{LG}\label{LG}\\
&\exists k_0\in\N, c_*>0 \quad \text{such that} \quad g(k_1)-g(k_2)\ge c_*, \quad \forall k_1\ge k_2+k_0. \tag{M}\label{M}
\end{align}
Then, under \eqref{LG} and \eqref{M}, there exist constants
$c_0, C_0>0$ such that for any $k\in\N$,
\begin{align}\label{2-1}
c_0k \le g(k) \le C_0 k.
\end{align}

Define the partition function $Z:\mathbb{R}_{+}\to\mathbb{R}_{+}$
as
\begin{equation}\label{2-2}
Z(\varphi):=\sum_{k=0}^{\infty}\frac{\varphi^{k}}{g(k)!}, \quad \varphi\in \R_+,
\end{equation}
where $g(k)!$ is a shorthand for $g(1)\times\cdots\times g(k)$ with
the convention $g(0)!:=1$. By \eqref{2-1}, we can verify that the
radius of convergence of the power series \eqref{2-2} is $\infty$.
Therefore, the partition function $Z$ defines a smooth, increasing function
from $\mathbb{R}_{+}$ to $\mathbb{R}_{+}$.

For each $\varphi\in\R_+$, define $\widetilde{\nu}_{\varphi}^{N}$ to be
the translation-invariant product measure on $X_N$
with marginal
\[
\widetilde{\nu}_{\varphi}^{N}(\eta:\eta(x)=k):=\frac{1}{Z(\varphi)}\frac{\varphi^{k}}{g(k)!}, \quad  k\in\mathbb{N} .
\]
Then, for all $\varphi \ge 0$, the measure $\widetilde{\nu}_{\varphi}^{N}$
is an invariant measure for the zero-range process $(\eta_{t})_{t\ge0}$.
Now we define
\[
R(\varphi):= \int \eta(0) \widetilde\nu_\varphi^N(d\eta), \quad \varphi\in \R_+,
\]
so that $R(\varphi)$ denotes the particle density corresponding to
$\widetilde{\nu}_{\varphi}^{N}$. Then, we can readily verify that
$R:\mathbb{R}_{+}\to\mathbb{R_{+}}$ is a strictly increasing
bijection, and hence has an inverse function $\Phi:\R_+\to\R_+$.
Therefore, we can parametrize the invariant measure by $\rho$: $\nu_{\rho}^{N}:=\widetilde{\nu}_{\Phi(\rho)}^{N}$.
For simplicity, we sometimes denote $\nu_\rho^N$ by $\nu_\rho$.
For a measure $\mu$ on $X_N$, we denote expectation with respect to $\mu$ by $E_\mu$.
Then, one can readily deduce that $\Phi(\rho) = E_{\nu_\rho}\left[ g(\eta(0))\right], \rho\in\R_+$.
Thus, $\Phi$ is smooth on $\R_+$ and, by \eqref{2-1}
\begin{equation}\label{boundPhi}
c_0\rho\le\Phi(\rho)\le C_0\rho, \quad  \rho\in\mathbb{R}_{+}.
\end{equation}
In addition, under \eqref{LG} and \eqref{M} it is shown in \cite[(5.2)]{lsv96} that for some constants $c_1,C_1>0$,
\begin{align}\label{2-5}
c_1\rho\le E_{\nu_\rho}\left[\left(\eta(0)-\rho\right)^2 \right] \le C_1\rho, \quad \rho\in\R_+.
\end{align}
Moreover, since
\begin{align*}
\Phi'(\rho)=\dfrac{\Phi(\rho)}{E_{\nu_\rho}\left[\left(\eta(0)-\rho\right)^2 \right] }, \quad \rho>0,
\end{align*}
there exist constants $c_2, C_2>0$ such that
\begin{equation}\label{boundPhi'}
c_2\le \Phi'(\rho) \le C_2, \quad  \rho\in\mathbb{R}_{+}.
\end{equation}
In particular, $\Phi$ is Lipschitz continuous on $\R_+$.

Let $\T:=\R/\Z=[0,1)$ be a one-dimensional torus. A generic element of $\T$ is denoted by $\theta$.
Define the empirical measure corresponding to the zero-range process $(\eta_{t})_{t\ge0}$ by
\[
\pi_{t}^{N}:=\frac{1}{N}\sum_{x\in\mathbb{T}_{N}}\eta_{t}^N(x)\delta_{x/N},
\]
where $\delta_{\theta}$ represents the Dirac measure at $\theta\in\T$. We fix $T>0$
and regard $(\pi_{t}^{N})_{t\in[0,T]}$ as a Markov process on the
space of nonnegative Radon measures on $\mathbb{T}$ which will be denoted
by $\mc M_{+}(\mathbb{T})$, endowed with the weak topology.

The hydrodynamic limit for the zero-range process is known; see \cite[Theorem 5.1.1]{kl99}.
The hydrodynamic limit states that, under an appropriate assumption on the convergence of $\pi_0^N$ to
a given measurable function $\rho_0:\T\to\R_+$ as $N\to\infty$, $\pi_t^N$ converges as $N\to\infty$
to the unique weak solution to the Cauchy problem
\begin{align*}
\begin{cases}
\partial_t\rho=\dfrac12\Delta \Phi(\rho),\\
\rho(0,\cdot)=\rho_0(\cdot).
\end{cases}
\end{align*}

\subsection{Large deviations for empirical measure}

Define $\Omega_{T}:=D([0, T], \mc {M}_{+}(\mathbb{T}))$
as the space of all c\`adl\`ag trajectories taking values in $\mc M_+(\bb \T)$.
For $\pi\in\Omega_{T}$, we write $\pi_t$ or $\pi(t,\cdot)$ for the marginal of $\pi$ at time $t\in[0, T]$. 
Fix a reference density $\alpha>0$.
Let $\bb P_\alpha^N$ be the law of the process $(\eta_{t}^N)_{t\in[0,T]}$ on 
$D([0,T], X_N)$ started from the invariant measure $\nu_{\alpha}^{N}$,
and let $\bb Q_\alpha^N$ be the law of the process $(\pi_{t}^{N})_{t\in[0, T]}$ on $\Omega_T$.

Define $\Omega_T^\ac$ as the space of $\pi\in\Omega_{T}$ such that $\pi_{t}$ is absolutely
continuous with respect to the Lebesgue measure, denoted by $d\theta$, for all $t\in[0, T]$.
For $G\in C^{1, 2}([0, T]\times\mathbb{T})$ and  $\pi\in\Omega_T^\ac$ with $\pi_{t}(d\theta)= \rho(t, \theta)d\theta$
for $t\in[0, T]$, define
\begin{align*}
\mc J_G(\pi):= & \left\langle  \rho(T, \cdot), G(T, \cdot)\right\rangle -\left\langle  \rho(0, \cdot), G(0, \cdot)\right\rangle -\int_{0}^{T}dt \left\langle  \rho(t, \cdot), \partial_{t}G(t, \cdot)\right\rangle \\
 & -\frac{1}{2}\int_{0}^{T}dt \left\langle \Phi( \rho(t, \cdot)), \Delta G(t, \cdot)+[\nabla G(t, \cdot)]^{2}\right\rangle  .
\end{align*}
In the previous formula and in what follows, we use $\lan\cdot, \cdot\ran$ for the $L^2$-inner product
of functions $f_1,f_2:\T\to\R$ or the integral of a function $f:\T\to\R$ with respect to a measure $m$ on $\T$ whenever these expressions are well defined:
\begin{align*}
\lan f, g \ran :=\int_\T f(\theta)g(\theta)d\theta \quad \text{or} \quad \lan m, f\ran:=\int_\T f(\theta)m(d\theta).
\end{align*}

For each $\pi\in\Omega_T^\ac$ with $\pi_{t}(d\theta)= \rho(t, \theta)d\theta$ for $t\in[0, T]$,
we define the energy $\mc E(\pi)$ by
\begin{align*}
\mc E(\pi):=\sup_{G\in C^{0,1}([0,T]\times\T)}
\left\{ \int_0^Tdt  \left\lan \Phi( \rho(t,\cdot)), 2\nabla G(t,\cdot) -\left[ G(t,\cdot) \right]^2 \right\ran  \right\}.
\end{align*}
For $\pi\in\Omega_T\setminus\Omega_T^\ac$, we set $\mc E(\pi):=\infty$.
It is standard to see that $\mc E(\pi)$ is finite if and only if there exists a generalized derivative of $\Phi( \rho(t,\theta))$,
denoted by $\nabla\Phi( \rho(t,\theta))$, which belongs to $L^1([0,T]\times\T)$ and satisfies
\begin{align*}
\mc E(\pi)=\int_0^Tdt\int_\T d\theta \dfrac{|\nabla \Phi( \rho(t,\theta))|^2}{\Phi( \rho(t,\theta))}<\infty.
\end{align*}
See \cite[Chapter 5, Section 7]{kl99} for the details.
Moreover, under assumptions \eqref{LG} and \eqref{M},
$\mc E(\pi)<\infty$ implies that 
$ \rho(t,\theta)$ has a generalized derivative, denoted by $\nabla \rho(t,\theta)$, which belongs to $L^1([0,T]\times\T)$ and satisfies
\begin{align*}
\int_0^Tdt\int_\T d\theta \dfrac{|\nabla  \rho(t,\theta)|^2}{ \rho(t,\theta)} \le C\mc E(\pi)<\infty,
\end{align*}
for some constant $C>0$. Indeed, the first inequality follows from a chain rule and the bounds \eqref{boundPhi} and \eqref{boundPhi'}.

Now we explain the rate function corresponding to the large deviation principle for the empirical measure.
The dynamical rate function $I_{\textup{dyn}}:\Omega_{T}\to[0, \infty]$ is defined by
\begin{align*}
I_{\textup{dyn}}(\pi):=
\begin{cases}
\displaystyle\sup_{G\in C^{1, 2}([0, T]\times\mathbb{T})} \mc J_{G}(\pi), \quad \text{if $\pi\in\Omega_T^\ac$ and $\mc E(\pi)<\infty$},\\
\infty, \quad \text{otherwise}.
\end{cases}
\end{align*}
We next define the initial rate function $I_{\textup{\ensuremath{\alpha}}}^{0}$
with respect to the measure $\nu_{\alpha}^N$. If a measure $m\in\mc {M}_{+}(\mathbb{T})$
is not absolutely continuous with respect to the Lebesgue measure on $\T$,
then we define $I_{\textup{\ensuremath{\alpha}}}^{0}(m):=\infty$.
Otherwise, we write $m(d\theta)=\rho(\theta)d\theta$ and define
\begin{equation*}
I_{\textup{\ensuremath{\alpha}}}^{0}(m):=
\int_{\mathbb{T}}\left[\rho(\theta)\log\frac{\Phi(\rho(\theta))}{\Phi(\alpha)}-\log\frac{Z(\Phi(\rho(\theta)))}{Z(\Phi(\alpha))}\right]d\theta,
\end{equation*}
with the usual convention $0\log 0:= 0$.
The full large deviation rate function is now defined by
\[
I_{\alpha}(\pi):=I_{\textup{dyn}}(\pi)+I_{\textup{\ensuremath{\alpha}}}^{0}(\pi_{0}) .
\]

We recall the following large deviation principle for the empirical measure.

\begin{theorem}The sequence of measures $\{\bb Q_{\alpha}^{N}\}_{N=1}^{\infty}$
satisfies a large deviation principle with good rate function $I_{\alpha}$ and speed $N$.
Namely, for every closed set $\mf C$ of $\Omega_T$,
\begin{align*}
\varlimsup_{N\to\infty}\dfrac1N\log \bb Q_\alpha^N\left(\mf C \right) \le - \inf_{\pi\in\mf C} I_\alpha(\pi),
\end{align*}
and for every open set $\mf O$ of $\Omega_T$,
\begin{align*}
\varliminf_{N\to\infty}\dfrac1N\log \bb Q_\alpha^N\left(\mf O \right) \ge - \inf_{\pi\in\mf O} I_\alpha(\pi).
\end{align*}
\end{theorem}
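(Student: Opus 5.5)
This large deviation principle is the classical one of Kipnis, Olla and Varadhan \cite{kov89}, extended to zero-range processes in \cite[Chapter 10]{kl99} and \cite{bkl95}. I will follow that scheme rather than derive anything new, and only check that the present assumptions \eqref{LG}, \eqref{M} are enough.

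\textbf{Step 1: exponential tightness.} The initial measure $\nu_\alpha^N$ is a product measure with exponential moments, since $Z$ is entire. Hence
\[
\bb Q^N_\alpha\bigl(\langle\pi_0^N,1\rangle>A\bigr)\le e^{-cNA}.
\]
Because mass is conserved, this controls the total mass for all $t\in[0,T]$. For time regularity, fix a test function $H$ and use the exponential martingales
\[
\exp\Bigl\{N\langle\pi^N_t,H\rangle-N\langle\pi^N_0,H\rangle-\int_0^t e^{-N\langle\pi^N_s,H\rangle}(\partial_s+L_N)e^{N\langle\pi^N_s,H\rangle}ds\Bigr\}.
\]
Together with the linear growth bound \eqref{2-1} on $g$, these give the modulus-of-continuity estimates in $\Omega_T$ at speed $N$.

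\textbf{Step 2: upper bound.} For compact sets, perturb the dynamics by the martingale above with $G\in C^{1,2}$. The exponent then equals $N\mc J_G(\pi^N)$ up to the term $\frac12\int\langle g(\eta(x)),\Delta G+(\nabla G)^2\rangle$. The key input is the super-exponential replacement lemma, which replaces the local function $g(\eta(x))$ by $\Phi(\eta^{\epsilon N}(x))$, where $\eta^{\epsilon N}(x)$ is the average density over the block of size $\epsilon N$. This replacement holds at speed $N$ because the Dirichlet form under $\nu_\alpha^N$ carries a factor $N^2$ while the relative entropy is $O(N)$. One combines this with the one-block and two-block estimates, handling unbounded occupation numbers through the entropy inequality and exponential moments; \eqref{LG} ensures $g$ grows linearly. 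The initial density contributes $I^0_\alpha$ through the change of measure with $e^{N\langle\pi_0,\gamma\rangle}$. A minimax over $G$ and $\gamma$ on small neighbourhoods then yields $-\inf I_\alpha$ on compacts, and exponential tightness extends this to closed sets. Restricting to $\mc E<\infty$ requires the energy estimate
\[
\bb Q^N_\alpha\bigl(\mc E_k(\pi)>\ell\bigr)\le e^{-CN\ell},
\]
proved from the same exponential martingales with $\nabla G$ and a Feynman--Kac bound.

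\textbf{Step 3: lower bound.} For a smooth trajectory with $\rho$ bounded away from $0$, solving $\partial_t\rho=\frac12\Delta\Phi(\rho)-\nabla(\Phi(\rho)\nabla H)$ identifies the weakly asymmetric process whose hydrodynamic limit is $\rho$. The relative entropy with respect to $\bb P^N_\alpha$ is $N I_\alpha(\pi)+o(N)$; for this, hydrodynamics of the perturbed zero-range process, e.g. via the relative entropy method, are needed. The remaining task is to show that these smooth trajectories are $I_\alpha$-dense: any $\pi$ with $I_\alpha(\pi)<\infty$ is approximated by such $\pi_n$ with $I_\alpha(\pi_n)\to I_\alpha(\pi)$. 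This \textbf{density argument is the main obstacle}. In one dimension I would regularise by a short hydrodynamic evolution followed by convex combination with the constant $\alpha$, then mollify, using convexity of $I_\alpha$ and the energy bound, as in \cite{bkl95,kl99}.

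Goodness of the rate function follows from lower semicontinuity of $I_\alpha$, as a supremum of continuous functionals, together with compactness of its level sets, which comes from the energy bound and the mass bound.
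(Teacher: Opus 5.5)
Your Steps 1 and 2 follow the Benois--Kipnis--Landim scheme that the paper also invokes. Step 3, however, has a genuine gap at exactly the point you flag.

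Your density argument relies on ``convexity of $I_\alpha$'', and this is not available under \eqref{LG} and \eqref{M}. The functional $\mc J_G$ contains $-\frac12\int_0^T\langle\Phi(\rho_t),\Delta G+(\nabla G)^2\rangle\,dt$, so $I_{\textup{dyn}}$ is convex essentially only when $\Phi$ is linear, i.e.\ $g(k)=ck$. This is why the paper restricts the convexity statement of Corollary \ref{cor5-1} to $g(k)=k$ (see Remark \ref{rem5-1}). For instance, let $\rho^1$ be a non-constant solution of $\partial_t\rho=\frac12\Delta\Phi(\rho)$ and let $c>0$ be a constant. Both paths have zero dynamical cost, but $\lambda\rho^1+(1-\lambda)c$ has zero cost only if $\lambda\Delta\Phi(\rho^1)=\Delta\Phi(\lambda\rho^1+(1-\lambda)c)$, which fails in general for nonlinear $\Phi$.

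The arguments of \cite{kov89} and \cite[Chapter 10]{kl99} are written for exclusion, where the drift is linear in $\rho$ and the mobility is concave. There, both the convex-combination step and the mollification step follow from convexity (for mollification, via Jensen and translation invariance). For zero range, $\rho^\lambda=(1-\lambda)\rho+\lambda\alpha$ and $\rho^\delta=\rho*\phi_\delta$ solve the controlled equation only up to the extra terms
\[
\tfrac12\Delta\bigl[(1-\lambda)\Phi(\rho)-\Phi(\rho^\lambda)\bigr]
\quad\text{and}\quad
\tfrac12\Delta\bigl[\Phi(\rho)*\phi_\delta-\Phi(\rho^\delta)\bigr].
\]
One must show that the weighted $H^{-1}$ costs of these terms vanish as $\lambda,\delta\to0$. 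The same is needed for the effect of the changed mobility on the original control term. Your proposal does not address either.

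This is precisely the identification of upper and lower rate functions that the paper does not take from \cite{bkl95}. Instead it uses the approximation theorem \cite[Theorem 8.3]{fgh25}, which relies on the theory of renormalized kinetic solutions of the skeleton equation $\partial_t\rho=\frac12\Delta\Phi(\rho)-\nabla\cdot(\sqrt{\Phi(\rho)}\,u)$, $u\in L^2$, developed in \cite{fg23,fgh25}. The paper checks the hypotheses of that theorem (Assumptions 5.1, 6.3, 7.1 with $\sigma=\sqrt\Phi$) from \eqref{boundPhi} and \eqref{boundPhi'}. To close your argument you need either this result, or an explicit proof that the above costs tend to zero using the two-sided bounds on $\Phi,\Phi'$ and the energy bound.

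A smaller point of the same nature concerns goodness of the rate function. Lower semicontinuity of $I_\alpha$ does not follow from writing it as a supremum of continuous functionals. For nonlinear $\Phi$, the maps $\pi\mapsto\int_0^T\langle\Phi(\rho_t),F_t\rangle\,dt$ entering $\mc J_G$ and $\mc E$ are not continuous for the weak topology on $\Omega_T$. You need the uniform bound $\mc E(\pi)\le C(I_\alpha(\pi)+1)$ of Proposition \ref{energyest}, which the paper again obtains from the entropy-dissipation estimates of \cite{fgh25}. That bound upgrades weak convergence on level sets to strong $L^1$ convergence of the densities. Compactness of level sets also needs the time regularity furnished by $I_{\textup{dyn}}$, not only the mass and energy bounds.
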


The above large deviation principle follows by combining the upper and lower bound arguments of \cite{bkl95}
 with the identification of the corresponding rate functions developed in \cite{fg23,fgh25}.
 The probabilistic estimates in \cite[Theorem 3.1]{fgh25} apply under \eqref{LG} and \eqref{M},
 while the argument of \cite[Lemma 4.2]{fgh25} justifies the finite-energy restriction in the definition of $I_{\mathrm{dyn}}$.
 To extend the lower bound from regular trajectories to all trajectories with finite rate,
 we use the approximation argument of \cite[Theorem 8.3]{fgh25}.

The hypotheses of this approximation result hold because $\Phi$ is smooth and satisfies \eqref{boundPhi} and \eqref{boundPhi'},
which imply Assumptions 5.1, 6.3 and 7.1 of \cite{fgh25}, with $\sigma:=\sqrt{\Phi}$ in Assumption 5.1.
Although \cite{fgh25} is formulated on $\mathbb R^d$,
the relevant proofs adapt to the torus by using periodic mollifiers and omitting spatial cutoffs at infinity.
Consequently, the upper and lower rate functions coincide with $I_\alpha$ defined above.
In particular, no additional monotonicity assumption on $g$ is required.

\subsection{Large deviations for additive functionals}

We denote by $\bb P_\alpha^N$ the law of the process and by $\bb E_\alpha^N$ expectation with respect to $\bb P_\alpha^N$. 

Fix a local Lipschitz function $h:\N^\Z\to\R$. We denote by $\Gamma_h^N$ the additive functional defined by
\begin{align*}
\Gamma_h^N :=  \int_0^T h(\eta_t^N) dt .
\end{align*}
We also define
\begin{align*}
H(\rho):=E_{\nu_\rho}[h], \quad \rho\in\R_+.
\end{align*}
It is well known that the function $H$ is Lipschitz continuous.

For $0<\e<1/2$, let $\iota_\e:\T\to[0,\infty)$ be the function
$\iota_\e(\theta):=(2\e)^{-1}{\bf 1}_{[-\e,\e]}(\theta)$.
Then we define the functions $\psi_\e, \psi$ on $\R$ by
\begin{align}
\psi_\e(a) &:=  \inf \left\{ I_\alpha(\pi): \pi\in\Omega_T, \int_0^T H\left(\lan \pi_t, \iota_\e \ran \right) dt = a \  \right\}  , \notag \\
\psi(a) &:=  \lim_{\delta\to0} \varliminf_{\e\to0} \inf_{b: |b - a|\le\delta} \psi_\e(b), \label{2-3}
\end{align}
respectively.

The following theorem states the large deviation principle for $\Gamma_h^N$.

\begin{theorem}\label{thm5-3}
The sequence $\{\Gamma_h^N\}_{N\in\N}$ satisfies a large deviation principle with good rate function $\psi$
and speed $N$. Namely, for every closed set $C$ of $\R$,
\begin{align*}
\varlimsup_{N\to\infty} \dfrac1N \log \bb P_\alpha^N\left(\Gamma_h^N\in C \right) \le -\inf_{a\in C}\psi(a),
\end{align*}
and for every open set $O$ of $\R$,
\begin{align*}
\varliminf_{N\to\infty} \dfrac1N \log \bb P_\alpha^N\left(\Gamma_h^N\in O \right) \ge -\inf_{a\in O}\psi(a).
\end{align*}
\end{theorem}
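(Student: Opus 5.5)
The plan is to derive Theorem \ref{thm5-3} from the large deviation principle for $\{\bb Q_\alpha^N\}$ through an approximate contraction principle, in the spirit of exponentially good approximations (Dembo--Zeitouni, Theorem 4.2.16). Set
\[
\Gamma_{h,\e}^N:=\int_0^T H\left(\lan \pi^N_t,\iota_\e\ran\right)dt .
\]
This quantity is a functional of the empirical measure alone.

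\emph{Step 1: super-exponential replacement.} We invoke the replacement estimate announced in the introduction (Theorem \ref{thm5-2}). For every $\delta>0$ it should give
\[
\lim_{\e\to0}\varlimsup_{N\to\infty}\frac1N\log\bb P_\alpha^N\left(|\Gamma_h^N-\Gamma_{h,\e}^N|>\delta\right)=-\infty .
\]
The internal route is the usual one-block/two-block scheme, applied after a high-density cutoff, and combined with Feynman--Kac and the Dirichlet-form bound at speed $N$. Since only the origin matters, the cost is of order $N$ rather than $N^d$, which is why speed $N$ is available. We then replace $\iota_\e$, whose indicator makes $\pi\mapsto\lan\pi_t,\iota_\e\ran$ discontinuous, by a continuous approximation $\iota_\e^\kappa$. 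The error this introduces is controlled by the microscopic density in $O(\kappa N)$-sized boundary blocks, and it is super-exponentially small after the same cutoff.

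\emph{Step 2: contraction for fixed $\e$.} For fixed $\e$ (and the smoothed kernel), the map $F_\e(\pi)=\int_0^TH(\lan\pi_t,\iota_\e\ran)dt$ is continuous on $\Omega_T$ at every $\pi$ of finite rate. Indeed, $H$ is Lipschitz, and finite-rate paths are continuous in time with $\sup_t\lan\pi_t,1\ran$ bounded. Unboundedness of the total mass is handled by exponential tightness of the mass: by conservation, $\lan\pi_t,1\ran=\lan\pi_0,1\ran$, and the initial mass has exponential moments under $\nu_\alpha$. With this, the contraction principle gives an LDP for $\Gamma^N_{h,\e}$ with rate $\psi_\e$. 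This uses that $\{I_\alpha\le\ell\}$ is compact and that the infimum defining $\psi_\e$ is attained.

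\emph{Step 3: removing $\e$.} By Dembo--Zeitouni Theorem 4.2.16, Steps 1 and 2 yield a weak LDP for $\Gamma_h^N$ with rate $\psi$ as defined in \eqref{2-3}. Exponential tightness of $\Gamma_h^N$ follows from $|h(\eta)|\le C(1+\sum_{|x|\le r}\eta(x))$ together with an exponential moment bound at speed $N$. That bound comes from the replacement estimate plus exponential tightness of $\sup_t\lan\pi_t,1\ran$.

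\emph{Main obstacle: goodness of $\psi$.} The remaining and hardest step is to upgrade the weak LDP with rate $\psi$ to one with a good rate function. For this I would use the energy estimate of Section \ref{sec:ratefunction}. In one dimension, $\int_0^T\int|\nabla\rho|^2/\rho\le C\mc E(\pi)\le C'(1+I_\alpha(\pi))$, which gives $\nabla\sqrt\rho\in L^2$, and hence $\sqrt\rho(t,\cdot)$ is $\frac12$-Hölder for a.e.\ $t$. Consequently
\[
\sup_{I_\alpha(\pi)\le\ell}\int_0^T|\lan\pi_t,\iota_\e\ran-\rho(t,0)|\,dt\to0 \quad\text{as } \e\to0 .
\]
This uniform convergence on level sets shows that $\psi$ equals the lower semicontinuous regularization of $a\mapsto\inf\{I_\alpha(\pi):\int_0^TH(\rho(t,0))dt=a\}$, and that it has compact level sets. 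Combined with exponential tightness, goodness of $\psi$ yields the full upper and lower bounds. The delicate point here is that the uniformity in time uses only the integrated energy, so the Hölder control must be applied in an $L^1(dt)$ sense via Cauchy--Schwarz.
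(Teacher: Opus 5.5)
\textbf{Gap: Step 1 is asserted, not proved.} Theorem~\ref{thm5-2} does not give an exponentially good approximation of $\Gamma_h^N$ by $\Gamma^N_{h,\e}=\int_0^T H(\lan\pi^N_t,\iota_\e\ran)\,dt$. It gives one by $\int_0^T\frac{1}{\e N}\sum_{x=1}^{\e N}\overline H_{\ell,K}(\eta^{\varepsilon N}_t(x))\,dt$. That is an average, over mesoscopic blocks, of a \emph{bounded, cut-off} function, with four parameters removed in a fixed order. The one-block, local two-block and global replacement estimates (Propositions~\ref{prop5-3}--\ref{prop5-5}) stop there.

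To reach your $\Gamma^N_{h,\e}$ you would still have to do two things:
\begin{itemize}
\item[(i)] Commute the nonlinear $H$ with the spatial average, i.e.\ compare $\int\kappa_\e(\theta)\,\overline H_{\ell,K}(\lan\pi_t,\iota_\varepsilon(\cdot-\theta)\ran)\,d\theta$ with $H(\lan\pi_t,\iota_\e\ran)$. This difference is small only because finite-rate profiles are spatially regular.
\item[(ii)] Remove the cutoffs $\ell,K$ for an unbounded $H$ evaluated on a single window. This needs tail control of the density at the observed point, uniformly on level sets of $I_\alpha$ (Proposition~\ref{prop5-2}, \eqref{5-12}).
\end{itemize}
Neither follows from ``one-block/two-block, Feynman--Kac and the Dirichlet form at speed $N$''. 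As written, Step 1 carries the whole difficulty of the theorem. This is exactly the direct approximation the paper says ``may fail'' for unbounded $h$.

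The paper avoids it. It applies the contraction principle only to the bounded four-parameter functional, which gives $\psi_{\varepsilon,\e,\ell,K}$ in \eqref{5-27}. It then removes the parameters at the level of the variational problems (Lemmas~\ref{lem5-11} and~\ref{lem5-10}, using Lemmas~\ref{lem5-6}, \ref{lem5-7} and Proposition~\ref{prop5-1}).

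The same issue reappears with your smoothed kernel $\iota^\kappa_\e$. Dembo--Zeitouni's Theorem 4.2.16 then produces a rate built from $\inf\{I_\alpha(\pi):\int_0^T H(\lan\pi_t,\iota^\kappa_\e\ran)\,dt=b\}$, not from $\psi_\e$. Identifying it with $\psi$ in \eqref{2-3} again requires uniform regularity on level sets.

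\textbf{The ``main obstacle'' is not one.} The rate function from Theorem 4.2.16(a) is automatically lower semicontinuous. A weak LDP plus exponential tightness already gives the full LDP with a good rate function (Dembo--Zeitouni, Lemma 1.2.18).

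\textbf{How to repair Step 1.} Move the estimate you derive for goodness into Step 1. That estimate is $\sup_{I_\alpha(\pi)\le M}\int_0^T|\lan\pi_t,\iota_\e\ran-\rho(t,0)|\,dt\to0$, together with its analogue at every $\theta$ and a uniform tail bound. Combine it with Theorem~\ref{thm5-2} and the LDP upper bound for $\bb Q^N_\alpha$. This shows that the event $|\Gamma^N_h-\Gamma^N_{h,\e}|>\delta$ forces $I_\alpha\ge M$ once the parameters of Theorem~\ref{thm5-2} are far enough along their limits and $\e$ is small. The upper bound applies because finite-rate points of the closure of that event are absolutely continuous paths, where the window functionals are continuous. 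With this change your single-parameter route is a valid alternative to the paper's direct upper/lower-bound argument. Note that this transferred argument, not goodness of $\psi$, is the substance of the proof.
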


To prove Theorem \ref{thm5-3}, we apply the contraction principle to the large deviation principle for the empirical measure.
For this purpose, we need to approximate $\Gamma_h^N$ by an appropriate function of the empirical measure.
As a first idea, one may approximate $\Gamma_h^N$ by the additive functional
\begin{align*}
\int_0^T H\left(\dfrac{1}{2\e N+1}\sum_{x:|x|\le\e N} \eta_t(x)\right) dt,
\end{align*}
which is a function of the empirical measure.
If the preceding expression is an exponentially good approximation of $\{\Gamma_h^N\}_{N\in\N}$,
\cite[Theorem 4.2.16]{dz98} yields a large deviation principle for
$\{\Gamma_h^N\}_{N\in\N}$ with rate function given by \eqref{2-3}.
In the case of the zero-range process, this procedure may fail for an unbounded local function $h$ and
we need another approximation for $\{\Gamma_h^N\}_{N\in\N}$.

For each $K\in\N$, set
\begin{align*}
h_K(\eta) := 
\begin{cases}
h(\eta) , &  \text{if $\eta(x) \le K$ for any $x\in\supp h$} ,\\
0 , & \text{otherwise} ,
\end{cases}
\end{align*}
where $\supp h$ denotes the support of $h$.
We define 
\begin{align*}
r_K^h(\eta) := h(\eta)-h_K(\eta) \quad\Longleftrightarrow\quad h(\eta) = h_K(\eta)+r_K^h(\eta) .
\end{align*}
For each $\ell,K\in\N$, we also define
$$
H_K(\rho) := E_{\nu_\rho}[h_K] , \quad H_{\ell,K}(\eta) :=  H_K(\eta^\ell(0)) , \quad 
\overline{H}_{\ell,K}(\rho):=E_{\nu_\rho}[H_{\ell,K}].
$$
Here, for $x\in\T_N$, $\eta^\ell(x)$ denotes the average over the box with center $x$ and side length $2\ell+1$:
\begin{align*}
\eta^\ell(x):=\dfrac{1}{2\ell+1}\sum_{y:|y-x|\le\ell} \eta(y).
\end{align*}
When $h(\eta)$ is given by $g(\eta(0))$,
we shall denote $h_K(\eta), H_K, H_{\ell,K}, \overline H_{\ell,K}$
by $g_K(\eta(0)),$ $\Phi_K,$ $\Phi_{\ell,K},$ $\overline \Phi_{\ell,K}$, respectively.

We recall the notion of Lipschitz functions.
A local function $h:\N^\Z\to\R$, supported on $A\subset\Z$, is said to be
Lipschitz if there exists a finite constant $C_0=C_0(h)$ such that
\begin{align*}
| h(\eta) - h(\xi) | \le  C_0\sum_{x\in A} |\eta(x) - \xi(x)| ,
\end{align*}
for any configurations $\eta, \xi\in \N^\Z$.

The following result plays a key role in proving Theorem \ref{thm5-3}.

\begin{theorem}\label{thm5-2}
Let $h$ be a local Lipschitz function on $\N^\Z$.
Then, for any $0\le t \le T$ and  $\delta>0$, we have
\begin{align*}
\varlimsup_{K\to\infty}\varlimsup_{\ell\to\infty}\varlimsup_{\e\to0}\varlimsup_{\varepsilon\to0}\varlimsup_{N\to\infty}
\dfrac{1}{N}\log \bb P_\alpha^N\left( \left| \int_0^t V(\eta_s)ds \right| \ge\delta\right)  = -\infty ,
\end{align*}
where
\begin{align*}
V(\eta) \equiv  V_{N,\varepsilon,\e,\ell,K}(\eta) :=  h(\eta) -\dfrac{1}{\e N}\sum_{x=1}^{\e N} \overline H_{\ell,K}(\eta^{\varepsilon N}(x)) .
\end{align*}
\end{theorem}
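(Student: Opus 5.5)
We decompose $V$ telescopically and treat each piece with a super-exponential estimate. Write $h = h_K + r_K^h$ and
\begin{align*}
V = r_K^h + \bigl[h_K - \tfrac{1}{\e N}\textstyle\sum_{x=1}^{\e N}\tau_x h_K\bigr] + \tfrac{1}{\e N}\textstyle\sum_{x=1}^{\e N}\bigl[\tau_x h_K - H_{\ell,K}(\tau_x\eta)\bigr] + \tfrac{1}{\e N}\textstyle\sum_{x=1}^{\e N}\bigl[H_{\ell,K}(\tau_x\eta) - \overline H_{\ell,K}(\eta^{\varepsilon N}(x))\bigr].
\end{align*}
It suffices to show that each of the four time integrals exceeds $\delta/4$ with super-exponentially small probability in the stated iterated limit.

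For each piece we use the standard route. By the exponential Chebyshev inequality and the Feynman--Kac formula (the process starts from the reversible measure $\nu_\alpha^N$), we get
\[
\frac1N\log \bb P_\alpha^N\Bigl(\int_0^t W(\eta_s)ds\ge \delta\Bigr)\le -a\delta + t\sup_f\Bigl\{ a\,E_{\nu_\alpha}[Wf] - \tfrac{N}{1}\,\mc D_N(\sqrt f)\Bigr\},
\]
where $\mc D_N$ is the Dirichlet form of $L_N/N^2$, scaled so that the variational term carries a factor $N^2/N=N$ in front of the Dirichlet form. We then take $a\to\infty$ at the end.

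\emph{First term} ($r_K^h$). Since $|r_K^h|\le C\sum_{y\in\supp h}(1+\eta(y))\mathbf 1\{\eta(y)>K\}$ is not shifted, no spatial averaging helps. We instead use the elementary bound $\int_0^t \eta_s(y)\mathbf 1\{\eta_s(y)>K\}ds$ together with a large-deviation estimate for a single site. Via Feynman--Kac, the supremum of $a E[\eta(0)\mathbf 1\{\eta(0)>K\} f] - N\mc D_N(\sqrt f)$ is controlled as follows. We use the local equilibrium-free bound obtained by comparing $\eta(0)$ with $\eta^{\ell'}(0)$ for $\ell'$ of order $N$ (a one-block estimate in one dimension with Dirichlet cost $N$, giving $a\ell'/N$ error). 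The averaged high-density part is handled by the entropy inequality with respect to $\nu_\alpha$, since $E_{\nu_\alpha}[e^{\gamma\eta(0)}]<\infty$ and the tail $\mathbf 1\{\cdot>K\}$ makes $\frac1{N}\log E[\exp(\gamma\sum_x \eta(x)\mathbf 1\{\eta^{\ell}(x)>K\})]$ vanish as $K\to\infty$.

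\emph{Second term} ($h_K$ versus its spatial average over $\e N$). The function $h_K$ is bounded by $C(K)$. In dimension one, moving a particle over a distance $\e N$ costs Dirichlet energy of order $\e N\cdot$(jump), and the variational problem gives a bound of order $a\,C(K)\sqrt{\e}$ after optimizing (the one-dimensional path argument, using $g(k)\ge c_0k$ from \eqref{2-1} to keep rates bounded below on $\{\eta\le K\}$). This bound vanishes as $\e\to0$.

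\emph{Third term} (one-block). This is the bounded one-block estimate: replace $h_K$ by $H_{\ell,K}$. Since the Dirichlet form multiplier is $N$ while the block size $\ell$ is fixed, the standard argument (cut off large $\eta^\ell$, then use equivalence of ensembles on boxes of size $\ell$) gives an error vanishing as $N\to\infty$ and then $\ell\to\infty$.

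\emph{Fourth term} (two-block). We replace $\eta^\ell(x)$ by $\eta^{\varepsilon N}(x)$ inside $H_K$, averaged over $x$, and then replace $H_K(\eta^{\varepsilon N})$ by $\overline H_{\ell,K}$. Here $\overline H_{\ell,K}$ is Lipschitz and $H_K$ is bounded and Lipschitz on compacts, so the two-block estimate applies with super-exponential control in the regime $N\to\infty$, $\varepsilon\to0$. Large densities are cut off by the same entropy bound as in the first term, using that the $H$'s grow at most linearly.

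\emph{Main obstacle.} The main difficulty is the first term together with the unbounded cutoffs. The observable $r_K^h$ sits at the single site $0$, not averaged, so super-exponential smallness must come from the cost $N$ in one dimension of keeping a macroscopic excess at a point. We plan to handle this by first dominating the single-site variable by its $\e N$-average through the second-term mechanism applied to $\eta(0)\wedge M$, and then controlling the averaged tail through the entropy inequality and exponential moments of $\nu_\alpha$. The limit order $\varepsilon\to0$, then $\e\to0$, $\ell\to\infty$, $K\to\infty$ matches this scheme.
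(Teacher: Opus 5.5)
Your bounded part is essentially sound. You order it differently from the paper: an $\e N$ two-block on $h_K$ first, then an averaged one-block, whereas the paper does the one-block at the origin followed by Propositions~\ref{prop5-4} and~\ref{prop5-5}. Apart from the fourth-term issue noted at the end, that route works.

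The genuine gap is the first term, which is the step the paper singles out as the main difficulty.

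First, the replacement of the unbounded single-site tail by a spatial average does not work as you describe. A spectral-gap one-block estimate on a box of size $\ell'\sim\e N$ costs of order $a\ell'^2/N$, since the gap is of order $\ell'^{-2}$. The Rayleigh estimate also needs a bounded observable. A path or moving-particle bound gives at best something like $\sqrt{\e N\,D_N(f)\int(\text{local mass})\,f\,d\nu_\alpha}$, and the local mass under $f\,d\nu_\alpha$ is not controlled uniformly over densities $f$. Truncating to $\eta(0)\wedge M$ does not help: the remainder $(\eta(0)-M)^+$ is again an unbounded single-site tail, so the argument is circular.

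Lemma~\ref{lem5-4} needs two ingredients absent from your plan:
\begin{itemize}
\item The zero-range change of variables \eqref{5-22}. It works specifically for the observable $g(\eta(0))$, to which $r_K^h$ is reduced through $|r_K^h|\le C\sum_{x\in\supp h}[g(\eta(x))-g_K(\eta(x))]$, and it yields \eqref{5-5}.
\item A penalty $\e T^{-1}\omega(g^{\e N})$, with $\omega$ convex and superlinear, inserted by H\"older's inequality and paid for by the static bound \eqref{5-3}. This absorbs the unbounded mass term and reduces the variational problem to $\e\sup_{y\ge0}\{16a^2y-\omega(y)/T\}\to0$.
\end{itemize}

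Second, once the tail is averaged over $\e N$ sites, a static entropy or exponential-moment bound under $\nu_\alpha$ cannot finish the job in your order ($\e\to0$ before $K\to\infty$). Let $r_K$ denote the single-site tail, e.g.\ $\eta(0)\mathbf{1}\{\eta(0)>K\}$. The exponent is then $aN\int_0^t(\e N)^{-1}\sum_{x=1}^{\e N}r_K(\tau_x\eta_s)\,ds$, which puts weight $a/\e$ on each site. Stationarity plus Jensen give roughly $\e\log E_{\nu_\alpha}[\exp\{(at/\e)r_K\}]$, which diverges as $\e\to0$ for each fixed $K$ because $r_K$ is unbounded. Your full-torus expression with a fixed $\gamma$ is not the relevant quantity.

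The paper closes this step dynamically:
\begin{itemize}
\item The leftover $V_4$ is a functional of the empirical measure.
\item Varadhan's lemma and Proposition~\ref{prop5-6} reduce it to $\sup_\pi\{aJ_{\ell,K}(\pi)-I_\alpha(\pi)\}$.
\item Proposition~\ref{prop5-2} uses the one-dimensional energy bound \eqref{5-21} to show that holding a high density at a fixed macroscopic point is expensive in the dynamical rate function. No static estimate detects this cost.
\end{itemize}
Taking $K\to\infty$ before an auxiliary $\e'\to0$ would let the static bound close. But then the replacement of the single-site tail would have to be uniform in $K$, and nothing in your proposal provides that.

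Finally, your fourth term cannot be split into a two-block comparison of $\eta^\ell(x)$ with $\eta^{\varepsilon N}(x)$ at fixed $\ell$ plus the deterministic error $H_K-\overline H_{\ell,K}$. For fixed $\ell$ the first piece is close to $(\overline H_{\ell,K}-H_K)(\eta^{\varepsilon N}(x))$, not to zero, and $\e\to0$ is taken before $\ell\to\infty$. Treat the whole term at once with the super-exponential replacement lemma, as in Proposition~\ref{prop5-5}.
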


We now outline the strategy for proving Theorem \ref{thm5-3}.
By the super-exponential estimate established in Theorem \ref{thm5-2}, the additive functional $\Gamma_h^N$ admits an exponentially good approximation given by
\begin{align}\label{5-23}
\int_0^T \dfrac{1}{\e N}\sum_{x=1}^{\e N} \overline H_{\ell,K}(\eta_t^{\varepsilon N}(x))dt,
\end{align}
which is a function of the empirical measure.
Therefore, by the large deviation principle for the empirical measure and the contraction principle,
for fixed parameters $\varepsilon,\e,\ell,K$,
\eqref{5-23} satisfies the large deviation principle
with rate function
\begin{align}\label{5-27}
\psi_{\varepsilon, \e, \ell, K}(a):=
\inf \left\{ I_\alpha(\pi): \int_0^T dt \int_\T d\theta \kappa_\e(\theta)
\overline H_{\ell,K}\left(\int_\T \iota_\varepsilon(\theta_1-\theta)\rho(t,\theta_1)d\theta_1\right)=a \right\},
\end{align}
where $\kappa_\e(\theta):=\e^{-1}{\bf 1}_{[0,\e]}(\theta)$ for $\e>0$.
Letting $\varepsilon\to0, \e\to0, \ell\to\infty, K\to\infty$,
we recover the original rate function $\psi$,
yielding the additive-functional large deviation principle.
This approximation procedure, together with the spatial regularity of finite-rate trajectories, also yields the simple variational formula for $\psi$ stated in Theorem \ref{thm5-1}.
The precise proof is given in Section \ref{sec:proof}.

The occupation time at the origin is denoted by $\Gamma^N:=\Gamma_h^N$ with $h(\eta):=\eta(0)$.
For this choice of $h$, we have $H(\rho)=\rho$, and Theorem \ref{thm5-3} gives the following result.

\begin{corollary}\label{thm5-11}
The sequence $\{\Gamma^N\}_{N\in\N}$ satisfies a large deviation principle with good rate function $\psi$ and speed $N$,
where $\psi$ is defined by \eqref{2-3} with $H(\rho)=\rho$. Namely, for every closed set $C$ of $\R_+$,
\begin{align*}
\varlimsup_{N\to\infty} \dfrac1N \log \bb P_\alpha^N\left(\Gamma^N\in C \right) \le -\inf_{a\in C}\psi(a),
\end{align*}
and for every open set $O$ of $\R_+$,
\begin{align*}
\varliminf_{N\to\infty} \dfrac1N \log \bb P_\alpha^N\left(\Gamma^N\in O \right) \ge -\inf_{a\in O}\psi(a).
\end{align*}
\end{corollary}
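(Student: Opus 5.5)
The corollary is the special case $h(\eta):=\eta(0)$ of Theorem \ref{thm5-3}, so the plan is to check that this $h$ meets the hypotheses of that theorem, compute $H$, and transfer the bounds from $\R$ to the state space $\R_+$.

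\emph{Lipschitz and local.} The function $h(\eta)=\eta(0)$ is local with support $A=\{0\}$. It is Lipschitz with constant $C_0(h)=1$, since $|h(\eta)-h(\xi)|=|\eta(0)-\xi(0)|$.

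\emph{Computing $H$.} By the definition of $\nu_\rho$ through $\Phi=R^{-1}$,
\[
H(\rho)=E_{\nu_\rho}[\eta(0)]=R(\Phi(\rho))=\rho .
\]
Hence the functions $\psi_\e$ and $\psi$ in \eqref{2-3} are exactly those obtained with $H(\rho)=\rho$. Theorem \ref{thm5-3} then gives the large deviation principle on $\R$ with good rate function $\psi$.

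\emph{Passing to $\R_+$.} Since $\eta_t(0)\ge 0$, we have $\Gamma^N\ge0$ almost surely.

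For the upper bound, let $C$ be closed in $\R_+$. It is also closed in $\R$, because $\R_+$ is closed. The upper bound then follows directly from Theorem \ref{thm5-3}.

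For the lower bound, let $O$ be open in $\R_+$. Then $O=\tilde O\cap\R_+$ for some set $\tilde O$ open in $\R$. Since $\Gamma^N\ge0$, we have $\bb P_\alpha^N(\Gamma^N\in O)=\bb P_\alpha^N(\Gamma^N\in\tilde O)$. Theorem \ref{thm5-3} bounds this from below by $-\inf_{\tilde O}\psi$. To replace this by $-\inf_O\psi$, it suffices to show $\psi(a)=\infty$ for $a<0$.

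\emph{The step that needs checking: $\psi=\infty$ on $(-\infty,0)$.}
\begin{itemize}
\item If $I_\alpha(\pi)<\infty$, then $\pi\in\Omega_T^\ac$ with density $\rho\ge0$. Indeed, $\pi_t\in\mc M_+(\T)$.
\item For such $\pi$, we get $\int_0^T\lan\pi_t,\iota_\e\ran dt\ge0$.
\item Fix $a<0$ and $\delta<|a|/2$. Every $b$ with $|b-a|\le\delta$ is negative, so no $\pi$ with finite rate satisfies the constraint. Thus $\psi_\e(b)=\infty$ for all such $b$ and all $\e$.
\item Taking the limits in \eqref{2-3} gives $\psi(a)=\infty$.
\end{itemize}
Goodness of $\psi$ restricted to $\R_+$ is inherited from Theorem \ref{thm5-3}.
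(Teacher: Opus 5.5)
Your proposal is correct and follows the paper, which obtains the corollary by specializing Theorem \ref{thm5-3} to the local Lipschitz function $h(\eta)=\eta(0)$, for which $H(\rho)=R(\Phi(\rho))=\rho$. Your check that $\psi=\infty$ on $(-\infty,0)$ is true but not needed for the lower bound, since $O\subset\tilde O$ already gives $\inf_{\tilde O}\psi\le\inf_O\psi$, i.e.\ $-\inf_{\tilde O}\psi\ge-\inf_O\psi$.
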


\section{The rate function $\psi$}\label{sec:ratefunction}

In this section, we study several properties of the rate function $\psi$.
Throughout this section, $h$ denotes the local Lipschitz function fixed in Section 2.3 and $H(\rho):=E_{\nu_\rho}[h]$.

Let $\Omega_T^{I_\alpha}$ be the set of all trajectories $\pi(t,d\theta)=\rho(t,\theta)d\theta$ in $\Omega_T^\ac$ satisfying $I_\alpha(\pi)<\infty$.
For any trajectory $\pi(t,d\theta)=\rho(t,\theta)d\theta$ in $\Omega_T^{I_\alpha}$,
note that $\rho(t,0)$ is well-defined for almost all $t\in[0,T]$.
Indeed, since $\mc E(\pi)$ is finite, $\rho(t,\cdot)$ has a continuous version on $\T$ for almost all $t\in[0,T]$.
We shall use this fact without any further mention in the following argument.

The main result of this section is a variational formula for $\psi$. In what follows,
we always assume that the rate function $I_\alpha(\pi)$ is finite and therefore
the density at the origin $\rho(t,0)$ is well-defined for almost all $t\in[0,T]$.

\begin{theorem}\label{thm5-1}
The rate function $\psi$ defined in \eqref{2-3} can be expressed by the variational formula
\begin{align}\label{5-19}
\psi(a) =  \inf \left\{ I_\alpha(\pi): \pi\in \Omega_T^{I_\alpha},\ \int_0^T H(\rho(t,0)) dt = a\  \right\}, \quad a\in\R.
\end{align}
Moreover, if $\psi(a)<\infty$, the infimum is achieved by some trajectory $\pi\in \Omega_T^{I_\alpha}$
satisfying $\int_0^T H(\rho(t,0)) dt=a$.
\end{theorem}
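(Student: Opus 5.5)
Write $\Psi(a)$ for the right-hand side of \eqref{5-19}. The plan is to prove $\psi\le\Psi$ and $\psi\ge\Psi$ separately, and to get attainment from the second inequality via compactness. Everything rests on one regularity estimate, which I prove first.

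\emph{Key estimate.} For every $M<\infty$,
\begin{align*}
\lim_{\e\to0}\sup\Big\{\int_0^T\big|\lan\pi_t,\iota_\e\ran-\rho(t,0)\big|\,dt:\ I_\alpha(\pi)\le M\Big\}=0 .
\end{align*}
Since $I_{\rm dyn}(\pi)<\infty$ forces $\mc E(\pi)<\infty$, we have $\int_0^T\!\int_\T |\nabla\rho|^2/\rho\,d\theta\,dt\le C\mc E(\pi)$. The first step is to check that $\mc E(\pi)\le C(M)$ on the level set. I expect this from the standard bound $\mc E(\pi)\le C(I_{\rm dyn}(\pi)+1+\sup_t\lan\pi_t,1\ran)$, which follows by choosing $G=\lambda\nabla\log\ldots$ in $\mc J_G$ or by citing the energy bound of \cite{fgh25}. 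Mass is conserved, and $\lan\pi_0,1\ran$ is controlled by $I^0_\alpha(\pi_0)\le M$ through superlinear growth of the entropy density.

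With that in hand, for a.e.\ $t$ I write $\nabla\rho=2\sqrt\rho\,\nabla\sqrt\rho$. Cauchy--Schwarz then gives, for $|\theta|\le\e$,
\begin{align*}
|\rho(t,\theta)-\rho(t,0)|\le \Big(\int_{-\e}^{\e}\rho\Big)^{1/2}\Big(\int_\T\frac{|\nabla\rho|^2}{\rho}\Big)^{1/2}\le \big(2\e\,\|\rho(t)\|_\infty\big)^{1/2}F(t)^{1/2},
\end{align*}
where $F(t):=\int_\T|\nabla\rho|^2/\rho$. The one-dimensional Sobolev bound $\|\rho(t)\|_\infty\le \lan\pi_t,1\ran+\big(\lan\pi_t,1\ran F(t)\big)^{1/2}$ then gives
\begin{align*}
\int_0^T|\lan\pi_t,\iota_\e\ran-\rho(t,0)|\,dt\le C\e^{1/2}\int_0^T\big(1+F(t)\big)\,dt\le C(M)\e^{1/2}.
\end{align*}
Because $H$ is Lipschitz, the same bound holds for $\int_0^T|H(\lan\pi_t,\iota_\e\ran)-H(\rho(t,0))|\,dt$.

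\emph{Upper bound $\psi\le\Psi$.} Take $\pi$ with $I_\alpha(\pi)<\infty$ and $\int_0^T H(\rho(t,0))\,dt=a$. Set $b_\e:=\int_0^T H(\lan\pi_t,\iota_\e\ran)\,dt$. The key estimate gives $b_\e\to a$, so $\psi_\e(b_\e)\le I_\alpha(\pi)$ and $|b_\e-a|\le\delta$ for small $\e$. Hence $\psi(a)\le I_\alpha(\pi)$.

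\emph{Lower bound $\psi\ge\Psi$ and attainment.} This is the main obstacle, because the limits in $\delta$ and $\e$ must be exchanged against the infimum. Assume $\psi(a)=M<\infty$. Then there exist $\delta_n\to0$, $\e_n\to0$ and $\pi^n$ with $I_\alpha(\pi^n)\le M+1/n$ and $|\int_0^T H(\lan\pi^n_t,\iota_{\e_n}\ran)\,dt-a|\le\delta_n$. Since $I_\alpha$ is good, I pass to a subsequence with $\pi^n\to\pi$, and lower semicontinuity gives $I_\alpha(\pi)\le M$.

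It remains to show $\int_0^T H(\rho(t,0))\,dt=a$. By the key estimate, uniformly in $n$, the quantity $\int_0^T H(\lan\pi^n_t,\iota_{\e}\ran)\,dt$ is within $C\e^{1/2}$ of $\int_0^T H(\lan\pi^n_t,\iota_{\e_n}\ran)\,dt$ for $\e\ge\e_n$, since both are close to the point value. For fixed $\e$, I use $\lan\pi^n_t,\iota_\e\ran\to\lan\pi_t,\iota_\e\ran$ for a.e.\ $t$. This needs care: $\iota_\e$ is discontinuous, and Skorokhod convergence only gives convergence at continuity times. I would handle it by replacing $\iota_\e$ with a continuous mollified kernel $\tilde\iota_\e$; the same regularity estimate applies to it. Finite-rate limits are continuous in time, so convergence holds for every $t$. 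Dominated convergence follows from the uniform $L^\infty$ bound on $\int_0^T\|\rho^n(t)\|_\infty\,dt$ and the linear growth of $H$. Sending $n\to\infty$ and then $\e\to0$ yields $\int_0^T H(\rho(t,0))\,dt=a$. Hence $\Psi(a)\le I_\alpha(\pi)\le\psi(a)$, and $\pi$ attains the infimum.
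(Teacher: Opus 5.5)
Your argument is essentially the paper's proof: the key estimate is Lemma~\ref{lem5-6} (applying \eqref{5-21} pointwise in time gives the slightly better rate $\e^{1/2}$ instead of $\e^{1/4}$ and bypasses Lemma~\ref{lem5-8}), the inequality $\psi\le\Psi$ is proved identically, and your compactness step is the paper's lower-bound argument with Lemma~\ref{lem5-7} inlined. Two minor remarks on that step: the mollification of $\iota_\e$ is harmless but unnecessary, since $\pi_t$ is absolutely continuous and the portmanteau theorem applies; and a uniform bound on $\int_0^T\|\rho^n(t)\|_\infty\,dt$ is not a dominating function, whereas for fixed $\e$ the bound $|\lan\pi^n_t,\iota_\e\ran|\le\pi^n_0(\T)/(2\e)$ with bounded masses suffices. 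The one real correction concerns your proposed justification of the energy bound: $\mc E(\pi)\le C(I_{\mathrm{dyn}}(\pi)+1+\sup_t\lan\pi_t,1\ran)$ is false without the initial entropy. For $g(k)=k$, a solution of $\partial_t\rho=\frac12\Delta\rho$ started from a unit-mass profile of huge entropy has $I_{\mathrm{dyn}}=0$ but $\mc E=2\bigl(\int\rho_0\log\rho_0-\int\rho_T\log\rho_T\bigr)$ arbitrarily large. What you need is $\mc E(\pi)\le C_0(I_\alpha(\pi)+1)$, i.e.\ Proposition~\ref{energyest}, which involves $I_\alpha^0(\pi_0)$ and therefore does give $\mc E\le C(M)$ on $\{I_\alpha\le M\}$.
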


Before proving Theorem \ref{thm5-1}, we shall prepare several auxiliary lemmata.
We start by providing a fundamental bound for the energy $\mc E(\pi)$.

\begin{proposition}\label{energyest}
There exists a constant $C_0>0$ such that for any $\pi\in\Omega_T$,
\begin{align*}
\mc E(\pi)\le C_0(I_\alpha(\pi)+1).
\end{align*}
\end{proposition}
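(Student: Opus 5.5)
We may assume $I_\alpha(\pi)<\infty$, since otherwise there is nothing to prove. Then $\pi\in\Omega_T^\ac$ with $\pi_t(d\theta)=\rho(t,\theta)d\theta$, $\mc E(\pi)<\infty$, and $I_{\rm dyn}(\pi)=\sup_G \mc J_G(\pi)$. The plan is the standard one: test $\mc J_G$ against a time-independent function built from the density itself, namely $G=\tfrac12\log\Phi(\rho)$ or a regularized version of it. For smooth $G$ and smooth $\rho$ solving the weak equation, integration by parts gives
\begin{align*}
\frac12\int_0^T\!dt\,\langle \Phi(\rho),\Delta G+(\nabla G)^2\rangle
=-\frac12\int_0^T\!dt\,\langle \nabla\Phi(\rho),\nabla G\rangle+\frac12\int_0^T\!dt\,\langle\Phi(\rho),(\nabla G)^2\rangle .
\end{align*}
We want $\nabla G$ proportional to $\nabla\Phi(\rho)/\Phi(\rho)$ but time independent in the time-derivative term. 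This is better handled through an entropy-type functional $F(\rho)=\int_\T f(\rho)\,d\theta$ with $f'(\rho)=\tfrac12\log\Phi(\rho)$, so that formally
\begin{align*}
\frac{d}{dt}F(\rho_t)=-\frac14\int \frac{|\nabla\Phi(\rho)|^2}{\Phi(\rho)}d\theta+\text{(forcing term)}.
\end{align*}

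The concrete route is to use the variational characterization (see \cite{fgh25,bkl95}) that finite $I_{\rm dyn}$ yields a field $\nabla H$ with $\int_0^T\langle\Phi(\rho),(\nabla H)^2\rangle dt = 2I_{\rm dyn}(\pi)$ and $\partial_t\rho=\tfrac12\Delta\Phi(\rho)-\nabla(\Phi(\rho)\nabla H)$ weakly. Testing this equation with $f'(\rho)=\tfrac12\log(\Phi(\rho)/\Phi(\alpha))$ gives
\begin{align*}
F(\rho_T)-F(\rho_0)=-\frac14\mc E(\pi)+\frac12\int_0^T\!\langle \nabla\Phi(\rho),\nabla H\rangle dt .
\end{align*}
By Cauchy--Schwarz, the last term is at most $\tfrac18\mc E(\pi)+\tfrac12\int\langle\Phi(\rho),(\nabla H)^2\rangle$. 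Hence
\begin{align*}
\mc E(\pi)\le 8\bigl(F(\rho_0)-F(\rho_T)\bigr)+C\,I_{\rm dyn}(\pi).
\end{align*}

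It remains to bound $F$. Take $f(\rho)=\tfrac12\bigl[\rho\log(\Phi(\rho)/\Phi(\alpha))-\log(Z(\Phi(\rho))/Z(\Phi(\alpha)))\bigr]$. This is exactly half the initial entropy density: its derivative is $\tfrac12\log(\Phi(\rho)/\Phi(\alpha))$, using $R(\varphi)=\varphi Z'(\varphi)/Z(\varphi)$. It is nonnegative, being a relative entropy. Therefore $F(\rho_0)=\tfrac12 I^0_\alpha(\pi_0)$, and $-F(\rho_T)\le0$, so $\mc E(\pi)\le C(I_\alpha(\pi)+1)$. The $+1$ absorbs constants produced by the regularization.

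The main obstacle is rigor. The test function $\log\Phi(\rho)$ is neither smooth nor bounded: it diverges where $\rho=0$ and grows at large $\rho$. The time derivative $\partial_t\rho$ is only a distribution, and the existence of $\nabla H$ needs justifying. I would handle this by working with truncated and regularized $f_\delta'(\rho)=\tfrac12\log\bigl((\Phi(\rho)+\delta)/(\Phi(\alpha)+\delta)\bigr)$, mollifying $\rho$ in space and time, and applying the weak formulation $\mc J_G\le I_{\rm dyn}$ with $G$ equal to $\lambda f'_\delta(\rho^{\rm moll})$ for a small constant $\lambda$. The error terms are controlled using \eqref{boundPhi'} and the a priori finiteness of $\mc E(\pi)$. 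Only after that do we let the mollification vanish and then $\delta\to0$, using Fatou's lemma for the energy and monotone convergence for the entropy.

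Alternatively, the same estimate can be obtained microscopically, as in \cite[Chapter 5]{kl99}. There, exponential martingales combined with the Feynman--Kac bound give $\bb Q^N_\alpha(\mc E_G>\,\cdot\,)$ super-exponential control, and the LDP lower bound then transfers this to the inequality. I would fall back on this if the PDE regularization becomes too delicate.
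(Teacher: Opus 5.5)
Your proposal is correct and takes the same route as the paper: you use the control representation of $I_{\rm dyn}$, with $\int_0^T\langle\Phi(\rho),(\nabla H)^2\rangle\,dt=2I_{\rm dyn}(\pi)$, and combine it with the entropy dissipation identity obtained by testing against $\frac12\log(\Phi(\rho)/\Phi(\alpha))$, where $F\ge0$ and $F(\rho_0)=\frac12 I_\alpha^0(\pi_0)$. The only difference is how the chain rule for $\int f(\rho_t)\,d\theta$ is justified: the paper cites the equivalence of weak and renormalized kinetic solutions in \cite[Proposition 6.4 and Theorem 7.3]{fgh25}, while you sketch a direct truncation-and-mollification argument.
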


\begin{proof}
It suffices to consider $\pi$ with $I_\alpha(\pi)<\infty$.
The verification of the hypotheses on $\Phi$ and the adaptation
to the periodic setting are as discussed after Theorem 2.1.
Combining the control representation of the dynamical rate function,
as in \cite[Proposition 37]{fg23}, with the entropy dissipation estimate
and the equivalence of weak and renormalized kinetic solutions in
\cite[Proposition 6.4 and Theorem 7.3]{fgh25}, we obtain
\begin{align*}
\int_0^T dt\int_{\mathbb T} d\theta
\left|\nabla\sqrt{\Phi(\rho(t,\theta))}\right|^2
\le C\bigl(I_\alpha(\pi)+1\bigr).
\end{align*}
Here the relative entropy with reference density $\alpha$ coincides
with $I_\alpha^0$, and the factor $1/2$ in our diffusion coefficient
only changes the constant.
The conclusion follows from the fact that
\begin{align*}
\mathcal E(\pi)
=4\int_0^T dt\int_{\mathbb T} d\theta
\left|\nabla\sqrt{\Phi(\rho(t,\theta))}\right|^2.
\end{align*}

\end{proof}

We next complement the energy estimate with a bound on the total mass in terms of the rate function.

\begin{proposition}\label{prop5-7}
There exists a constant $C_0>0$ such that for any $m\in\mc M_+(\T)$,
\begin{align*}
m (\T) \le C_0(I_\alpha^0(m)+1).
\end{align*}
In particular, there exists a constant $C_0>0$ such that for any $\pi\in\Omega_T$,
\begin{align*}
\pi_0(\T) + \mc E(\pi)\le C_0(I_\alpha(\pi)+1).
\end{align*}
\end{proposition}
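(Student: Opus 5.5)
The plan is to bound the integrand of $I_\alpha^0$ from below by a linear function of $\rho$. If $m$ is not absolutely continuous, then $I_\alpha^0(m)=\infty$ and there is nothing to prove. So write $m(d\theta)=\rho(\theta)d\theta$ and set
\begin{align*}
f(\rho):=\rho\log\frac{\Phi(\rho)}{\Phi(\alpha)}-\log\frac{Z(\Phi(\rho))}{Z(\Phi(\alpha))}, \quad \rho\in\R_+,
\end{align*}
so that $I_\alpha^0(m)=\int_\T f(\rho(\theta))d\theta$. It suffices to find $c,C>0$ with $f(\rho)\ge c\rho-C$ for all $\rho\ge0$. Integrating over $\T$, which has unit length, then gives $m(\T)\le c^{-1}(I_\alpha^0(m)+C)$.

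For the pointwise bound I use the variational (Legendre) structure of $f$. Since $Z(\varphi)=E_{\tilde\nu_{\Phi(\alpha)}}\bigl[(\varphi/\Phi(\alpha))^{\eta(0)}\bigr]\,Z(\Phi(\alpha))$, we have
\begin{align*}
\log\frac{Z(\varphi)}{Z(\Phi(\alpha))}=\Lambda(\lambda):=\log E_{\nu_\alpha}\bigl[e^{\lambda\eta(0)}\bigr], \quad \lambda=\log\frac{\varphi}{\Phi(\alpha)}.
\end{align*}
With $\lambda_\rho:=\log(\Phi(\rho)/\Phi(\alpha))$ this gives $f(\rho)=\lambda_\rho\rho-\Lambda(\lambda_\rho)$. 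Because $\Lambda'(\lambda_\rho)=R(\Phi(\rho))=\rho$, this is the Legendre transform $\Lambda^*(\rho)=\sup_\lambda\{\lambda\rho-\Lambda(\lambda)\}$. Taking $\lambda=1$ yields $f(\rho)\ge\rho-\Lambda(1)$. Here $\Lambda(1)=\log\bigl(Z(e\Phi(\alpha))/Z(\Phi(\alpha))\bigr)<\infty$, since $Z$ has infinite radius of convergence. So $c=1$ and $C=\Lambda(1)$ work. Alternatively, one can check convexity of $f$ directly from \eqref{boundPhi'}, but the Legendre identity is cleaner. The only point needing care is $\rho=0$: there $\Phi(0)=0$ and $\lambda_0=-\infty$, so $f(0)=-\log(1/Z(\Phi(\alpha)))=\log Z(\Phi(\alpha))$ by the convention $0\log0=0$. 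This is consistent with the bound $f(0)\ge -\Lambda(1)$, because $\Lambda(1)\ge\Lambda(0)=0>-\log Z(\Phi(\alpha))$... in any case $\log Z(\Phi(\alpha))\ge0\ge-\Lambda(1)$.

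For the second claim, note that $I_\alpha^0(\pi_0)\le I_\alpha(\pi)$, because $I_{\rm dyn}\ge0$; indeed $\mc J_0=0$ when we take $G=0$. Hence $\pi_0(\T)\le C_0(I_\alpha(\pi)+1)$. Adding the bound $\mc E(\pi)\le C_0(I_\alpha(\pi)+1)$ from Proposition \ref{energyest} and enlarging the constant gives the result. When $I_\alpha(\pi)=\infty$ the inequality is trivial.

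The main step, though a mild one, is identifying $f$ with the Legendre transform $\Lambda^*$. Once this is done the lower bound comes from a single test value $\lambda=1$.
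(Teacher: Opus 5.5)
Your proof is correct, but it takes a different route from the paper's. The paper asserts convexity of $I_\alpha^0$ and applies Jensen's inequality on the unit-length torus to reduce to the constant profile $\rho\equiv m(\T)$. It then uses the superlinear growth $f(M)\ge c_0M\log M-C_0(M+1)$ of the integrand, which implicitly rests on \eqref{boundPhi} together with an upper bound on $\log Z$ (for instance $Z(\varphi)\le e^{\varphi/c_0}$, from \eqref{2-1}).

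You instead identify $f=\Lambda^*$ and test the single tilt $\lambda=1$, which gives the pointwise affine bound $f(\rho)\ge\rho-\Lambda(1)$. This uses only that $Z$ is entire, needs neither convexity nor growth asymptotics, and is all the first assertion requires.

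What the paper's route buys is the stronger estimate \eqref{5-28}, which is reused in the proof of Proposition \ref{prop5-2}. There the bracket has to be made nonpositive for large $\pi_0(\T)$ against a linear term whose slope grows with $a$, and a lower bound with fixed slope $1$ would not suffice. Your framework recovers this at no extra cost if you keep $\lambda$ free: $I_\alpha^0(m)\ge\sup_\lambda\{\lambda m(\T)-\Lambda(\lambda)\}=\Lambda^*(m(\T))=f(m(\T))$. This is exactly the paper's Jensen step, obtained by duality, and choosing $\lambda$ large supplies any desired slope.

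One small point: your remark about $\rho=0$ gets tangled midway, though its conclusion is right. More cleanly, $\Lambda$ is nondecreasing with $\Lambda(\lambda)\to-\log Z(\Phi(\alpha))$ as $\lambda\to-\infty$. Hence $\Lambda^*(0)=\log Z(\Phi(\alpha))=f(0)$, and the identity $f=\Lambda^*$ holds on all of $\R_+$.
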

\begin{proof}
Since $I_\alpha^0$ is convex, for any $m\in\mc M_+(\T)$ with $m(\T)<\infty$, we have
\begin{align}\label{5-28}
I_\alpha^0(m) &\ge  m(\T)\log\left( \dfrac{\Phi(m(\T))}{\Phi(\alpha)}\right) - \log\left( \dfrac{Z(\Phi(m(\T)))}{Z(\Phi(\alpha))}\right) \notag\\
 &\ge c_0m(\T)\log m(\T) - C_0\left( m(\T) + 1 \right) ,
\end{align}
for some constants $c_0, C_0>0$, which are independent of $m$.
This shows the first part of the proposition.
The second part follows from the first part, Proposition \ref{energyest} and the definition of $I_\alpha$.
\end{proof}

Combining the preceding energy and mass estimates,
we obtain an $L^2$-bound for trajectories with finite rate functions.

\begin{lemma}\label{lem5-8}
There exists a constant $C_0>0$ such that 
for any trajectory $\pi(t,d\theta)=\rho(t,\theta)d\theta$
in $\Omega_T^{I_\alpha}$, we have
\begin{align*}
\int_0^T \int_{\T} \rho(t,\theta)^2 dt d\theta \le
C_0\left( I_\alpha(\pi)^2+1 \right).
\end{align*}
\end{lemma}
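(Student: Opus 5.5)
The plan is to combine the one-dimensional Sobolev-type bound $\sup_\theta \rho \le \int\rho + \int|\nabla\rho|$ with conservation of mass and the two estimates of Proposition \ref{prop5-7}. Throughout, write $M_t := \pi_t(\T) = \int_\T \rho(t,\theta)\,d\theta$.

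\emph{Step 1: conservation of mass.} I will show that $M_t = M_0 = \pi_0(\T)$ for all $t\in[0,T]$ whenever $I_\alpha(\pi)<\infty$. Take $G(t,\theta) := \lambda\varphi(t)$ with $\varphi\in C^1([0,T])$ and $\lambda\in\R$. Then $\nabla G = \Delta G = 0$, so
\begin{align*}
\mc J_G(\pi) = \lambda\Bigl[\varphi(T)M_T - \varphi(0)M_0 - \int_0^T \varphi'(t) M_t\,dt\Bigr].
\end{align*}
This is linear in $\lambda$, while $\sup_G \mc J_G(\pi) \le I_{\rm dyn}(\pi) < \infty$. Hence the bracket vanishes for every $\varphi$. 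This gives $M_t = M_0$ for almost every $t$, and the càdlàg property of $t\mapsto\pi_t$ (in the weak topology, tested against the constant function $1$) upgrades this to every $t$. Set $M := M_0$. By Proposition \ref{prop5-7}, $M + \mc E(\pi) \le C(I_\alpha(\pi)+1)$.

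\emph{Step 2: pointwise bound at fixed time.} For almost every $t$, $\rho(t,\cdot)$ is absolutely continuous on $\T$ with $\int_\T |\nabla\rho(t,\theta)|^2/\rho(t,\theta)\,d\theta =: e(t) < \infty$. By the one-dimensional fundamental theorem of calculus, for all $\theta$,
\begin{align*}
\rho(t,\theta) \le \int_\T \rho(t,\theta_1)\,d\theta_1 + \int_\T |\nabla\rho(t,\theta_1)|\,d\theta_1 .
\end{align*}
Applying Cauchy--Schwarz with the weight $\rho$ to the second integral gives
\begin{align*}
\int_\T |\nabla\rho(t,\theta_1)|\,d\theta_1 \le e(t)^{1/2}M^{1/2}.
\end{align*}
Therefore
\begin{align*}
\int_\T \rho(t,\theta)^2\,d\theta \le M \sup_\theta \rho(t,\theta) \le M^2 + M^{3/2} e(t)^{1/2}.
\end{align*}

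\emph{Step 3: time integration.} Integrate over $[0,T]$ and use Cauchy--Schwarz in $t$. Recall from Section \ref{sec:model} that $\int_0^T e(t)\,dt \le C\mc E(\pi)$. This yields
\begin{align*}
\int_0^T\!\!\int_\T \rho^2 \le TM^2 + M^{3/2}\sqrt{T}\Bigl(\int_0^T e(t)\,dt\Bigr)^{1/2} \le TM^2 + CM^{3/2}\mc E(\pi)^{1/2}.
\end{align*}
Inserting $M,\mc E(\pi) \le C(I_\alpha(\pi)+1)$, both terms are bounded by $C(I_\alpha(\pi)+1)^2 \le C_0(I_\alpha(\pi)^2+1)$.

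\emph{Main obstacle.} I expect Step 1 to be the delicate point. It must be checked that the constant-in-space test functions $G = \lambda\varphi(t)$ are admissible in $C^{1,2}$, which they are, and that the weak topology lets us test against $1$ on the compact torus. Everything else is routine. A secondary care point in Step 2 is the convention $0/0=0$ where $\rho$ vanishes: there $\nabla\rho = 0$ almost everywhere on $\{\rho=0\}$, so the weighted Cauchy--Schwarz step is harmless.
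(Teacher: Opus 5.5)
Your proposal is correct and follows essentially the paper's route. Like the paper, you derive the pointwise bound $\rho(t,\theta)\le \pi_0(\T)+\pi_0(\T)^{1/2}e(t)^{1/2}$ from the fundamental theorem of calculus and the weighted Schwarz inequality, then conclude with Propositions~\ref{energyest} and~\ref{prop5-7}. The differences are minor: you justify conservation of mass explicitly with spatially constant test functions, whereas the paper uses $\int_\T\rho(t,\theta)\,d\theta=\pi_0(\T)$ tacitly, and you bound $\int_\T\rho^2\le M\sup_\theta\rho$ with Cauchy--Schwarz in time instead of squaring the pointwise bound.
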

\begin{proof}
Fix a trajectory $\pi(t,d\theta)=\rho(t,\theta)d\theta$ in $\Omega_T^{I_\alpha}$.
We begin with
\begin{align*}
\left| \rho(t,\theta)-\rho(t,\theta_1) \right|&\le\int_{\T}|\nabla\rho(t,\theta_2)|d\theta_2 \\
&\le \left( \int_{\T} \rho(t,\theta_2)d\theta_2\right)^{1/2}\left( \int_{\T} \dfrac{(\nabla\rho(t,\theta_2))^2}{\rho(t,\theta_2)}d\theta_2\right)^{1/2}\\
&= \pi_0(\T)^{1/2}\left( \int_{\T} \dfrac{(\nabla\rho(t,\theta_2))^2}{\rho(t,\theta_2)}d\theta_2\right)^{1/2},
\end{align*}
for any $\theta, \theta_1\in\T$.
Then integrating in $\theta_1$, we have
\begin{align}\label{5-21}
\rho(t,\theta)\le \pi_0(\T)^{1/2}\left( \int_{\T} \dfrac{(\nabla\rho(t,\theta_1))^2}{\rho(t,\theta_1)}d\theta_1\right)^{1/2}
+ \pi_0(\T).
\end{align}
Using the elementary inequality $2ab\le A^{-1}a^2+Ab^2$ and integrating \eqref{5-21} in $t$ and $\theta$, we have
\begin{align*}
\int_0^T \int_{\T} \rho(t,\theta)^2 dt d\theta
\le 2 \pi_0(\T) \left( \int_0^Tdt \int_{\T} \dfrac{(\nabla\rho(t,\theta))^2}{\rho(t,\theta)}d\theta\right) + 2T \pi_0(\T)^2.
\end{align*}
Then the conclusion follows from Propositions \ref{energyest} and \ref{prop5-7}.
\end{proof}

We now use the preceding bounds to control the error
in approximating the density at a fixed point by spatial averages.

\begin{lemma}\label{lem5-6}
Fix a Lipschitz continuous function $F:\R_+\to\R$.
There exists a constant $C_0>0$ such that,
for any $0<\e<1/2$ and trajectory $\pi(t,d\theta)=\rho(t,\theta)d\theta$ in $\Omega_T^{I_\alpha}$,
\begin{align*}
\int_0^T  \left| F\left( \rho(t,0) \right) -  F\left( \lan \pi_t, \iota_\e \ran \right) \right| dt
 \le  C_0\e^{1/4}\left( I_\alpha(\pi) +1\right).
\end{align*}
\end{lemma}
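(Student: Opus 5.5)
By the Lipschitz continuity of $F$, with constant $L$ say, it suffices to bound
\begin{align*}
\int_0^T \left| \rho(t,0) - \lan \pi_t,\iota_\e\ran \right| dt
\le \int_0^T dt\, \dfrac{1}{2\e}\int_{-\e}^{\e} |\rho(t,0)-\rho(t,\theta)|\, d\theta .
\end{align*}
Here we use that $\rho(t,\cdot)$ has a continuous version for almost every $t$, since $\mc E(\pi)<\infty$. For such $t$ and $|\theta|\le\e$, apply Cauchy--Schwarz on the interval between $0$ and $\theta$, as in the proof of Lemma \ref{lem5-8}:
\begin{align*}
|\rho(t,0)-\rho(t,\theta)| \le \int_{-\e}^{\e} |\nabla\rho(t,\theta_2)|\,d\theta_2
\le \Bigl(\int_{-\e}^{\e}\rho(t,\theta_2)\,d\theta_2\Bigr)^{1/2} D(t)^{1/2},
\end{align*}
where $D(t):=\int_\T (\nabla\rho(t,\theta_2))^2/\rho(t,\theta_2)\,d\theta_2$. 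The gain in $\e$ comes from the local mass factor. We bound it by H\"older, $\int_{-\e}^{\e}\rho\, d\theta_2 \le (2\e)^{1/2}\|\rho(t,\cdot)\|_{L^2(\T)}$. This gives the pointwise-in-time estimate
\begin{align*}
|\rho(t,0)-\lan\pi_t,\iota_\e\ran| \le C\e^{1/4}\,\|\rho(t,\cdot)\|_{L^2(\T)}^{1/2} D(t)^{1/2}.
\end{align*}

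Next integrate in $t$ and apply Cauchy--Schwarz in time:
\begin{align*}
\int_0^T \|\rho(t,\cdot)\|_{L^2}^{1/2} D(t)^{1/2} dt \le \Bigl(\int_0^T \|\rho(t,\cdot)\|_{L^2}\,dt\Bigr)^{1/2}\Bigl(\int_0^T D(t)\,dt\Bigr)^{1/2}.
\end{align*}
The second factor is at most $(C\mc E(\pi))^{1/2}\le C(I_\alpha(\pi)+1)^{1/2}$ by Proposition \ref{energyest}. For the first factor, $\int_0^T\|\rho(t,\cdot)\|_{L^2}dt\le T^{1/2}(\int_0^T\int_\T\rho^2)^{1/2}\le C(I_\alpha(\pi)+1)$ by Lemma \ref{lem5-8}, since $(I^2+1)^{1/2}\le I+1$. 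Its square root is therefore $C(I_\alpha(\pi)+1)^{1/2}$. Multiplying the two factors gives the bound $C_0\e^{1/4}(I_\alpha(\pi)+1)$, with $C_0$ depending only on $L$, $T$ and the constants from the earlier results.

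The main obstacle is getting the exponent of $I_\alpha$ to be exactly one. A cruder route, bounding the local mass by the total mass $\pi_0(\T)$, gains no power of $\e$. Using the sup-norm bound \eqref{5-21} instead introduces extra powers of $I_\alpha$. The H\"older step combined with Lemma \ref{lem5-8} is designed to balance these, and I would check the exponents carefully there. A minor technical point is justifying the fundamental theorem of calculus for $\rho(t,\cdot)$ at almost every $t$. This follows because $\nabla\rho(t,\cdot)\in L^1(\T)$ for almost every $t$, which is a consequence of finite energy.
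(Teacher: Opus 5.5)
Your proof is correct and is essentially the paper's argument. Both use the fundamental theorem of calculus on $[-\e,\e]$, the splitting $|\nabla\rho|=\sqrt{\rho}\cdot|\nabla\rho|/\sqrt{\rho}$, a Schwarz bound on the local mass $\int_{-\e}^{\e}\rho$ combined with Lemma \ref{lem5-8} to extract $\e^{1/2}$, and the energy estimate. The only difference is cosmetic: you apply Cauchy--Schwarz in space and then in time to get a product bound, whereas the paper applies $2ab\le A^{-1}a^2+Ab^2$ with $A=\e^{1/4}$ to get a sum. Both routes give $C_0\e^{1/4}(I_\alpha(\pi)+1)$.
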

\begin{proof}
The left-hand side of the statement can be bounded as
\begin{align*}
\|F\|_\Lip \int_0^T dt \left| \int_\T d\theta \iota_\e(\theta)\left[ \rho(t,0) - \rho(t,\theta)\right] \right|
\le \|F\|_\Lip \int_0^T dt \int_\T d\theta  \iota_\e(\theta)\int_{-|\theta|}^{|\theta|} \left| \nabla\rho(t,\theta_1) \right| d\theta_1,
\end{align*}
where $\|F\|_\Lip$ denotes the Lipschitz constant of $F$.
By the elementary inequality $2ab\le A^{-1}a^2+Ab^2$ and the Schwarz inequality, we have
\begin{align*}
\int_{-|\theta|}^{|\theta|} \left| \nabla\rho(t,\theta_1) \right| d\theta_1
&\le \dfrac{1}{2\e^{1/4}}\int_{-|\theta|}^{|\theta|}\rho(t,\theta_1)d\theta_1 +\dfrac{\e^{1/4}}{2} \int_{\T} \dfrac{(\nabla\rho(t,\theta_1))^2}{\rho(t,\theta_1)}d\theta_1.
\end{align*}
Note that $\iota_\e(\theta)$ vanishes for $|\theta|>\e$.
Therefore we have
\begin{align*}
&\int_0^T \left| F\left(\rho(t,0)\right) - F\left(\lan \pi_t, \iota_\e \ran \right)  \right| dt\\
&\quad\le \dfrac{\|F\|_\Lip}{2\e^{1/4}} \int_0^T \int_{-\e}^\e \rho(t,\theta) dtd\theta
+\dfrac{\|F\|_\Lip\e^{1/4}}{2}\int_0^T \int_{\T} \dfrac{(\nabla\rho(t,\theta))^2}{\rho(t,\theta)}dtd\theta\\
&\quad\le \dfrac{\|F\|_\Lip T^{1/2}\e^{1/4}}{2^{1/2}} \left( \int_0^T \int_{\T} \rho(t,\theta)^2 dt d\theta \right)^{1/2}
+\dfrac{\|F\|_\Lip\e^{1/4}}{2}\int_0^T \int_{\T} \dfrac{(\nabla\rho(t,\theta))^2}{\rho(t,\theta)}dtd\theta,
\end{align*}
where we used the Schwarz inequality to obtain the second inequality.
Then the conclusion follows from Proposition \ref{energyest} and Lemma \ref{lem5-8}.
\end{proof}

As a consequence of the previous lemma,
in the following lemma we show that,
for each Lipschitz continuous function $F:\mathbb{R}_+\to\mathbb{R}$, the map
$\pi\mapsto \int_0^T F(\rho(t,0)) dt$ is continuous
along convergent sequences of trajectories with uniformly bounded rate.

\begin{lemma}\label{lem5-7}
Fix a Lipschitz continuous function $F:\R_+\to\R$.
Let $\{\pi^n(t,d\theta)=\rho^n(t,\theta)d\theta\}_{n\in\N}$ be a sequence of trajectories in $\Omega_T^{I_\alpha}$
converging to some $\pi(t,d\theta)=\rho(t,\theta)d\theta$ in $\Omega_T^{I_\alpha}$ as $n\to\infty$.
Assume that the sequence $\{ I_\alpha(\pi^n)\}_{n\in\N}$ is bounded. Then,
\begin{align*}
\lim_{n\to\infty} \int_0^T F\left( \rho^n(t,0) \right)dt  =  \int_0^T F\left( \rho(t,0) \right) dt .
\end{align*}
\end{lemma}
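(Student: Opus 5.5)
Fix $0<\e<1/2$ and let $M:=\sup_n I_\alpha(\pi^n)<\infty$. Since $\pi\in\Omega_T^{I_\alpha}$, the rate $I_\alpha(\pi)$ is finite as well. We split the difference with the triangle inequality:
\begin{align*}
\left|\int_0^T F(\rho^n(t,0))dt-\int_0^T F(\rho(t,0))dt\right|
&\le \int_0^T\left|F(\rho^n(t,0))-F(\lan\pi^n_t,\iota_\e\ran)\right|dt
+\left|\int_0^T \left[F(\lan\pi^n_t,\iota_\e\ran)-F(\lan\pi_t,\iota_\e\ran)\right]dt\right|\\
&\quad+\int_0^T\left|F(\lan\pi_t,\iota_\e\ran)-F(\rho(t,0))\right|dt .
\end{align*}
By Lemma \ref{lem5-6}, the first term is at most $C_0\e^{1/4}(M+1)$ and the third is at most $C_0\e^{1/4}(I_\alpha(\pi)+1)$. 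Both bounds are uniform in $n$. It therefore suffices to show that, for each fixed $\e$, the middle term tends to $0$ as $n\to\infty$. Taking $\varlimsup_{n\to\infty}$ and then $\e\to0$ then finishes the proof.

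For the middle term we use convergence in the Skorokhod topology of $\Omega_T=D([0,T],\mc M_+(\T))$. It gives $\pi^n_t\to\pi_t$ weakly for every continuity point $t$ of $\pi$, hence for all but countably many $t$, so for almost every $t$. This needs the usual fact that Skorokhod convergence implies convergence at continuity points of the limit. In fact, finite rate trajectories are continuous in time, so this holds for every $t$.

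The obstacle is that $\iota_\e$ is an indicator, so weak convergence alone does not give $\lan\pi^n_t,\iota_\e\ran\to\lan\pi_t,\iota_\e\ran$. However, $\pi_t$ is absolutely continuous, so $\pi_t(\{\pm\e\})=0$. The boundary of $[-\e,\e]$ is therefore $\pi_t$-null, and the portmanteau theorem yields $\lan\pi^n_t,\iota_\e\ran\to\lan\pi_t,\iota_\e\ran$ for almost every $t$. Continuity of $F$ then gives pointwise a.e. convergence of the integrands.

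To pass the limit inside the time integral we need domination, because $F$ may be unbounded. Since $F$ is Lipschitz, $|F(\lan\pi^n_t,\iota_\e\ran)|\le |F(0)|+\|F\|_\Lip(2\e)^{-1}\pi^n_t(\T)$. Mass is conserved along finite rate trajectories: take $G\equiv c$ in $\mc J_G$, which gives $\pi^n_t(\T)=\pi^n_0(\T)$. By Proposition \ref{prop5-7}, $\pi^n_0(\T)\le C_0(M+1)$. The integrands are therefore bounded uniformly in $n$ and $t$, and dominated convergence on $[0,T]$ shows that the middle term vanishes as $n\to\infty$.

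The main care point is this middle step: combining pointwise-in-time weak convergence with the discontinuous test function $\iota_\e$, and supplying a uniform bound from mass conservation together with Proposition \ref{prop5-7}. The $\e$-approximation errors are handled entirely by Lemma \ref{lem5-6}.
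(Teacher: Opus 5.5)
Your proof is correct and follows the paper's argument. Lemma \ref{lem5-6} handles both $\e$-smoothing errors uniformly in $n$, and the middle term vanishes for fixed $\e$ by convergence in $\Omega_T$; the paper asserts this in one line, and you justify it through convergence at continuity points, the portmanteau theorem with $\pi_t(\{\pm\e\})=0$, and dominated convergence. The only imprecision is that a constant $G$ gives just $\pi^n_T(\T)=\pi^n_0(\T)$, so take $G(t,\theta)=c\,\phi(t)$ with $\phi$ smooth to get conservation at every $t$ (alternatively, Skorokhod convergence together with weak continuity of $m\mapsto m(\T)$ already gives $\sup_n\sup_t\pi^n_t(\T)<\infty$).
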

\begin{proof}
Fix $\e>0$. Since the sequence $\{ I_\alpha(\pi^n)\}_{n\in\N}$ is bounded, Lemma \ref{lem5-6} gives
\begin{align*}
\left| \int_0^T F\left( \rho^n(t,0) \right) dt - \int_0^T F\left( \rho(t,0) \right) dt \right|
 \le  \left| \int_0^T F\left( \lan\pi_t^n, \iota_\e \ran \right) dt - \int_0^T F\left(  \lan\pi_t, \iota_\e \ran \right) dt \right|
+ C_0\e^{1/4} ,
\end{align*}
for some constant $C_0>0$.
For each $\e>0$, since $\pi^n$ converges to $\pi$ in $\Omega_T$,
the first term on the right-hand side of the preceding inequality vanishes as $n\to\infty$.
Therefore, by first letting $n\to\infty$ and then $\e\to0$, we obtain the conclusion.
\end{proof}

We are now in the position to prove Theorem \ref{thm5-1}.

\begin{proof}[Proof of Theorem \ref{thm5-1}]
Let
\begin{align*}
\widehat\psi(a):=\inf\left\{I_\alpha(\pi):\pi\in\Omega_T^{I_\alpha},\ \int_0^T H(\rho(t,0))dt=a\right\}.
\end{align*}
Fix a trajectory $\pi(t,d\theta)=\rho(t,\theta)d\theta\in\Omega_T^{I_\alpha}$ satisfying
$\int_0^T H(\rho(t,0))dt=a$.
By Lemma \ref{lem5-6} with $F=H$,
\begin{align*}
\lim_{\e\to0}\int_0^T H\left(\lan\pi_t,\iota_\e\ran\right)dt=a.
\end{align*}
Therefore, by the definition \eqref{2-3}, $\psi(a)\le I_\alpha(\pi)$, and hence $\psi(a)\le\widehat\psi(a)$.

Suppose now that $\psi(a)<\infty$.
By the definition \eqref{2-3}, there exist sequences $\delta_n\downarrow0$, $\e_n\downarrow0$, $a_n\to a$ and
$\pi^n(t,d\theta)=\rho^n(t,\theta)d\theta\in\Omega_T^{I_\alpha}$ such that
\begin{align*}
\int_0^T H\left(\lan\pi_t^n,\iota_{\e_n}\ran\right)dt=a_n,
\quad
\lim_{n\to\infty} I_\alpha(\pi^n)=\psi(a).
\end{align*}
Since $I_\alpha$ is good, taking a subsequence if necessary, we can find a trajectory
$\pi(t,d\theta)=\rho(t,\theta)d\theta\in\Omega_T^{I_\alpha}$ such that $\pi^n$ converges to $\pi$ in $\Omega_T$ and
\begin{align*}
I_\alpha(\pi)\le\varliminf_{n\to\infty}I_\alpha(\pi^n)=\psi(a).
\end{align*}
By Lemmata \ref{lem5-6} and \ref{lem5-7} with $F=H$,
\begin{align*}
\lim_{n\to\infty}\int_0^T H(\rho^n(t,0))dt
=\int_0^T H(\rho(t,0))dt,
\end{align*}
while
\begin{align*}
\left|\int_0^T H(\rho^n(t,0))dt-a_n\right|
\le C_0\e_n^{1/4}(I_\alpha(\pi^n)+1)\longrightarrow0.
\end{align*}
Thus $\int_0^T H(\rho(t,0))dt=a$, and therefore $\widehat\psi(a)\le I_\alpha(\pi)\le\psi(a)$.
This proves \eqref{5-19} when $\psi(a)<\infty$ and also proves the second assertion.
If $\psi(a)=\infty$, the inequality $\psi(a)\le\widehat\psi(a)$ proved above implies $\widehat\psi(a)=\infty$.
\end{proof}

As a consequence of Theorem \ref{thm5-1},
we obtain several properties of the rate function for the occupation time.

\begin{corollary}\label{cor5-1}
Let $h(\eta):=\eta(0)$, so that $H(\rho)=\rho$.
Then $\psi(a)<\infty$ for every $a\in\R_+$, and $\psi(a)$ vanishes if and only if $a=\alpha T$.
If, in addition, $g(k)=k$ for every $k\in\N$, then the rate function $\psi$ is convex and continuous on $\R_+$.
\end{corollary}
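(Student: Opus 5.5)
We prove the three assertions of Corollary \ref{cor5-1} separately, all via the variational formula \eqref{5-19} of Theorem \ref{thm5-1} with $H(\rho)=\rho$.

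\emph{Finiteness.} For $a\in\R_+$ we construct an explicit trajectory with $\int_0^T\rho(t,0)\,dt=a$ and $I_\alpha(\pi)<\infty$. Take a smooth, strictly positive profile $\rho(t,\theta)$ with $\rho(0,\cdot)\equiv\alpha$ and $\rho(t,0)=a/T$ for $t\ge t_0$. On $[0,t_0]$ we interpolate smoothly and positively, and adjust the value at the origin slightly so that the total integral is exactly $a$. When $a=0$, positivity fails at the origin; there we use a profile such as $\rho(t,\theta)=c(t)\,\theta^2+\text{positive elsewhere}$, arranged so that $|\nabla\Phi(\rho)|^2/\Phi(\rho)$ stays integrable, since $|\nabla\rho|^2/\rho\sim\theta^2/\theta^2$ is bounded.

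For smooth positive $\rho$ with finite energy, $I_{\rm dyn}(\pi)$ equals the standard quadratic form $\frac12\int\!\!\int \Phi(\rho)|\nabla G|^2$, where $G$ solves $\partial_t\rho=\frac12\Delta\Phi(\rho)-\nabla(\Phi(\rho)\nabla G)$. This is an elliptic equation in $\theta$ with a bounded positive coefficient, solvable on the torus because $\partial_t\rho-\frac12\Delta\Phi(\rho)$ has zero spatial mean. We therefore choose $\rho$ mass-preserving, compensating away from the origin. Since $\rho(0)=\alpha$, we also get $I^0_\alpha=0$. In the degenerate case $a=0$ the bound is the delicate point, but $\Phi(\rho)\ge c_0\rho$ and the flux can be chosen to vanish near $\theta=0$, which keeps the form finite.

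\emph{Zero set.} The hydrodynamic solution started from $\alpha$ is $\rho\equiv\alpha$, with $I_\alpha=0$, so $\psi(\alpha T)=0$. Conversely, if $\psi(a)=0$, the infimum is attained by Theorem \ref{thm5-1} at some $\pi$ with $I_\alpha(\pi)=0$. Then $I^0_\alpha(\pi_0)=0$ forces $\rho_0\equiv\alpha$, and $I_{\rm dyn}(\pi)=0$ forces $\pi$ to be a weak solution of the hydrodynamic equation with finite energy. By uniqueness of such solutions, $\rho\equiv\alpha$, hence $a=\alpha T$.

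\emph{Convexity and continuity when $g(k)=k$.} Here $\Phi(\rho)=\rho$, so $\mathcal J_G$ is affine in $\rho$. Hence $I_{\rm dyn}$, a supremum of affine functionals, is convex; $I^0_\alpha$ is convex; and the energy is jointly convex. The constraint $\int_0^T\rho(t,0)\,dt=a$ is linear. Given minimizers $\pi^a,\pi^b$ from Theorem \ref{thm5-1}, the convex combination $\lambda\pi^a+(1-\lambda)\pi^b$ satisfies the constraint for $\lambda a+(1-\lambda)b$, which gives convexity of $\psi$. A convex function that is finite on $\R_+$ is continuous on $(0,\infty)$. At $0$, lower semicontinuity of $\psi$ (a good rate function) together with convexity gives $\lim_{a\downarrow0}\psi(a)=\psi(0)$: convexity yields $\limsup_{a\downarrow0}\psi(a)\le\psi(0)$, and lower semicontinuity yields the reverse inequality.

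The main obstacle is the finiteness construction at $a=0$ (and, for general $g$, large $a$), where one must verify $I_{\rm dyn}<\infty$ for a profile vanishing at the origin.
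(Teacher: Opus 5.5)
Your arguments for the zero set and for convexity and continuity when $g(k)=k$ are the same as the paper's: attainment via Theorem~\ref{thm5-1} and $I_\alpha(\pi)=0\Rightarrow\rho\equiv\alpha$; convexity of $I_\alpha$ plus linearity of the constraint; continuity on $(0,\infty)$ from finiteness, and at $0$ from convexity together with lower semicontinuity.

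The gap is in the finiteness step at $a=0$. Since $\rho\ge0$, the constraint $\int_0^T\rho(t,0)\,dt=0$ forces $\rho(t,0)=0$ for a.e.\ $t$. But any profile that is smooth, or merely continuous, up to $t=0$ with $\rho(0,\cdot)\equiv\alpha$ satisfies $\rho(t,0)\ge\alpha/2$ for small $t$. So your two requirements, smoothness and $\rho(0,\cdot)\equiv\alpha$ (which you use to get $I^0_\alpha(\pi_0)=0$), are incompatible at $a=0$. A profile like $c(t)\theta^2$ near the origin can only emanate from $\alpha$ if $c(t)\to\infty$ as $t\downarrow0$. In other words, a hole must open instantaneously at the origin, and you would then have to prove that $\mathcal E(\pi)$ and $I_{\mathrm{dyn}}(\pi)$ stay finite across this initial layer. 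Your remark about the flux near $\theta=0$ concerns only the spatial degeneracy at fixed $t>0$, not this temporal one. Such a layer can in fact be built. For a hole of width $w(t)$ with the displaced mass deposited next to it, the energy is of order $\int_0^T w^{-1}\,dt$ and the transport cost of order $\int_0^T \dot w^2 w\,dt$; both are finite for $w(t)=t^\beta$ with $1/3<\beta<1$. But this quantitative construction is exactly what the proposal does not supply.

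What you are missing is that nothing forces $I^0_\alpha(\pi_0)=0$: the rate function is $I_\alpha=I_{\mathrm{dyn}}+I^0_\alpha$, and a finite initial cost is allowed. The paper takes the constant trajectory $\rho^{(a)}\equiv a/T$. Its energy vanishes, and $\mathcal J_G(\pi^{(a)})=-\tfrac12\Phi(a/T)\int_0^T\!\int_\T(\nabla G)^2\,d\theta\,dt\le0$ for every $G$, so $I_{\mathrm{dyn}}(\pi^{(a)})=0$. Moreover, $I^0_\alpha(\pi^{(a)}_0)=\tfrac aT\log\tfrac{\Phi(a/T)}{\Phi(\alpha)}-\log\tfrac{Z(\Phi(a/T))}{Z(\Phi(\alpha))}$ is finite for every $a\ge0$; at $a=0$ it equals $\log Z(\Phi(\alpha))$. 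This gives $\psi(a)<\infty$ on all of $\R_+$ in one line, with no elliptic problem, no mass compensation, and no special treatment of $a=0$ or of large $a$. Your construction for $a>0$ is correct but unnecessary.
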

\begin{proof}
Fix $a\in\R_+$ and consider the constant trajectory
$\pi_t^{(a)}(d\theta):=(a/T)d\theta$, $t\in[0,T]$.
It satisfies $\int_0^T\rho^{(a)}(t,0)dt=a$.
Since this trajectory solves the hydrodynamic equation, $I_{\mathrm{dyn}}(\pi^{(a)})=0$, while
$I_\alpha^0(\pi_0^{(a)})<\infty$.
Therefore, by the variational formula \eqref{5-19},
$\psi(a)\le I_\alpha(\pi^{(a)})<\infty$.

It is clear that $\psi(\alpha T)=0$.
Conversely, assume that $\psi(a)=0$ and $a\neq\alpha T$.
Then, by Theorem \ref{thm5-1}, there exists a trajectory
$\pi(t,d\theta)=\rho(t,\theta)d\theta$ in $\Omega_T^{I_\alpha}$ such that
$\psi(a)=I_\alpha(\pi)$ and $\int_0^T \rho(t,0)dt = a$.
Since $I_\alpha(\pi)$ vanishes, $\pi_t(d\theta)$ is equal to $\alpha d\theta$ for any $t\in[0,T]$.
This contradicts the fact that $\int_0^T \rho(t,0)dt=a$ and $a\neq\alpha T$.

Suppose now that $g(k)=k$ for every $k\in\N$.
Then $\Phi(\rho)=\rho$ and $Z(\phi)=e^\phi$.
We first observe that $I_\alpha$ is convex.
Indeed, in this case $\mc E$ is convex, since it is the supremum of linear functionals of $\rho$,
and hence its finite-energy domain is convex.
Moreover, for each $G\in C^{1,2}([0,T]\times\T)$, the functional $\mc J_G$ is affine in $\rho$.
It follows that $I_{\mathrm{dyn}}$ is convex.
The initial rate function is given by
\begin{align*}
I_\alpha^0(m)
=\int_\T \left\{\rho(\theta)\log\frac{\rho(\theta)}{\alpha}-\rho(\theta)+\alpha\right\}d\theta,
\end{align*}
which is also convex.
Therefore, $I_\alpha$ is convex.
Since the constraint $\int_0^T\rho(t,0)dt=a$ in \eqref{5-19} is linear in $\rho$,
the variational formula implies that $\psi$ is convex on $\R_+$.

We have already shown that $\psi$ is finite on $\R_+$.
Hence, as a finite convex function, $\psi$ is continuous on $(0,\infty)$.
It remains to check continuity at $0$.
Since $\psi$ is a good rate function, it is lower semicontinuous.
Fix $b>0$. For every $0<a<b$, convexity gives
\begin{align*}
\psi(a)
\le \left(1-\frac{a}{b}\right)\psi(0)+\frac{a}{b}\psi(b).
\end{align*}
Consequently,
$\varlimsup_{a\downarrow0}\psi(a)\le\psi(0)$,
while lower semicontinuity yields
$\psi(0)\le\varliminf_{a\downarrow0}\psi(a)$.
Hence $\psi$ is continuous at $0$, which completes the proof.
\end{proof}

\begin{remark}\label{rem5-1}
Under assumptions (LG) and (M), the convexity of $I_\alpha$ used above is not available in general,
so the preceding convexity argument does not apply.
Nevertheless, the continuity of $\psi$ can also be proved under these assumptions
by adapting the approximation arguments of \cite{fg23,fgh25}.
To avoid developing these arguments and to keep the presentation simple, we restrict the continuity statement above to the case $g(k)=k$.
\end{remark}

Recall the definition of the function $\psi_{\varepsilon, \e, \ell, K}$ from \eqref{5-27}.
In the rest of this section,
we show that $\psi$ can be recovered from $\psi_{\varepsilon, \e, \ell, K}$
by successively removing the approximation parameters.
We also study several miscellaneous lemmata, which are needed for the proof of Theorems \ref{thm5-3}, \ref{thm5-2}.

\begin{proposition}\label{prop5-1}
Let $h$ be a local Lipschitz function and define $H,H_K,\overline H_{\ell,K}$ as in Section 2.3.
For any $A>0$ and each fixed $K\in\N$,
\begin{align*}
\lim_{\ell\to\infty}\sup_{0\le\rho\le A}
\left|\overline H_{\ell,K}(\rho)-H_K(\rho)\right|=0,
\end{align*}
and
\begin{align*}
\lim_{K\to\infty}\sup_{0\le\rho\le A}|H_K(\rho)-H(\rho)|=0.
\end{align*}
In particular,
\begin{align*}
\varlimsup_{K\to\infty}\varlimsup_{\ell\to\infty}\sup_{0\le\rho\le A}
\left|H(\rho)-\overline H_{\ell,K}(\rho)\right|=0.
\end{align*}
\end{proposition}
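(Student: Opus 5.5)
The plan is to prove the two limits separately and then combine them with the triangle inequality
\[
|H-\overline H_{\ell,K}|\le|H-H_K|+|H_K-\overline H_{\ell,K}|.
\]
Throughout, $h$ is supported on a finite set $A_h$ with Lipschitz constant $C_h$. This gives the growth bound $|h(\eta)|\le |h(0)|+C_h\sum_{x\in A_h}\eta(x)$. We will also use that the moments $E_{\nu_\rho}[\eta(0)^p]$ are continuous in $\rho$, hence bounded on $[0,A]$. Indeed, $\nu_\rho$ has marginal $\varphi^k/(g(k)!Z(\varphi))$ with $\varphi=\Phi(\rho)\le C_0A$, and $g(k)!\ge c_0^k k!$ gives uniform exponential moments.

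\textbf{Second limit ($K\to\infty$).} Since $|h-h_K|=|h|\mathbf 1\{\max_{x\in A_h}\eta(x)>K\}$, Cauchy--Schwarz gives
\[
|H(\rho)-H_K(\rho)|\le E_{\nu_\rho}[h^2]^{1/2}\,\Big(\sum_{x\in A_h}\nu_\rho(\eta(x)>K)\Big)^{1/2}.
\]
The first factor is bounded uniformly for $\rho\le A$ by the growth bound. For the second, Chebyshev gives $\nu_\rho(\eta(0)>K)\le E_{\nu_\rho}[\eta(0)]/K\le A/K$. The sup over $\rho\le A$ is therefore $O(K^{-1/2})$.

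\textbf{First limit ($\ell\to\infty$, $K$ fixed).} This is the law of large numbers step. By definition $\overline H_{\ell,K}(\rho)=E_{\nu_\rho}[H_K(\eta^\ell(0))]$, where $\eta^\ell(0)$ is an average of $2\ell+1$ i.i.d.\ variables with mean $\rho$ and variance at most $C_1\rho$ by \eqref{2-5}.

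The key input is that $H_K$ is Lipschitz on $\R_+$, or at least uniformly continuous on compacts with controlled growth. Note $|h_K|\le \|h_K\|_\infty<\infty$, since $h_K$ is a bounded local function: it is supported on configurations with at most $K$ particles per site of the finite set $A_h$. So $H_K$ is bounded by $M_K:=\|h_K\|_\infty$. Also $H_K(\rho)=\sum_{\xi}h_K(\xi)\prod_{x\in A_h}\Phi(\rho)^{\xi(x)}/(g(\xi(x))!Z(\Phi(\rho)))$ is a finite sum of smooth functions of $\rho$, hence smooth, and in particular uniformly continuous on $[0,2A]$.

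Split on the event $\{|\eta^\ell(0)-\rho|\le\gamma\}$. On this event $|H_K(\eta^\ell(0))-H_K(\rho)|\le w_{2A}(\gamma)$, where $w_{2A}$ is the modulus of continuity of $H_K$ on $[0,2A]$ and $\gamma\le A$. Off the event, bound the difference by $2M_K$. This gives
\[
\sup_{\rho\le A}|\overline H_{\ell,K}(\rho)-H_K(\rho)|\le w_{2A}(\gamma)+2M_K\frac{C_1A}{(2\ell+1)\gamma^2}.
\]
Letting $\ell\to\infty$ and then $\gamma\to0$ proves the first limit.

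The third statement follows by taking $\varlimsup_\ell$ and then $\varlimsup_K$ in the triangle inequality above.

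The only point needing care is that $\overline H_{\ell,K}$ was defined via $H_K$ rather than $H$. This is exactly what gives boundedness and makes the Chebyshev step painless; without the cutoff, the unbounded $h$ would require the moment bounds instead. I expect no real obstacle; the mildest subtlety is confirming uniformity in $\rho$ near $0$, which the continuity of $H_K$ on the closed interval $[0,2A]$ handles.
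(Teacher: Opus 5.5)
Your proof is correct and follows essentially the paper's argument. For the $\ell$-limit, both use a variance (Chebyshev) bound for $\eta^\ell(0)$ under $\nu_\rho$ together with the boundedness and regularity of $H_K$; you only need uniform continuity on $[0,2A]$, where the paper uses Lipschitz. For the $K$-limit, both rely on uniform moment control of $\nu_\rho$ for $\rho\le A$; you make this quantitative via Cauchy--Schwarz and Markov, obtaining $O(K^{-1/2})$, where the paper simply invokes uniform integrability.
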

\begin{proof}
For each fixed $K$, the function $H_K$ is bounded and Lipschitz. Computing the variance of $\eta^\ell(0)$ under $\nu_\rho$,
we obtain the first assertion uniformly over $0\le\rho\le A$.
On the other hand, by the Lipschitz property of $h$,
\begin{align*}
|h(\eta)-h_K(\eta)|
\le C_0\left(1+\sum_{x\in\supp h}\eta(x)\right)
{\bf 1}\left\{\max_{x\in\supp h}\eta(x)>K\right\}.
\end{align*}
The one-site marginals of $\nu_\rho$, $0\le\rho\le A$, have a uniformly bounded exponential moment.
Therefore, the right-hand side is uniformly integrable, which proves the second assertion.
The last assertion follows from the first two.
\end{proof}

We next control the contribution of large values of the density at the observed point,
uniformly at the level of the rate function.

\begin{proposition}\label{prop5-2}
For any $a>0$, we have
\begin{align}\label{5-8}
\lim_{A\to\infty}\sup_{\pi\in\Omega_T^{I_\alpha}}
\left\{ a\int_0^T \rho(t,0) {\bf 1}\{\rho(t,0)\ge A\}dt - I_\alpha(\pi) \right\} 
 =  0 .
\end{align}
\end{proposition}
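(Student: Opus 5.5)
The plan is to prove the two inequalities $\ge0$ and $\le o(1)$ separately. For the lower bound, take the equilibrium trajectory $\pi_t(d\theta)=\alpha\,d\theta$. It solves the hydrodynamic equation and satisfies $I_\alpha^0(\pi_0)=0$, so $I_\alpha(\pi)=0$. Once $A>\alpha$, the indicator vanishes, so the supremum in \eqref{5-8} is at least $0$ for all large $A$.

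For the upper bound, I will first establish a pointwise-in-time bound on $\rho(t,0)$. Mass is conserved along finite-rate trajectories: testing $\mc J_G$ with $G$ depending only on $t$ shows $\pi_t(\T)=\pi_0(\T)=:m$ for all $t$. Then, exactly as in \eqref{5-21}, I expect
\begin{align*}
\rho(t,0)\le m^{1/2}\mc E_t^{1/2}+m, \qquad \mc E_t:=\int_\T\frac{(\nabla\rho(t,\theta))^2}{\rho(t,\theta)}d\theta ,
\end{align*}
for a.e. $t$. Integrating in time and using the Schwarz inequality together with Proposition \ref{energyest} gives two estimates:
\begin{align*}
\int_0^T\rho(t,0)dt\le C\bigl(m^{1/2}(I_\alpha(\pi)+1)^{1/2}+m\bigr),\qquad
\int_0^T\rho(t,0)^2dt\le C\bigl(m(I_\alpha(\pi)+1)+m^2\bigr).
\end{align*}

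The key issue is that these bounds are at best linear in $I_\alpha(\pi)$, while the term $-I_\alpha(\pi)$ must absorb $a$ times them with an arbitrary constant $a$. The remedy is to use the superlinear entropy bound \eqref{5-28} instead of only Proposition \ref{prop5-7}. By \eqref{5-28}, $c_0m\log m\le C(I_\alpha^0(\pi_0)+1)\le C(I_\alpha(\pi)+1)$. Hence $m=o(I_\alpha(\pi))$ as $I_\alpha(\pi)\to\infty$, uniformly over trajectories. It follows that $m^{1/2}(I+1)^{1/2}+m=o(I)$. So there is $M=M(a)$ such that $a\int_0^T\rho(t,0)\,dt\le I_\alpha(\pi)$ whenever $I_\alpha(\pi)\ge M$. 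On that set the expression in \eqref{5-8} is nonpositive for every $A$, since the indicator only decreases the integral.

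On the complementary set $I_\alpha(\pi)<M$, I will use $\rho\,{\bf 1}\{\rho\ge A\}\le\rho^2/A$ and the second time-integrated bound. Together with $m\le C(M+1)$ from Proposition \ref{prop5-7}, this gives
\begin{align*}
a\int_0^T\rho(t,0){\bf 1}\{\rho(t,0)\ge A\}dt\le \frac{aC_M}{A}.
\end{align*}
The constant $C_M$ depends only on $M$ and $a$. Since $I_\alpha\ge0$, the supremum over this set is at most $aC_M/A\to0$ as $A\to\infty$, and combining the two regimes gives \eqref{5-8}. The main obstacle is the large-$I_\alpha$ regime, where a merely linear bound would fail for large $a$. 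Everything there hinges on extracting the logarithmic gain $m\log m\lesssim I_\alpha$ from \eqref{5-28}, and on checking that the $\mc E$ term enters only through a square root.
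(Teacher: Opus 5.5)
Your argument is correct, but it splits into cases differently from the paper. The paper splits according to the total mass $\pi_0(\T)$. Inserting the Young-type bound \eqref{5-9} with $\beta=aC_0$ and the entropy bound \eqref{5-28}, it shows that the bracket in \eqref{5-8} is at most $C_1(1+\pi_0(\T))-c_0\pi_0(\T)\log\pi_0(\T)$, hence nonpositive once the mass is large. For bounded mass it still allows $I_\alpha(\pi)$ to be unbounded. There, Schwarz with the indicator plus Chebyshev gives a tail bound $C A^{-1/2}(1+I_\alpha(\pi)^{3/4})$, and its sublinear growth in $I_\alpha$ is absorbed by $-I_\alpha(\pi)$.

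You instead split according to $I_\alpha(\pi)$. The same superlinear entropy bound yields $m=o(I_\alpha(\pi))$ uniformly, hence $a\int_0^T\rho(t,0)\,dt\le I_\alpha(\pi)$ above a threshold $M(a)$. Below that threshold, mass and energy are both bounded, so the pointwise inequality $\rho\,\mathbf 1\{\rho\ge A\}\le\rho^2/A$ already gives an $O(1/A)$ tail.

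Your route avoids the $I_\alpha^{3/4}$ bookkeeping. It also makes explicit two points the paper leaves tacit: the lower bound $\ge 0$ via the equilibrium trajectory, and conservation of mass behind \eqref{5-21}. The paper's explicit parameter $\beta$ directly produces the quantitative bound \eqref{5-29} of Remark \ref{rem5-2}, which is reused in Proposition \ref{prop5-6} and Lemma \ref{lem5-10}. Your two regimes together give that bound as well, with $\gamma=1/a$.

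One minor imprecision: \eqref{5-28} gives $c_0m\log m\le I_\alpha^0(\pi_0)+C_0(m+1)$. So your inequality $c_0m\log m\le C(I_\alpha^0(\pi_0)+1)$ holds only after absorbing the linear term for large $m$ (say with $c_0/2$ in place of $c_0$). This does not affect $m=o(I_\alpha(\pi))$.
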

\begin{proof}
The first step is to reduce the problem to estimating the supremum over trajectories with bounded mass.
Take $\pi(t,d\theta)=\rho(t,\theta)d\theta$ in $\Omega_T^{I_\alpha}$.

Recall the bound \eqref{5-21}.
Integrating \eqref{5-21} in $t$ and applying the Schwarz inequality give
\begin{align}\label{5-9}
\int_0^T \rho(t,\theta)dt
&\le  C(T\pi_0(\T)\mc E(\pi))^{1/2} + T\pi_0(\T) \notag \\
&\le (1/2\beta)\mc E(\pi) + T(1+C^2\beta/2)\pi_0(\T) \notag \\
&\le (C_0/2\beta)(1+I_\alpha(\pi)) + T(1+C^2\beta/2)\pi_0(\T),
\end{align}
for any $\theta\in\T$ and $\beta>0$, where in the third line we used Proposition \ref{energyest}.

Letting $\theta=0$ and $\beta=aC_0$ in \eqref{5-9}, by \eqref{5-28}, we obtain
\begin{align*}
a\int_0^T \rho(t,0) {\bf 1}\{\rho(t,0)\ge A\}dt - I_\alpha(\pi) \le C_1(1+\pi_0(\T)) - c_0\pi_0(\T)\log\pi_0(\T),
\end{align*}
for some constants $c_0, C_1>0$, which are independent of $\pi$ and $A$.
Therefore, the expression inside the braces in \eqref{5-8} is
nonpositive whenever $\pi_0(\T)\ge C_2$ for some constant $C_2>0$, which depends on $a$ but
does not depend on $\pi$ and $A$.
Thus, we may restrict the supremum in \eqref{5-8}
to trajectories satisfying $\pi_0(\T)\le C_2$.

In what follows, we use the elementary inequality $(a+b)^p\le2^pa^p+2^pb^p$
without any further mention. By the Schwarz inequality, \eqref{5-21}, \eqref{5-9} with $\theta=0$ and Proposition \ref{prop5-7}, we have
\begin{align}\label{5-12}
\int_0^T \rho(t,0) {\bf 1}\{\rho(t,0)\ge A\}dt
 &\le  \left(\int_0^T \rho(t,0)^2dt\right)^{1/2}\left(\int_0^T {\bf 1}\{\rho(t,0)\ge A\}dt\right)^{1/2}\notag\\
 &\le \dfrac{C_2}{A^{1/2}}(1+\mc E(\pi)^{1/2})\left(\int_0^T \rho(t,0)dt\right)^{1/2}\notag\\
 &\le \dfrac{C_3}{A^{1/2}}(1+I_\alpha(\pi)^{1/2})(1+\mc E(\pi)^{1/4}) \notag\\
 &\le \dfrac{C_4}{A^{1/2}}(1+I_\alpha(\pi)^{3/4}).
\end{align}
The conclusion of the proposition easily follows from \eqref{5-12}.
\end{proof}

\begin{remark}\label{rem5-2}
In the paragraph following \eqref{5-9}, we have in fact proved the following:
for any $\gamma>0$, there exists a constant $C_\gamma>0$ such that
for any $\theta\in\T$ and $\pi\in\Omega_T^{I_\alpha}$,
\begin{align}\label{5-29}
\int_0^T \rho(t,\theta)dt \le \gamma I_\alpha(\pi)+ C_\gamma.
\end{align}
In particular, by the Lipschitz continuity of $H$,
Theorem \ref{thm5-1} and taking $\theta=0$ in \eqref{5-29}, we have
$\lim_{|a|\to\infty}\psi(a)=\infty$.
\end{remark}

Recall the definition of $\kappa_\e$.
For each $\pi\in\Omega_T$, we define
\begin{align*}
J_{\varepsilon,\e,\ell,K}(\pi):=  \int_0^T dt \int_\T d\theta \kappa_\e(\theta)\left[ \Phi\left(\left\langle \pi_t,\iota_\varepsilon(\cdot-\theta)\right\rangle\right)
- \overline\Phi_{\ell,K}\left(\left\langle \pi_t,\iota_\varepsilon(\cdot-\theta)\right\rangle\right)  \right].
\end{align*}
For each $\pi\in\Omega_T^{I_\alpha}$, we also define
\begin{align*}
J_{\ell,K}(\pi) := \int_0^T \left[ \Phi(\rho(t,0)) - \overline\Phi_{\ell,K}(\rho(t,0))  \right] dt.
\end{align*}

The following proposition allows us to remove the two spatial regularizations
in the variational expression.

\begin{proposition}\label{prop5-6}
For any $a>0, \ell,K\in\N$,
\begin{align*}
\varlimsup_{\e\to0}\varlimsup_{\varepsilon\to0}
\sup_{\pi\in \Omega_T}\left\{ aJ_{\varepsilon,\e,\ell,K}(\pi) - I_\alpha(\pi) \right\} 
=  \sup_{\pi \in \Omega_T^{I_\alpha}}\left\{ aJ_{\ell,K}(\pi) - I_\alpha(\pi) \right\} .
\end{align*}
\end{proposition}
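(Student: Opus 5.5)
The plan is to compare the two functionals trajectory by trajectory, with an error controlled by the rate function as in Lemma \ref{lem5-6}, and then absorb that error into $I_\alpha$ using the growth bound of Remark \ref{rem5-2}. Set $F:=\Phi-\overline\Phi_{\ell,K}$. Then
\[
J_{\varepsilon,\e,\ell,K}(\pi)=\int_0^T dt\int_\T d\theta\,\kappa_\e(\theta)F\bigl(\langle\pi_t,\iota_\varepsilon(\cdot-\theta)\rangle\bigr),\qquad J_{\ell,K}(\pi)=\int_0^T F(\rho(t,0))\,dt .
\]
First I will check that $F$ is Lipschitz on $\R_+$. The function $\Phi$ is Lipschitz by \eqref{boundPhi'}. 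For $\overline\Phi_{\ell,K}$, differentiate $\rho\mapsto E_{\nu_\rho}[f]$ and use the product structure. This derivative is a sum of terms $\mathrm{Cov}_{\nu_\rho}(f,\eta(y))/E_{\nu_\rho}[(\eta(0)-\rho)^2]$. The function $f=\Phi_K(\eta^\ell(0))$ is bounded and satisfies $|f|\le C\sum_{|y|\le\ell}\eta(y)$. Hence each covariance is $O(\rho)$ for small $\rho$, and \eqref{2-5} keeps the ratio bounded. This step is a side computation, but the rest of the argument needs it.

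Next comes the key uniform estimate. Let $\pi\in\Omega_T^{I_\alpha}$ and $\theta\in[0,\e]$. Lemma \ref{lem5-6} applied at the point $\theta$ (its proof is translation invariant) gives
\[
\int_0^T\bigl|F(\langle\pi_t,\iota_\varepsilon(\cdot-\theta)\rangle)-F(\rho(t,\theta))\bigr|\,dt\le C\varepsilon^{1/4}(I_\alpha(\pi)+1).
\]
The same Cauchy--Schwarz argument bounds $\int_0^T|F(\rho(t,\theta))-F(\rho(t,0))|\,dt$ for $|\theta|\le\e$. That argument uses $|\rho(t,\theta)-\rho(t,0)|\le\int_0^{\theta}|\nabla\rho|$, then Lemma \ref{lem5-8} and Proposition \ref{energyest}, and yields $C\e^{1/4}(I_\alpha(\pi)+1)$. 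Averaging against $\kappa_\e$ then gives, with $\delta:=\varepsilon^{1/4}+\e^{1/4}$,
\[
|J_{\varepsilon,\e,\ell,K}(\pi)-J_{\ell,K}(\pi)|\le C\delta\,(I_\alpha(\pi)+1),\qquad \pi\in\Omega_T^{I_\alpha}.
\]
If $I_\alpha(\pi)=\infty$, the term inside the supremum is $-\infty$. The quantity $J_{\varepsilon,\e,\ell,K}$ is finite because $\Phi\ge0$ and $|\overline\Phi_{\ell,K}|$ is bounded, so $J_{\varepsilon,\e,\ell,K}\ge -C_K$. Both suprema may therefore be taken over $\Omega_T^{I_\alpha}$.

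For the lower bound, fix $\pi\in\Omega_T^{I_\alpha}$. The estimate above gives $J_{\varepsilon,\e,\ell,K}(\pi)\to J_{\ell,K}(\pi)$. Consequently the $\varliminf$ of the left-hand side is at least $aJ_{\ell,K}(\pi)-I_\alpha(\pi)$, and we then take the supremum over $\pi$.

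For the upper bound, the estimate yields
\[
aJ_{\varepsilon,\e,\ell,K}(\pi)-I_\alpha(\pi)\le aJ_{\ell,K}(\pi)-(1-aC\delta)I_\alpha(\pi)+aC\delta .
\]
Write $S(b):=\sup_{\pi\in\Omega_T^{I_\alpha}}\{bJ_{\ell,K}(\pi)-I_\alpha(\pi)\}$. Then the left side of the proposition, before the limits, is at most $(1-aC\delta)\,S\bigl(a/(1-aC\delta)\bigr)+aC\delta$. It therefore suffices to show that $S$ is finite and continuous on $(0,\infty)$. Finiteness comes from $J_{\ell,K}(\pi)\le C\int_0^T\rho(t,0)\,dt+C$, which follows from \eqref{boundPhi} and the boundedness of $\overline\Phi_{\ell,K}$, combined with \eqref{5-29} with $\gamma$ small. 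As a supremum of affine functions of $b$, $S$ is convex, and a finite convex function is continuous on $(0,\infty)$. Letting $\varepsilon\to0$ and then $\e\to0$ sends $\delta\to0$ and gives the upper bound $S(a)$.

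I expect the main obstacle to be the uniform, rate-controlled comparison estimate. In particular, I must verify that the Lemma \ref{lem5-6} argument really applies at every $\theta\in[0,\e]$ with a constant independent of $\theta$. I also need the Lipschitz property of $\overline\Phi_{\ell,K}$ near $\rho=0$. If that global Lipschitz property fails, the fallback is to truncate at large density using Proposition \ref{prop5-2}.
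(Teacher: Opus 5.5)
Your argument is correct, and it takes a genuinely different route from the paper.

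\emph{The paper's route.} It first uses \eqref{5-29} to restrict to a level set $\{I_\alpha\le C_2\}$ with $C_2$ independent of $\varepsilon,\e$. Using goodness of $I_\alpha$ and continuity of $J_{\varepsilon,\e,\ell,K}$, it picks maximizers $\pi^{\varepsilon,\e}$. It then extracts convergent subsequences twice, first as $\varepsilon\to0$ and then as $\e\to0$. To pass to the limit it uses Lemma \ref{lem5-6} for the spatial averaging, and Lemma \ref{lem5-7} at each $\theta$ together with bounded convergence for the change of trajectory. The result is an actual limit trajectory $\pi$ with the left-hand side at most $aJ_{\ell,K}(\pi)-I_\alpha(\pi)$, and the argument reuses the sequential-continuity machinery needed for Theorem \ref{thm5-1}.

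\emph{Your route.} You never move the trajectory. You note that the same estimates give $|J_{\varepsilon,\e,\ell,K}-J_{\ell,K}|\le C\delta(I_\alpha+1)$ uniformly on $\Omega_T^{I_\alpha}$. These are Lemma \ref{lem5-6} at every centre $\theta\in[0,\e]$ (legitimate, since the $L^2$ and energy bounds are global) and the $\rho(t,\theta)$ versus $\rho(t,0)$ comparison the paper also performs. You then absorb the error into $I_\alpha$. This avoids compactness, maximizers and Lemma \ref{lem5-7}, and gives an explicit error bound.

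Your Lipschitz computation for $\overline\Phi_{\ell,K}$ fills in a step the paper only asserts. Boundedness of $f$ alone already gives $|\mathrm{Cov}_{\nu_\rho}(f,\eta(y))|\le 2\|f\|_\infty\rho$ for all $\rho$, so with \eqref{2-5} the ratio is bounded globally, not just near $0$.

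Two small remarks:
\begin{enumerate}
\item The convexity of $S$ is not needed. As in the paper's first step, you may restrict to $I_\alpha\le C_2$. There your error is uniformly at most $C\delta(C_2+1)$, which gives the upper bound $S(a)+aC\delta(C_2+1)$ directly.
\item When $I_\alpha(\pi)=\infty$, the fact you need is $J_{\varepsilon,\e,\ell,K}(\pi)<+\infty$, not $J_{\varepsilon,\e,\ell,K}\ge -C_K$. It holds because $\Phi(\rho)\le C_0\rho$ and $t\mapsto\pi_t(\T)$ is bounded along c\`adl\`ag paths.
\end{enumerate}
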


\begin{proof}
Fix $a>0, \ell,K\in\N$ and set $R_{\ell,K}(\rho):=\Phi(\rho)-\overline\Phi_{\ell,K}(\rho)$, $\rho\in\R_+$.
Since $R_{\ell,K}$ is Lipschitz (with a Lipschitz constant that may depend on $\ell$ and $K$) and $R_{\ell,K}(0)=0$,
there exists a constant $C_0>0$ such that $|R_{\ell,K}(\rho)|\le C_0\rho$, for any $\rho\in\R_+$.
Taking $\gamma=(2aC_0)^{-1}$ in \eqref{5-29} shows that, for any $\pi\in\Omega_T$
\begin{align*}
aJ_{\varepsilon,\e,\ell,K}(\pi) - I_\alpha(\pi) \le -(1/2)I_\alpha(\pi) + C_1,
\end{align*}
for some constant $C_1>0$, which is independent of $\varepsilon$ and $\e$.
Therefore, we may restrict the supremum on the left-hand side of the statement to trajectories satisfying $I_\alpha(\pi)\le C_2$, for some constant $C_2>0$, which is independent of $\varepsilon$ and $\e$.

Since the rate function $I_\alpha$ is good and lower semicontinuous, for any $\varepsilon, \e>0$,
there exists a trajectory $\pi^{\varepsilon, \e}(t,d\theta)=\rho^{\varepsilon, \e}(t,\theta)d\theta\in\Omega_T^\ac$ such that
\begin{align*}
\sup_{\pi\in \Omega_T}\left\{ aJ_{\varepsilon,\e,\ell,K}(\pi) - I_\alpha(\pi) \right\} = aJ_{\varepsilon,\e,\ell,K}(\pi^{\varepsilon, \e}) - I_\alpha(\pi^{\varepsilon, \e}),
\end{align*}
and $I_\alpha(\pi^{\varepsilon, \e})\le C_2$. Moreover, taking a subsequence if necessary, we may assume that
for any fixed $\e$,  the sequence $\{\pi^{\varepsilon, \e}\}_{\varepsilon>0}$ converges to some $\pi^\e(t,d\theta)=\rho^\e(t,\theta)d\theta$ in $\Omega_T^{I_\alpha}$ with $I_\alpha(\pi^\e)\le C_2$.
We now claim that
\begin{align*}
\lim_{\varepsilon\to0}J_{\varepsilon,\e,\ell,K}(\pi^{\varepsilon,\e})=
\int_0^T dt \int_\T d\theta \kappa_\e(\theta) R_{\ell,K}\left( \rho^\e(t,\theta) \right).
\end{align*}
Indeed, let us estimate
\begin{align*}
&\left| J_{\varepsilon,\e,\ell,K}(\pi^{\varepsilon,\e}) - \int_0^T dt \int_\T d\theta \kappa_\e(\theta) R_{\ell,K}\left( \rho^\e(t,\theta) \right) \right|\\
&\quad \le \left| \int_0^T dt \int_\T d\theta \kappa_\e(\theta) \left[ R_{\ell,K}\left(\int_\T \iota_\varepsilon(\theta_1-\theta)\rho^{\varepsilon,\e}(t,\theta_1)d\theta_1\right)
- R_{\ell,K}\left( \rho^{\varepsilon,\e}(t,\theta )\right) \right] \right| \\
&\qquad + \left| \int_0^T dt \int_\T d\theta \kappa_\e(\theta) \left[ R_{\ell,K}\left( \rho^{\varepsilon,\e}(t,\theta) \right)
-  R_{\ell,K}\left( \rho^\e(t,\theta) \right) \right] \right|.
\end{align*}
By Lemma \ref{lem5-6}, the second line of the previous expression is of order $\varepsilon^{1/4}$ and therefore vanishes as $\varepsilon\to0$.
The third line also vanishes as $\varepsilon\to0$ by the bounded convergence theorem and Lemma \ref{lem5-7}.
(By spatial translation, Lemma \ref{lem5-7} remains valid with $0$ replaced by any fixed $\theta\in\mathbb T$.)
Consequently, we have shown
\begin{align*}
\varlimsup_{\varepsilon\to0}\sup_{\pi\in \Omega_T}\left\{ aJ_{\varepsilon,\e,\ell,K}(\pi) - I_\alpha(\pi) \right\}
\le a\int_0^T dt \int_\T d\theta \kappa_\e(\theta) R_{\ell,K}\left( \rho^\e(t,\theta) \right) - I_\alpha(\pi^{\e}).
\end{align*}

Taking a subsequence of the sequence $\{\pi^\e\}_{\e>0}$, we may assume that the sequence $\{\pi^\e\}_{\e>0}$ converges to some $\pi(t,d\theta)=\rho(t,\theta)d\theta$ in $\Omega_T^{I_\alpha}$ with $I_\alpha(\pi)\le C_2$.
Let us write
\begin{align*}
&\int_0^T dt \int_\T d\theta \kappa_\e(\theta) R_{\ell,K}\left( \rho^\e(t,\theta) \right) - \int_0^T R_{\ell,K}\left(\rho(t,0) \right) dt\\
&\quad= \int_0^T dt \int_\T d\theta \kappa_\e(\theta) \left[ R_{\ell,K}\left( \rho^\e(t,\theta) \right) - R_{\ell,K}\left(\rho^\e(t,0)\right)  \right]\\
&\qquad + \int_0^T dt \int_\T d\theta \kappa_\e(\theta) \left[ R_{\ell,K}\left( \rho^\e(t,0) \right) - R_{\ell,K}\left(\rho(t,0)\right)\right].
\end{align*}
The second line of the previous expression is bounded above by
\begin{align*}
&\|R_{\ell,K}\|_\Lip\int_0^Tdt \int_\T d\theta \kappa_\e(\theta) \left| \rho^\e(t,\theta) - \rho^\e(t,0) \right|\\
&\quad \le \|R_{\ell,K}\|_\Lip \int_0^T dt \int_\T d\theta  \kappa_\e(\theta)\int_0^{|\theta|} \left| \nabla\rho^\e(t,\theta_1) \right| d\theta_1.
\end{align*}
Now we are in the same position as in the first step of the proof of Lemma \ref{lem5-6}.
Following the same proof and using $I_\alpha(\pi^\e)\le C_2$, we see that the second line of the penultimate expression
is bounded above by $C_3\e^{1/4}$. On the other hand, the third line of the penultimate expression is equal to
\begin{align*}
\int_0^T dt \left[ R_{\ell,K}\left( \rho^\e(t,0) \right) - R_{\ell,K}\left(\rho(t,0)\right)\right],
\end{align*}
which vanishes as $\e\to0$ by Lemma \ref{lem5-7}.
Therefore, we have shown
\begin{align*}
\varlimsup_{\e\to0}\varlimsup_{\varepsilon\to0}\sup_{\pi\in \Omega_T}\left\{ aJ_{\varepsilon,\e,\ell,K}(\pi) - I_\alpha(\pi) \right\}
\le aJ_{\ell,K}(\pi) - I_\alpha(\pi),
\end{align*}
for some $\pi\in\Omega_T^{I_\alpha}$.

It remains to show the converse inequality.
Fix any $\pi(t,d\theta)=\rho(t,\theta)d\theta\in\Omega_T^{I_\alpha}$.
Then one can see that
\begin{align*}
\lim_{\e\to0}\lim_{\varepsilon\to0} J_{\varepsilon,\e,\ell,K}(\pi)=\lim_{\e\to0}\int_0^Tdt\int_\T d\theta\kappa_\e(\theta)R_{\ell,K}(\rho(t,\theta))=J_{\ell,K}(\pi),
\end{align*}
which completes the proof of Proposition \ref{prop5-6}.
\end{proof}

Recall the definition of $\psi_{\varepsilon,\e,\ell,K}$ in \eqref{5-27}.
We can now identify $\psi$ in \eqref{2-3} by successively removing all the approximation parameters.

\begin{lemma}\label{lem5-11}
For any $a\in\R$,
\begin{align*}
\varliminf_{\delta\to0}\varliminf_{K\to\infty}\varliminf_{\ell\to\infty}\varliminf_{\e\to0} \varliminf_{\varepsilon\to0}
\inf_{b\in (a-\delta, a+\delta)} \psi_{\varepsilon, \e, \ell, K}(b) = \psi(a).
\end{align*}
\end{lemma}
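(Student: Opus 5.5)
We prove the two inequalities separately. By Theorem \ref{thm5-1} we may use the variational formula $\psi(a)=\inf\{I_\alpha(\pi):\pi\in\Omega_T^{I_\alpha},\int_0^T H(\rho(t,0))dt=a\}$ in place of \eqref{2-3}. For $\pi\in\Omega_T^{I_\alpha}$ write
\begin{align*}
\Lambda_{\varepsilon,\e,\ell,K}(\pi):=\int_0^T dt\int_\T d\theta\,\kappa_\e(\theta)\,\overline H_{\ell,K}\bigl(\lan\pi_t,\iota_\varepsilon(\cdot-\theta)\ran\bigr),
\qquad \Lambda(\pi):=\int_0^T H(\rho(t,0))dt,
\end{align*}
so that $\psi_{\varepsilon,\e,\ell,K}(b)=\inf\{I_\alpha(\pi):\Lambda_{\varepsilon,\e,\ell,K}(\pi)=b\}$. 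The key estimate is that, for every $M>0$,
\begin{align*}
\varlimsup_{K\to\infty}\varlimsup_{\ell\to\infty}\varlimsup_{\e\to0}\varlimsup_{\varepsilon\to0}\sup_{\pi:I_\alpha(\pi)\le M}\bigl|\Lambda_{\varepsilon,\e,\ell,K}(\pi)-\Lambda(\pi)\bigr|=0.
\end{align*}

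To prove this estimate we split the difference into three pieces. First, replace $\overline H_{\ell,K}$ by $H$ inside the $\kappa_\e,\iota_\varepsilon$ averages. Proposition \ref{prop5-1} handles densities in $[0,A]$. On $\{\rho\ge A\}$ we use that $H$ and $\overline H_{\ell,K}$ grow at most linearly with constants uniform in $\ell,K$, since $h_K$ is dominated by $C(1+\sum\eta(x))$. The contribution of large densities is then controlled by a version of Proposition \ref{prop5-2} applied to the smoothed density $\lan\pi_t,\iota_\varepsilon(\cdot-\theta)\ran$. That version follows from \eqref{5-21}: the pointwise bound \eqref{5-21} also bounds every average of $\rho$, so the argument of \eqref{5-12} carries over uniformly on $\{I_\alpha\le M\}$. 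Second, remove the $\iota_\varepsilon$ average, replacing $\lan\pi_t,\iota_\varepsilon(\cdot-\theta)\ran$ by $\rho(t,\theta)$; Lemma \ref{lem5-6} (translated) gives an error $C\varepsilon^{1/4}(M+1)$. Third, remove the $\kappa_\e$ average, replacing $\rho(t,\theta)$ by $\rho(t,0)$; the argument in the proof of Proposition \ref{prop5-6} bounds this by $C\e^{1/4}(M+1)$, since $H$ is Lipschitz.

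\emph{Upper bound} ($\le\psi(a)$). Assume $\psi(a)<\infty$ and take $\pi$ attaining \eqref{5-19}, so $I_\alpha(\pi)=\psi(a)$. By the key estimate with $M=I_\alpha(\pi)$, for any $\delta>0$ we have $b:=\Lambda_{\varepsilon,\e,\ell,K}(\pi)\in(a-\delta,a+\delta)$ once the parameters are in the appropriate regime. Hence $\inf_{|b-a|<\delta}\psi_{\varepsilon,\e,\ell,K}(b)\le\psi(a)$, and taking the iterated liminfs gives the bound.

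\emph{Lower bound} ($\ge\psi(a)$). Assume the left side equals $L<\infty$. Then there are diagonal sequences $\delta_n\to0$ and parameters $(\varepsilon_n,\e_n,\ell_n,K_n)$ in the appropriate order, together with trajectories $\pi^n$ satisfying $I_\alpha(\pi^n)\le L+1/n$ and $|\Lambda_{\varepsilon_n,\e_n,\ell_n,K_n}(\pi^n)-a|<\delta_n$. The key estimate with $M=L+1$ gives $\Lambda(\pi^n)\to a$. Because $I_\alpha$ is good, a subsequence converges to some $\pi$ with $I_\alpha(\pi)\le L$. Lemma \ref{lem5-7} with $F=H$ then gives $\Lambda(\pi)=a$, so $\psi(a)\le L$ by Theorem \ref{thm5-1}.

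The main obstacle is the diagonal extraction. The key estimate holds only as an iterated limsup, so at each level we must choose parameters that make the error smaller than $1/n$, while the iterated liminfs defining $L$ must still be approximated by those same choices. We handle this by letting the inner parameters ($\varepsilon$, then $\e$) go to zero before the outer ones ($\ell$, then $K$), at each stage choosing them small or large enough for both requirements. A further technical point is the uniform large-density control in the first piece, with constants uniform in $\ell$ and $K$.
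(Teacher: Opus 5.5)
Your proposal is correct. It uses the same ingredients as the paper's proof: Lemma \ref{lem5-6} translated in $\theta$, Proposition \ref{prop5-1} with the large-density control from \eqref{5-21} and \eqref{5-12}, goodness of $I_\alpha$, Lemma \ref{lem5-7}, and Theorem \ref{thm5-1}. The difference is only organizational: you replace $\overline H_{\ell,K}$ by $H$ at the mollified level first, so the $\varepsilon,\e$ errors no longer depend on $\ell,K$ and the approximation bound is uniform on level sets of $I_\alpha$, which lets you use compactness once along a single diagonal sequence, whereas the paper passes to the limit one parameter at a time along almost minimizers, invoking Lemmata \ref{lem5-6} and \ref{lem5-7} at each stage.
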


\begin{proof}
We first prove the lower bound. If the left-hand side is infinite, there is nothing to prove.
Otherwise, by taking almost minimizers and using the goodness of $I_\alpha$, we may restrict the argument to trajectories
whose rate functions are bounded by a constant independent of the approximation parameters.
For fixed $\e,\ell,K$, letting $\varepsilon\to0$ and using Lemmata \ref{lem5-6}, \ref{lem5-7},
we see that the constraint in \eqref{5-27} converges along such trajectories to
\begin{align*}
\int_0^Tdt\int_\T d\theta\kappa_\e(\theta)\overline H_{\ell,K}(\rho(t,\theta)).
\end{align*}
Letting next $\e\to0$, again by Lemmata \ref{lem5-6}, \ref{lem5-7}, we see that this converges to
\begin{align*}
\int_0^T\overline H_{\ell,K}(\rho(t,0))dt.
\end{align*}
For fixed $K$, letting $\ell\to\infty$ gives $\int_0^T H_K(\rho(t,0))dt$.
Indeed, Proposition \ref{prop5-1}, applied before sending $K\to\infty$, together with the tail estimate \eqref{5-12}
shows that the difference vanishes uniformly on bounded level sets of $I_\alpha$.
Finally, letting $K\to\infty$ and using Proposition \ref{prop5-1} and \eqref{5-12} gives
\begin{align*}
\int_0^T H(\rho(t,0))dt=a.
\end{align*}
The lower semicontinuity of $I_\alpha$ and Theorem \ref{thm5-1} therefore yield
\begin{align*}
\varliminf_{\delta\to0}\varliminf_{K\to\infty}\varliminf_{\ell\to\infty}\varliminf_{\e\to0} \varliminf_{\varepsilon\to0}
\inf_{b\in (a-\delta, a+\delta)} \psi_{\varepsilon, \e, \ell, K}(b) \ge \psi(a).
\end{align*}

For the converse inequality, take any trajectory $\pi(t,d\theta)=\rho(t,\theta)d\theta\in\Omega_T^{I_\alpha}$ satisfying
$\int_0^T H(\rho(t,0))dt=a$.
By the same successive approximation, now for the fixed trajectory $\pi$,
\begin{align*}
\varlimsup_{K\to\infty}\varlimsup_{\ell\to\infty}\varlimsup_{\e\to0}\varlimsup_{\varepsilon\to0}
\left|\int_0^Tdt\int_\T d\theta\kappa_\e(\theta)
\overline H_{\ell,K}\left(\int_\T\iota_\varepsilon(\theta_1-\theta)\rho(t,\theta_1)d\theta_1\right)-a\right|=0.
\end{align*}
Hence the left-hand side of the statement is bounded above by $I_\alpha(\pi)$.
Taking the infimum over such trajectories and using Theorem \ref{thm5-1} gives the converse inequality.
\end{proof}

Let $C$ be a closed subset of $\R$. For each $\delta>0$, let
\begin{align*}
C^\delta:=\left\{a\in\R: \inf_{b\in C}|a -b|\le \delta \right\}.
\end{align*}

\begin{lemma}\label{lem5-10}
For any closed subset $C$ of $\R$, 
\begin{align*}
\varliminf_{\delta\to0}\varliminf_{K\to\infty}\varliminf_{\ell\to\infty}\varliminf_{\e\to0} \varliminf_{\varepsilon\to0}
\inf_{a\in C^\delta}\psi_{\varepsilon, \e, \ell, K}(a) = \inf_{a\in C}\psi(a).
\end{align*}
\end{lemma}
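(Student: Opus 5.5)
We prove the two inequalities separately, following the pattern of Lemma \ref{lem5-11} but with the set $C^\delta$ in place of the interval $(a-\delta,a+\delta)$.

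\emph{Upper bound ($\le$).} If $\inf_C\psi=\infty$ there is nothing to prove. Otherwise, fix $a\in C$ with $\psi(a)<\infty$. Since $(a-\delta,a+\delta)\subset C^\delta$, we have $\inf_{C^\delta}\psi_{\varepsilon,\e,\ell,K}\le\inf_{b\in(a-\delta,a+\delta)}\psi_{\varepsilon,\e,\ell,K}(b)$. The upper-bound half of Lemma \ref{lem5-11} then applies: take a minimizer $\pi$ from Theorem \ref{thm5-1} and use the successive approximation for this fixed trajectory. This shows that the iterated $\varliminf$ of the left-hand side is at most $\psi(a)$, and hence at most $\inf_C\psi$ after taking the infimum over $a\in C$. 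The iterated $\varliminf$ is monotone in $\delta$, so passing $\delta\to0$ is harmless.

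\emph{Lower bound ($\ge$).} Assume the left-hand side equals $L<\infty$. Choose sequences $\delta_n\to0$, $K_n\to\infty$, $\ell_n\to\infty$, $\e_n\to0$, $\varepsilon_n\to0$, chosen diagonally in the order dictated by the iterated limits, together with points $b_n\in C^{\delta_n}$ and trajectories $\pi^n$ satisfying the constraint in \eqref{5-27} with value $b_n$ and $I_\alpha(\pi^n)\le L+1/n$. Since $I_\alpha$ is good, we may pass to a subsequence along which $\pi^n\to\pi$ with $I_\alpha(\pi)\le L$.

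Next we control the constraint values. By Remark \ref{rem5-2}, the bound \eqref{5-29}, and the linear growth of $\overline H_{\ell,K}$ (uniform in $\ell,K$, by Lipschitz continuity of $h$), the values $b_n$ are bounded. Hence, along a further subsequence, $b_n\to b$, and $b\in C$ because $C$ is closed and $\delta_n\to0$.

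It remains to identify $b$ with $\int_0^T H(\rho(t,0))\,dt$. The errors are
\begin{align*}
\left|b_n-\int_0^T\overline H_{\ell_n,K_n}(\rho^n(t,0))\,dt\right| \quad\text{and}\quad \left|\int_0^T\big[\overline H_{\ell_n,K_n}-H\big](\rho^n(t,0))\,dt\right|.
\end{align*}
We control them as follows.
\begin{enumerate}
\item The first error is bounded by $C(\varepsilon_n^{1/4}+\e_n^{1/4})(L+2)$, using Lemma \ref{lem5-6} and its translated version.
\item For the second error, split according to whether $\rho^n(t,0)\le A$ or not. On $\{\rho^n(t,0)\le A\}$, Proposition \ref{prop5-1} makes the error small. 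On the complement, use the tail bound \eqref{5-12} together with linear growth.
\item Finally, Lemma \ref{lem5-7} with $F=H$ passes from $\rho^n$ to $\rho$.
\end{enumerate}
Together these give $\int_0^T H(\rho(t,0))\,dt=b\in C$. By Theorem \ref{thm5-1}, $\psi(b)\le I_\alpha(\pi)\le L$, and therefore $\inf_C\psi\le L$.

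\emph{Main obstacle.} The delicate step is the diagonal extraction. The iterated $\varliminf$'s must be realized by a single sequence of parameters, and the error estimates must be uniform in the remaining parameters. This uniformity is available because every estimate above depends only on the uniform bound on $I_\alpha(\pi^n)$. The only exception is the first claim of Proposition \ref{prop5-1}, which requires $\ell\to\infty$ for fixed $K$. This is compatible with the order of limits if, for each $K_n$, we choose $\ell_n$ large enough depending on $K_n$ and $A$.
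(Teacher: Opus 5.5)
Your proposal is correct and follows essentially the paper's route. The paper's coercivity bound $\psi_{\varepsilon,\e,\ell,K}(a)\ge(|a|-C_\gamma)/\gamma$, which comes from Remark~\ref{rem5-2} and the linear growth of $\overline H_{\ell,K}$ uniform in $\ell,K$, is exactly what makes your $b_n$ bounded, and after that the argument of Lemma~\ref{lem5-11} applies, which you carry out along a single diagonal sequence.

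One small correction: the constant in your step (1) involves the Lipschitz constant of $\overline H_{\ell_n,K_n}$, which may depend on $(\ell_n,K_n)$, so it is not uniform as your last paragraph claims. This is harmless, because $\e_n$ and $\varepsilon_n$ are chosen last and can be taken small relative to $(\ell_n,K_n)$.
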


\begin{proof}
Note that the sublevel sets of $\psi_{\varepsilon,\e,\ell,K}$ are
uniformly bounded in the approximation parameters.
Indeed, since $h$ is Lipschitz and $|h_K|\le |h|$,
there exists a constant $C_0>0$, independent of $\ell$ and $K$, such that
$|\overline H_{\ell,K}(\rho)|\le C_0(1+\rho)$ for all $\rho\ge0$.
Hence, by Remark \ref{rem5-2} and the definitions of $\iota_\varepsilon$ and $\kappa_\e$,
for every $\gamma>0$ there exists $C_\gamma>0$, independent of $\varepsilon,\e,\ell,K$, such that
$$
\left|
\int_0^Tdt\int_\T d\theta\,\kappa_\e(\theta)
\overline H_{\ell,K}\bigl(
\langle\pi_t,\iota_\varepsilon(\cdot-\theta)\rangle
\bigr)
\right|
\le \gamma I_\alpha(\pi)+C_\gamma .
$$
Therefore,
$$
\psi_{\varepsilon,\e,\ell,K}(a)
\ge \frac{|a|-C_\gamma}{\gamma}.
$$
This uniform bound reduces the proof to compact subsets $C$ of $\R$.
For compact $C$, the proof is similar to that of Lemma \ref{lem5-11} and is omitted.
\end{proof}

\section{Proof of Theorem \ref{thm5-2}}\label{sec:replacement}

In this section, we prove Theorem \ref{thm5-2}.
Throughout this section, let $h$ be a local Lipschitz function.

The proof of Theorem \ref{thm5-2} is divided into two main steps.
We first remove the high-density part of the local function in Lemma \ref{lem5-9}.
For the resulting bounded part, Lemma \ref{lem5-2} follows from
the local one-block estimate, the local two-block estimate,
and the global replacement lemma established in Propositions \ref{prop5-3}--\ref{prop5-5}.
The main additional ingredient needed for the high-density cutoff is Lemma \ref{lem5-4},
which provides a super-exponential replacement estimate for the unbounded functional $g(\eta(0))$.

One of the main difficulties is the high-density cutoff.
This step is highly nontrivial due to the lack of a spatial average.
The result is as follows.

\begin{lemma}\label{lem5-9}
For any $0\le t \le T$ and  $\delta>0$, we have
\begin{align*}
\varlimsup_{K\to\infty}\varlimsup_{N\to\infty}
\dfrac{1}{N}\log \bb P_\alpha^N\left( \left| \int_0^t \left[ h(\eta_s) - h_K(\eta_s) \right] ds \right| \ge\delta\right)  = -\infty .
\end{align*}
\end{lemma}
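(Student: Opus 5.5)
Since $h$ is Lipschitz and supported on a finite set $A$, we have pointwise
\begin{align*}
|h(\eta)-h_K(\eta)| \le C_0\Big(1+\sum_{x\in A}\eta(x)\Big)\mathbf 1\Big\{\max_{x\in A}\eta(x)>K\Big\}
\le C_0'\sum_{x\in A}\eta(x)\mathbf 1\{\eta(x)>K/|A|'\}\,,
\end{align*}
after possibly enlarging constants and using that on the event $\max_{x\in A}\eta(x)>K$ the constant $1$ is dominated by $\eta(x)/K$ at the maximizing site. We therefore reduce, by a union bound over the finitely many $x\in A$ and translation invariance of $\bb P_\alpha^N$, to showing
\begin{align*}
\varlimsup_{K\to\infty}\varlimsup_{N\to\infty}\frac1N\log\bb P_\alpha^N\Big(\int_0^t \eta_s(0)\mathbf 1\{\eta_s(0)>K\}ds\ge\delta\Big)=-\infty .
\end{align*}
By \eqref{2-1} and \eqref{M}, $\eta(0)\mathbf 1\{\eta(0)>K\}\le c_0^{-1}g(\eta(0))\mathbf 1\{\eta(0)>K\}$, so it suffices to control the time integral of $g_K^c(\eta(0)):=g(\eta(0))\mathbf 1\{\eta(0)>K\}$.

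The plan is to use Lemma~\ref{lem5-4} (the super-exponential replacement for the unbounded functional $g(\eta(0))$, stated earlier in the section's outline) to replace $\int_0^t g(\eta_s(0))ds$ by a spatially averaged quantity, and similarly for the truncated part $g_K(\eta(0))$ via the bounded-function replacement (Lemma~\ref{lem5-2} / Propositions~\ref{prop5-3}--\ref{prop5-5}). Writing $g=g_K+g_K^c$, we get that, super-exponentially,
\begin{align*}
\int_0^t g_K^c(\eta_s(0))ds \approx \int_0^t \frac{1}{\e N}\sum_{x=1}^{\e N}\big[\Phi-\overline\Phi_{\ell,K}\big](\eta_s^{\varepsilon N}(x))ds ,
\end{align*}
which is a functional $aJ_{\varepsilon,\e,\ell,K}$ of the empirical measure. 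Then by the exponential Chebyshev inequality with parameter $aN$, the Feynman--Kac/Varadhan upper bound from the empirical-measure LDP, and Proposition~\ref{prop5-6}, the log-probability is bounded by $-a\delta+\sup_{\pi\in\Omega_T^{I_\alpha}}\{aJ_{\ell,K}(\pi)-I_\alpha(\pi)\}$. Since $\Phi-\overline\Phi_{\ell,K}$ tends to $\Phi-\Phi_K$ uniformly on compacts as $\ell\to\infty$ (Proposition~\ref{prop5-1}) and $\Phi-\Phi_K$ is small on $[0,A]$ and bounded by $C\rho$ elsewhere, Proposition~\ref{prop5-2} controls the large-density part: $\sup_\pi\{aJ_{\ell,K}(\pi)-I_\alpha(\pi)\}\to0$ as $\ell\to\infty$ then $K\to\infty$. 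Letting $a\to\infty$ gives $-\infty$.

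The main obstacle is Lemma~\ref{lem5-4} itself: a super-exponential replacement for the unbounded $g(\eta(0))$ with no spatial averaging on the left. I would prove it via Feynman--Kac, bounding the largest eigenvalue of $N^2\mathcal L_N+ aN V$ by the variational formula $\sup_f\{aN\langle V,f\rangle_{\nu_\alpha}-N^2D(\sqrt f)\}$. Here the key point is that $g(\eta(0))-g(\eta(1))$ is a gradient: one writes $g(\eta(0))-\Phi(\eta^{\varepsilon N}(0))$ as a telescoping sum of $g(\eta(x))-g(\eta(x+1))$ plus a block term, and each gradient is handled by the change of variables $\eta\mapsto\eta^{x,x+1}$ giving $\langle g(\eta(x))-g(\eta(x+1)),f\rangle\le \gamma D_{x,x+1}(\sqrt f)+\gamma^{-1}\langle g(\eta(x))+g(\eta(x+1)),f\rangle$. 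The diffusive factor $N^2$ beats the $N\cdot\varepsilon N$ telescoping cost in one dimension, and the residual $\langle g,f\rangle$ terms are controlled by the entropy/mass bound under $\nu_\alpha$ (exponential moments of $\eta(0)$). This one-dimensional ``$N^2$ versus $N$'' counting, with care for unbounded $g$, is where I expect the real work.
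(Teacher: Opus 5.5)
Your reduction and your treatment of the remainder match the paper's proof. The paper also dominates $h-h_K$ by $\sum_{x\in\supp h}[g(\eta(x))-g_K(\eta(x))]$, splits $g-g_K=V_2-V_3+V_4$, uses Lemma~\ref{lem5-2} for the bounded part, and handles $V_4$ by exponential Chebyshev and Varadhan's lemma with Propositions~\ref{prop5-6}, \ref{prop5-1} and \ref{prop5-2}; this is how it proves Lemma~\ref{lem5-3} and hence Lemma~\ref{lem5-9}. Your telescoping and change-of-variables bound for Lemma~\ref{lem5-4} is also essentially the paper's \eqref{5-5}: its chain $\eta^{0,y}\to\eta^{0,y+1}$ is the bond $(y,y+1)$ after substituting $\xi=\eta^{0,y}$.

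The gap is the one step you leave as a phrase, which is exactly where the unboundedness of $g$ matters. Absorb the Dirichlet form by taking $\gamma$ of order $N/a$. What remains in the Feynman--Kac variational problem is a term of order $a^2\e\int g^{\e N}f\,d\nu_\alpha$, where $g^{\e N}$ is the average of $g(\eta(x))$ over $0\le x\le\e N$. Its supremum over densities $f$ is $+\infty$. Neither bound you invoke repairs this at that stage:
\begin{itemize}
\item no entropy of $f$ appears in the variational formula;
\item mass conservation only gives $\int g^{\e N}f\,d\nu_\alpha\lesssim\pi_0^N(\T)/\e$ on a fixed-mass sector. This is sharp, since $f$ may concentrate all particles in the block. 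The $\e$ therefore cancels and an error of order $a^2\pi_0^N(\T)$ survives.
\end{itemize}

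The missing idea is to insert a penalty on path space, before applying Feynman--Kac. Write $\widetilde V_2=g(\eta(0))-\frac{1}{\e N}\sum_{x=1}^{\e N}g(\eta(x))$. By H\"older, bound $\bb E_\alpha^N[\exp\{aN\int_0^t\widetilde V_2(\eta_s)\,ds\}]$ by the product of the square roots of $\bb E_\alpha^N[\exp\{N\int_0^t(2a\widetilde V_2-T^{-1}\e\,\omega(g^{\e N}))(\eta_s)\,ds\}]$ and $\bb E_\alpha^N[\exp\{T^{-1}\e N\int_0^t\omega(g^{\e N}(\eta_s))\,ds\}]$. Here $\omega$ is convex and superlinear with $E_{\nu_\alpha}[e^{\omega(g(\eta(0)))}]<\infty$.
\begin{itemize}
\item The second factor is at most $e^{(\e N+1)r}$, by Jensen in time, stationarity of $\nu_\alpha$, convexity of $\omega$ and the product structure. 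This costs only order $\e$ at speed $N$.
\item In the first factor, Jensen gives $\int\omega(g^{\e N})f\,d\nu_\alpha\ge\omega(\int g^{\e N}f\,d\nu_\alpha)$. The variational problem then reduces to $\sup_{x,y\ge0}\{8a\sqrt{\e xy}-x-T^{-1}\e\,\omega(y)\}=\e\sup_{y\ge0}\{16a^2y-\omega(y)/T\}=O(\e)$.
\end{itemize}
Since $g(\eta(0))$ has all exponential moments under $\nu_\alpha$, a linear penalty $\beta\e g^{\e N}$ with $\beta$ a large multiple of $a^2$ would also work. With this step supplied, your argument is the paper's.
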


To prove Lemma \ref{lem5-9}, we reduce the problem to the case where $h(\eta)=g(\eta(0))$.
To see this, we observe that
\begin{align*}
\left| h(\eta)\right |  \le  |h(\mathfrak d)| + C_0 \sum_{x\in\supp h}\eta(x),
\end{align*}
where $\mathfrak{d}$ denotes the configuration defined by $\mathfrak{d}(x):=0$ for all $x$.
Indeed, letting $\eta_{\max} :=  \max\{\eta(x):x\in\supp h\}$, we have
\begin{align*}
\left| r_{K}^{h}(\eta) \right| &  =  \left| h(\eta) \right|  \mathbf{1}\left\{ \eta(x)>K\text{ for some }x\in\supp h\right\} \\
 &  \le  C_0\left[\sum_{x\in\supp h}\eta(x)\right]\mathbf{1}\left\{ \eta_{\max}>K\right\} \\
 &  \le  C_0 |\supp h| \eta_{\max}\mathbf{1}\left\{ \eta_{\max}>K\right\} \\
 &  \le  C_1g(\eta_{\max})\mathbf{1}\left\{ \eta_{\max}>K\right\} \\
 &  =  C_1\left[ g(\eta_{\max})- g_K(\eta_{\max}) \right] \\
 &  \le   C_1\sum_{x\in\supp h} \left[ g(\eta(x)) - g_K(\eta(x))\right] .
\end{align*}
Therefore, to prove Lemma \ref{lem5-9}, it is enough to show the following lemma.

\begin{lemma}\label{lem5-3}
For any $x\in\Z$, $0\le t\le T$, and $\delta>0$, we have
\begin{align*}
\varlimsup_{K\to\infty}\varlimsup_{N\to\infty}
\dfrac{1}{N}\log \bb P_\alpha^N\left( \int_0^t \left[ g(\eta_s(x)) - g_K(\eta_s(x)) \right] ds \ge\delta\right)  = -\infty .
\end{align*}
\end{lemma}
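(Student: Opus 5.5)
By translation invariance of $\nu_\alpha^N$ and of the dynamics, it suffices to take $x=0$. The obstruction is the unboundedness of $g(\eta(0))-g_K(\eta(0)) = g(\eta(0))\mathbf 1\{\eta(0)>K\}$. The plan is to trade this single-site functional for a spatially averaged one at super-exponential cost, and then control the averaged one by the large-deviation mass bounds from Section \ref{sec:ratefunction}. I would use Lemma \ref{lem5-4}, the super-exponential replacement for $g(\eta(0))$ announced at the beginning of this section, as the basic tool. It lets me replace $\int_0^t g(\eta_s(0))ds$ by $\int_0^t \frac{1}{\e N}\sum_{y=1}^{\e N}\Phi(\eta_s^{\varepsilon N}(y))ds$, up to an error that exceeds $\delta$ only with super-exponentially small probability, uniformly as $N\to\infty$, $\varepsilon\to0$, $\e\to0$.

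The first step is to split the cut-off part. Write $g(\eta(0))\mathbf 1\{\eta(0)>K\} = g(\eta(0)) - g_K(\eta(0))$. Here $g_K$ is bounded by $C_0K$, so the bounded part falls under the one-block/two-block machinery (Propositions \ref{prop5-3}--\ref{prop5-5}, as in Lemma \ref{lem5-2}). This yields a super-exponential replacement of $\int_0^t g_K(\eta_s(0))ds$ by $\int_0^t \frac{1}{\e N}\sum_y \overline\Phi_{\ell,K}(\eta_s^{\varepsilon N}(y))ds$. Subtracting, I get the following bound with super-exponentially small error:
\[
\int_0^t [g-g_K](\eta_s(0))ds \approx \int_0^t \frac{1}{\e N}\sum_{y=1}^{\e N}\bigl[\Phi-\overline\Phi_{\ell,K}\bigr](\eta_s^{\varepsilon N}(y))\,ds ,
\]
which is $J_{\varepsilon,\e,\ell,K}(\pi^N)$ evaluated at the empirical measure (up to the time horizon $t$ instead of $T$, handled identically).

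The second step controls this macroscopic functional. By the exponential Chebyshev inequality and Varadhan's lemma applied to the LDP for $\bb Q^N_\alpha$, for every $a>0$ we have
\[
\varlimsup_{N\to\infty}\frac1N\log\bb P_\alpha^N\bigl(J_{\varepsilon,\e,\ell,K}(\pi^N)\ge\delta\bigr)\le -a\delta+\sup_{\pi}\{aJ_{\varepsilon,\e,\ell,K}(\pi)-I_\alpha(\pi)\}.
\]
Continuity of $J_{\varepsilon,\e,\ell,K}$ in $\pi$ holds for fixed $\varepsilon,\e$ because $\iota_\varepsilon$ is an indicator, so a mollified version may be needed. The linear growth of $\Phi-\overline\Phi_{\ell,K}$ requires an exponential moment bound for the total mass, which is immediate under $\nu_\alpha^N$ since mass is conserved. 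Proposition \ref{prop5-6} then lets me send $\varepsilon\to0$ and $\e\to0$, leaving $\sup_{\pi}\{aJ_{\ell,K}(\pi)-I_\alpha(\pi)\}$. Next I split $\Phi-\overline\Phi_{\ell,K}$ on $\{\rho(t,0)<A\}$ and $\{\rho(t,0)\ge A\}$:
\begin{itemize}
\item On the first set, Proposition \ref{prop5-1} (applied with $g$) gives uniform smallness as $\ell\to\infty$ and then $K\to\infty$.
\item On the second set, $|\Phi-\overline\Phi_{\ell,K}|\le C\rho$, so Proposition \ref{prop5-2} makes $\sup\{aC\int\rho\mathbf 1\{\rho\ge A\}-\tfrac12 I_\alpha\}$ vanish as $A\to\infty$.
\end{itemize}
Hence the supremum tends to $0$ in the limits $\ell,K\to\infty$. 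Letting then $a\to\infty$ gives $-\infty$. Since the original event does not depend on $\varepsilon,\e,\ell$, I may take limits in those parameters freely before $K\to\infty$.

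The main obstacle is Lemma \ref{lem5-4} itself, i.e. the super-exponential replacement of the unbounded $g(\eta(0))$ by its spatial average. I would prove it via the Feynman--Kac bound with the Dirichlet form $N^2 D(f)$ and speed $N$. The key identity is that $g(\eta(0))-g(\eta(1))$ is a discrete gradient in the Dirichlet-form sense: $E_{\nu_\alpha}[(g(\eta(0))-g(\eta(1)))f] = E_{\nu_\alpha}[g(\eta(0))(f(\eta)-f(\eta^{0,1}))]$ up to symmetrization. Telescoping from $0$ to $y\le \varepsilon N$ and using Cauchy--Schwarz, $\langle \sqrt f,\cdot\rangle$ is bounded by $\sqrt{D(\sqrt f)}\,(\varepsilon N\cdot E[g f])^{1/2}$. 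Then $N\cdot(\text{that})\le N^2D + \varepsilon N\, E[g f]$, and the remaining term is handled by a crude entropy bound on the total mass. The subtle point is that $E[gf]$ is unbounded, so it must again be controlled by mass, using $g\le C_0\eta$ and the variational bound for $\langle f,\sum\eta\rangle$.
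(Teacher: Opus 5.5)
Your proof is correct and is essentially the paper's: you use the decomposition $g(\eta(0))-g_K(\eta(0))=V_2-V_3+V_4$, Lemmas \ref{lem5-4} and \ref{lem5-2} for the two replacement pieces, and exponential Chebyshev plus Varadhan's lemma with Propositions \ref{prop5-6}, \ref{prop5-1} and \ref{prop5-2} for the macroscopic remainder. (The paper also reduces to $t=T$ using $g-g_K\ge0$.) Lemma \ref{lem5-4} is proved independently in the paper, so your closing sketch is not needed; if you do reprove it, the Feynman--Kac supremum by itself gives no control of $\int f\sum\eta\,d\nu_\alpha$, so the leftover $\e\int g^{\e N}f\,d\nu_\alpha$ must be absorbed by a H\"older split against an exponential moment of the mass in the $\e N$-box (the paper uses the superlinear $\omega$), because penalizing the total mass instead costs $O(1)$ rather than $O(\e)$ at speed $N$.
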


We will prove Lemma \ref{lem5-3} at the end of this section.

To complete the proof of Theorem \ref{thm5-2}, we show the following lemma.

\begin{lemma}\label{lem5-2}
Fix $K\in\N$. For any $0\le t \le T$ and  $\delta>0$, we have
\begin{align*}
\varlimsup_{\ell\to\infty}\varlimsup_{\e\to0}\varlimsup_{\varepsilon\to0}\varlimsup_{N\to\infty}
\dfrac{1}{N}\log \bb P_\alpha^N\left( \left| \int_0^t V_1(\eta_s) ds \right| \ge\delta\right)  = -\infty ,
\end{align*}
where
\begin{align*}
V_1(\eta) \equiv  V_{1,N,\varepsilon,\e,\ell,K}(\eta) :=  h_K(\eta) - \dfrac{1}{\e N}\sum_{x=1}^{\e N} \overline H_{\ell,K}(\eta^{\varepsilon N}(x)) .
\end{align*}
\end{lemma}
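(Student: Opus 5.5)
We follow the scheme announced at the start of this section: split $V_1$ into three pieces and treat each with one of Propositions \ref{prop5-3}--\ref{prop5-5}. Since $K$ is fixed, $h_K$ is a bounded local function, so no cutoff is needed. We write
\begin{align*}
V_1(\eta) = \bigl[h_K(\eta) - H_{\ell,K}(\eta)\bigr]
+ \Bigl[H_{\ell,K}(\eta) - \dfrac{1}{\e N}\sum_{x=1}^{\e N} H_{\ell,K}(\tau_x\eta)\Bigr]
+ \dfrac{1}{\e N}\sum_{x=1}^{\e N}\Bigl[H_{\ell,K}(\tau_x\eta) - \overline H_{\ell,K}(\eta^{\varepsilon N}(x))\Bigr].
\end{align*}
Here $H_{\ell,K}(\tau_x\eta)=H_K(\eta^\ell(x))$. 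By the elementary bound $\mathbb P(|A+B+C|\ge\delta)\le \sum \mathbb P(|\cdot|\ge\delta/3)$, it suffices to prove the super-exponential estimate for each of the three time integrals separately.

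The first term is handled by the local one-block estimate. The function $h_K$ is bounded and Lipschitz, and $H_K$ is its equilibrium expectation, so $h_K - H_K(\eta^\ell(0))$ is the standard one-block quantity at the fixed site $0$. At speed $N$, with no spatial averaging, the local version (Proposition \ref{prop5-3}) should give $\varlimsup_{\ell}\varlimsup_N N^{-1}\log\mathbb P(\cdot)=-\infty$. The mechanism is the usual one: Feynman--Kac plus a variational formula. Because the Dirichlet form carries the factor $N^2$ while the exponential tilt is only of order $N$, the cost of deviating from local equilibrium on a box of fixed size $\ell$ around the origin is $N\cdot(\text{large})$. Boundedness of $h_K$ and $H_K$ avoids integrability issues, and the high-density part of the box is controlled as in the usual zero-range one-block argument, since $H_K$ is bounded.

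The second term compares the $\ell$-block functional at $0$ with its average over $x\in\{1,\dots,\e N\}$. This is the local two-block estimate (Proposition \ref{prop5-4}). It bounds the time integral of $H_K(\eta^\ell(0))-H_K(\eta^\ell(x))$ uniformly over $|x|\le\e N$ at cost $\to-\infty$ as $\e\to0$, by paying $N^2\cdot(\e N)^{-1}\cdot\mathrm{dist}^2$ in Dirichlet form along a path of length at most $\e N$. Since $H_K$ is Lipschitz and bounded, the difference reduces to comparing the particle densities. The third term replaces the microscopic block $\eta^\ell(x)$ inside $H_K$ by the equilibrium expectation $\overline H_{\ell,K}$ evaluated at the macroscopic average $\eta^{\varepsilon N}(x)$. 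This is the global replacement lemma (Proposition \ref{prop5-5}), the usual one-block/two-block super-exponential estimate averaged over $\e N$ sites. There the averaging over $\e N$ sites supplies the extra factor needed at speed $N$, with $\varepsilon\to0$ taken first and then $N\to\infty$.

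The main obstacle is the first two (local) terms, because they lack spatial averaging. The usual speed-$N^d$ super-exponential estimates are unavailable, and one needs estimates at speed $N$ localized at a single site. Once Propositions \ref{prop5-3}--\ref{prop5-5} are available, however, the remaining work is bookkeeping. We must check that the order of limits in the statement ($N\to\infty$, then $\varepsilon\to0$, $\e\to0$, $\ell\to\infty$) matches the orders in those propositions, and that the union-bound step commutes with the iterated $\varlimsup$. Both hold because each piece's bound depends only on the parameters it involves. For the global term we also use that $\overline H_{\ell,K}$ is Lipschitz, so that replacing the average over $\{1,\dots,\e N\}$ by a centered one costs nothing.
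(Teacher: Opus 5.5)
Your proposal is correct and follows the paper's proof: Lemma~\ref{lem5-2} is obtained from the same three-term telescoping decomposition of $V_1$, a union bound, and Propositions~\ref{prop5-3}, \ref{prop5-4} and \ref{prop5-5} (local one-block, local two-block and global replacement), with the iterated limits handled as you describe. One wording slip: in your paragraph on the third term the innermost limit is $N\to\infty$ and $\varepsilon\to0$ is taken afterwards, which is the order you state correctly in your last paragraph and the order in Proposition~\ref{prop5-5}.
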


The proof of Lemma \ref{lem5-2} is a consequence of the following propositions,
which are the local one-block estimate, local two-block estimate, and global replacement lemma, respectively.

\begin{proposition}\label{prop5-3}
Fix $K\in\N$. For any $0\le t \le T$ and  $\delta>0$, we have
\begin{align*}
\varlimsup_{\ell\to\infty}\varlimsup_{N\to\infty}
\dfrac{1}{N}\log \bb P_\alpha^N\left( \left| \int_0^t \left[ h_K(\eta_s) - H_{\ell,K}(\eta_s) \right] ds \right| \ge\delta\right)  = -\infty .
\end{align*}
\end{proposition}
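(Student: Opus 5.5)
This is a local one-block estimate at speed $N$. Since $h_K$ is bounded, we avoid the full one-block estimate, which costs speed $N$ in dimension one for a single site. Instead we use the Feynman--Kac / Kipnis--Varadhan variational approach. By the exponential Chebyshev inequality it suffices to treat $\int_0^t W(\eta_s)\,ds\ge\delta$ and its negative separately, where
\[
W:=h_K-H_{\ell,K}.
\]
For any $a>0$,
\begin{align*}
\frac1N\log\bb P_\alpha^N\Big(\int_0^t W(\eta_s)ds\ge\delta\Big)\le -a\delta+\frac1N\log\bb E_\alpha^N\Big[\exp\Big(aN\int_0^t W(\eta_s)ds\Big)\Big].
\end{align*}
Since $\nu_\alpha$ is reversible, the Feynman--Kac formula bounds the last term by $t\,\Gamma_N$, where
\[
\Gamma_N:=\sup_f\Big\{ a\,E_{\nu_\alpha}[W f]-N^{-1}D_N(\sqrt f)\Big\}.
\]
Here the supremum runs over densities $f$ with respect to $\nu_\alpha$. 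The Dirichlet form carries the $N^2$ factor, so $N^{-1}D_N(\sqrt f)=N\,\mf D(\sqrt f)$, with $\mf D$ the unscaled form. It then suffices to show $\varlimsup_{\ell}\varlimsup_N\Gamma_N\le 0$ for each fixed $a$, and afterwards let $a\to\infty$.

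The key step is to localize. $W$ depends only on sites in $\Lambda_\ell:=\supp h\cup\{|x|\le \ell\}$, a box of size about $2\ell+|\supp h|$, fixed as $N\to\infty$. So $E_{\nu_\alpha}[Wf]=E_{\nu_\alpha}[W f_{\Lambda}]$, where $f_\Lambda$ is the conditional expectation of $f$ on $\Lambda:=\Lambda_\ell$. Convexity of the Dirichlet form gives $\mf D_\Lambda(\sqrt{f_\Lambda})\le \mf D(\sqrt f)$. Hence, in any term with positive weight,
\[
N\,\mf D_\Lambda(\sqrt{f_\Lambda})\le C_0 \quad\Longrightarrow\quad \mf D_\Lambda(\sqrt{f_\Lambda})\le C_0/N\to0,
\]
because $a\|W\|_\infty\le 2a\|h_K\|_\infty$ bounds the competing term. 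The problem thus reduces to showing
\begin{align*}
\varlimsup_{\ell\to\infty}\varlimsup_{N\to\infty}\sup\Big\{E_{\nu_\alpha}[W f_\Lambda]: \mf D_\Lambda(\sqrt{f_\Lambda})\le C_0/N\Big\}\le0.
\end{align*}
This is exactly the structure of the standard one-block argument, but on a box of fixed size. Unlike the hydrodynamic one-block estimate, no translation averaging is needed: the Dirichlet form bound is already $O(1/N)$ on a single fixed box.

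Next we pass to the limit $N\to\infty$ at fixed $\ell$, and here the unboundedness of the state space is the main obstacle. The set of densities on $\N^{\Lambda}$ is not compact. To handle it, decompose $f_\Lambda$ along the hyperplanes $\{\sum_{x\in\Lambda}\eta(x)=j\}$. Because $W$ is bounded by $2\|h_K\|_\infty$, the contribution of hyperplanes with $j>(2\ell+1)M$ is controlled by their $f_\Lambda$-mass. We cannot bound that mass uniformly over $f$ directly. Instead, we include it in the variational problem: we add and subtract $\gamma\sum_{x\in\Lambda}\eta(x)$ and use entropy-type bounds coming from the equilibrium exponential moments to show that large total mass is penalized. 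This step is the part I expect to be the most delicate. A cleaner alternative is to note that $H_{\ell,K}$ and $h_K$ are both bounded, and that $W\mathbf 1\{\eta^\ell(0)>M\}$ tends to zero in the limit $\ell\to\infty$ then $M\to\infty$ under canonical measures, using $\sup_j$ bounds. On each finite hyperplane, compactness together with lower semicontinuity of $\mf D_\Lambda$ shows that limits satisfy $\mf D_\Lambda(\sqrt{f})=0$. So the limit is a mixture of canonical measures $\nu_{\Lambda,j}$, and the supremum is bounded by
\[
\sup_{j\le (2\ell+1)M}E_{\nu_{\Lambda,j}}\big[h_K-H_K(j/(2\ell+1))\big].
\]

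Finally, the equivalence of ensembles (local central limit theorem for the zero-range marginals, uniform over densities in $[0,M]$) gives
\[
E_{\nu_{\Lambda,j}}[h_K]-H_K\big(j/(2\ell+1)\big)\to0
\]
uniformly in $j\le(2\ell+1)M$ as $\ell\to\infty$, since $h_K$ is a bounded local function. Combined with the tail control in $M$, this yields $\varlimsup_\ell\varlimsup_N\Gamma_N\le 0$ for every $a$. Therefore the limit in the statement is at most $-a\delta$ for every $a>0$, which proves the claim. The same argument applied to $-W$ handles the absolute value.
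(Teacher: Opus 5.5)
Your skeleton (exponential Chebyshev, Feynman--Kac with the Dirichlet form enhanced by $N$, localization to $\Lambda_\ell$, decomposition into hyperplanes $\Sigma_{\Lambda_\ell,j}$, equivalence of ensembles) matches the paper's. However, the step you flag as delicate is a genuine gap, and neither of your suggestions closes it.

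At fixed $\ell$ there are infinitely many hyperplanes, so compactness and lower semicontinuity only handle $j\le(2\ell+1)M$. For the remaining $j$ you need an estimate, uniform in $j$, saying that a density $f_j$ on $\Sigma_{\Lambda_\ell,j}$ with Dirichlet form $O(1/N)$ satisfies $E_{\nu_{\Lambda_\ell,j}}[Wf_j]\approx E_{\nu_{\Lambda_\ell,j}}[W]$.

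\begin{itemize}
\item Your ``cleaner alternative'' controls only the canonical means $E_{\nu_{\Lambda_\ell,j}}[W]$, which say nothing about such $f_j$. Nor is $W$ pointwise small on high hyperplanes: $H_K(j/|\Lambda_\ell|)\to0$, but $h_K=h$ on configurations where $\supp h$ is (nearly) empty and the particles sit elsewhere in the box.
\item Your first suggestion fails as stated. The Feynman--Kac variational formula contains no entropy term. If you insert a penalty $\gamma\sum_{x\in\Lambda}\eta(x)$ via H\"older at the process level, static exponential moments cannot bound the compensating factor at this speed, because $N^{-1}\log E_{\nu_\alpha}[e^{\gamma N\eta(0)}]\to\infty$ (for $g(k)=k$ it equals $\alpha(e^{\gamma N}-1)/N$). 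You would need a dynamical estimate of the type of Lemma \ref{lem5-4}.
\item Recasting the problem with the constraint $\mf D_\Lambda(\sqrt{f_\Lambda})\le C_0/N$ also discards structure. That constraint bounds only the weighted average of the hyperplane Dirichlet forms, not each one.
\end{itemize}

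The paper needs no tail control at all. It keeps the penalized form, so the localized functional is a convex combination over hyperplanes. Hence it is bounded by $\sup_{j\ge0}$ of the same problem on $\Sigma_{\Lambda_\ell,j}$ with respect to $\nu_{\Lambda_\ell,j}$.

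After subtracting the canonical mean, the mean-zero part is controlled by the Rayleigh estimate together with the spectral gap of $L_{\Lambda_\ell}$ of order $\ell^{-2}$, uniform in $j$. This is the Landim--Sethuraman--Varadhan bound, and it is exactly where \eqref{LG} and \eqref{M} enter. It gives $aC_0\ell^2/N$ uniformly in $j$.

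The remaining term $\sup_{j\ge0}|E_{\nu_{\Lambda_\ell,j}}[h_K]-H_K(j/|\Lambda_\ell|)|$ does not depend on $N$. It vanishes as $\ell\to\infty$ by equivalence of ensembles uniform over all $j$: a Lipschitz bound for $j/|\Lambda_\ell|\le\gamma$, and the local limit estimate together with \eqref{2-5} for $j/|\Lambda_\ell|\ge\gamma$.

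The uniform-in-$j$ spectral gap is the ingredient missing from your argument.
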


\begin{proposition}\label{prop5-4}
Fix $K,\ell\in\N$. For any $0\le t \le T$ and  $\delta>0$, we have
\begin{align*}
\varlimsup_{\e\to0}\varlimsup_{N\to\infty}
\dfrac{1}{N}\log \bb P_\alpha^N
\left( \left| \int_0^t \left[ H_{\ell,K}(\eta_s) - \dfrac{1}{\e N}\sum_{x=1}^{\e N} H_{\ell,K}(\tau_x\eta_s)  \right] ds \right| \ge\delta\right)  = -\infty .
\end{align*}
\end{proposition}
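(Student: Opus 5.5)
The plan is the standard route from a super-exponential estimate to a variational problem, followed by a multiscale replacement; the multiscale part is the step I am least sure of.

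Write $F:=H_{\ell,K}$. Since $K$ is fixed, $F$ is a bounded local function with $\|F\|_\infty\le \|h_K\|_\infty$, and it depends only on the sites $\{-\ell,\dots,\ell\}$. Set
\begin{align*}
V_\e(\eta):=F(\eta)-\dfrac{1}{\e N}\sum_{x=1}^{\e N}F(\tau_x\eta)=\dfrac{1}{\e N}\sum_{x=1}^{\e N}\left[F(\eta)-F(\tau_x\eta)\right].
\end{align*}
First, by the exponential Chebyshev inequality with parameter $\gamma N$ and the Feynman--Kac formula (with $\nu_\alpha$ invariant and $L_N$ carrying the factor $N^2$),
\begin{align*}
\dfrac1N\log\bb P_\alpha^N\left(\int_0^t V_\e(\eta_s)ds\ge\delta\right)\le -\gamma\delta+t\sup_{f}\left\{\gamma\lan V_\e,f\ran_{\nu_\alpha}-N D_N(\sqrt f)\right\},
\end{align*}
where the supremum is over densities $f$ with respect to $\nu_\alpha$, and $D_N$ is the Dirichlet form without the $N^2$ factor. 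The same bound holds for $-V_\e$. Since $\gamma$ is arbitrary, it suffices to show that for every $\gamma>0$,
\begin{align*}
\varlimsup_{\e\to0}\varlimsup_{N\to\infty}\sup_f\left\{\gamma\lan V_\e,f\ran_{\nu_\alpha}-N D_N(\sqrt f)\right\}\le 0 .
\end{align*}

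Second, I localize. $V_\e$ depends only on the box $\Lambda:=\{-\ell,\dots,\e N+\ell\}$. By convexity of the Dirichlet form, $D_\Lambda(\sqrt{f_\Lambda})\le D_N(\sqrt f)$, where $f_\Lambda$ is the conditional expectation of $f$ on $\N^\Lambda$. Next I decompose $f_\Lambda$ along the hyperplanes $\{\sum_{\Lambda}\eta=j\}$, on which $\nu_\alpha$ restricts to the canonical measure $\nu_{\Lambda,j}$. Under $\nu_{\Lambda,j}$ the law of $\eta|_\Lambda$ is exchangeable, and $F(\tau_x\cdot)$ is supported inside $\Lambda$ for $1\le x\le\e N$. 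Hence $E_{\nu_{\Lambda,j}}[V_\e]=0$ for every $j$. No large-density truncation is needed here because $V_\e$ is bounded uniformly in $j$.

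Third, and this is the main obstacle, I must bound $\gamma\lan V_\e,f\ran_{\nu_{\Lambda,j}}-N D_\Lambda(\sqrt f)$ uniformly in $j$. A one-shot perturbation argument fails. The spectral gap of the zero-range process on $\Lambda$ is of order $(\e N)^{-2}$ uniformly in the density, by Landim--Sethuraman--Varadhan under (LG) and (M). That gap is far smaller than the available $\gamma/N$, so the Rayleigh-type expansion is not valid at scale $\e N$. I would try a multiscale telescoping. Put $L_k:=2^k\ell$ for $k=0,\dots,k_\e$ with $L_{k_\e}\approx\e N$, and write $V_\e$ as a sum of differences
\begin{align*}
A_k:=\text{(average of $F$ over translates in a block of size $L_k$)}-\text{(average over a block of size $L_{k+1}$)},
\end{align*}
each treated on its own box with canonical measures.

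For $A_k$ I expect the following. Its canonical variance is $O(1/L_k)$ by exchangeability and decorrelation of block averages, and the spectral gap on its box is of order $L_k^{-2}$. The $H_{-1}$ norm of $A_k$ is therefore at most $C L_k$. Giving step $k$ the share $N_k:=N2^{-(k_\e-k)/2}\big/\sum_i 2^{-i/2}$ of the Dirichlet form (the shares add up to at most $N$), the perturbation lemma yields a contribution of order $\gamma^2 L_k/N_k$. This is valid as long as $\gamma\|A_k\|_\infty L_k^2\ll N_k$, which holds at small scales. Summing over $k$, the small scales contribute $O(\gamma^2\e)$ after choosing the weights geometrically.

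At the largest scales, where $L_k$ is comparable to $\e N$, I would not use perturbation theory. Instead I would use the direct bound $\lan A_k,f\ran\le \|A_k\|_{-1}\,(\ldots)$ through integration by parts along particle paths (the moving-particle lemma), with the factor $\e$ coming from $L_k/N\le\e$. I expect making this large-scale step work uniformly in $j$ to be the real difficulty. If it fails, a fallback is to compare with the block $\eta^{\varepsilon N}$ via the global replacement of Proposition \ref{prop5-5}. Finally, letting $N\to\infty$ and then $\e\to0$ gives the claim.
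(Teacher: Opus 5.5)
There is a genuine gap, and it sits where you suspected: Step 3 is never completed. Steps 1 and 2 are fine. The trouble is that by localizing the whole average $V_\e$ on a box of size $\e N$, you commit yourself to a spectral gap of order $(\e N)^{-2}$. In your multiscale scheme the perturbative condition $\gamma\|A_k\|_\infty L_k^2\ll N_k$ fails as soon as $L_k\gtrsim\sqrt N$, because $\|A_k\|_\infty$ is of order one and $N_k\le N$. With your geometric shares it already fails near $L_k\sim N^{1/3}$. So the non-perturbative regime is not only the scales comparable to $\e N$; it is the whole range from roughly $N^{1/3}$ up to $\e N$, and there you give no estimate. Moreover, an $H_{-1}$ bound controls $\lan A_k,f\ran$ by $D(f)^{1/2}$, not by $D(\sqrt f)^{1/2}$. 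It therefore does not plug into the variational problem without sup-norm information on $(-L_\Lambda)^{-1}A_k$. Your fallback through Proposition \ref{prop5-5} is circular. That result only replaces spatial averages $\frac1{\e N}\sum_x H_{\ell,K}(\tau_x\eta)$ by functions of $\eta^{\varepsilon N}$. It says nothing about the unaveraged term $H_{\ell,K}(\eta)$, which is exactly what the present proposition must handle.

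The missing idea is that no large box is needed at all. Since $H_{\ell,K}$ depends only on $\{-\ell,\dots,\ell\}$, bound the supremum of the average by the average of suprema. This reduces the problem to $\sup_{2\ell+1\le x\le\e N}$ of the variational problem for the single difference $H_{\ell,K}(\eta)-H_{\ell,K}(\tau_x\eta)$; the terms with $x\le 2\ell$ cost $O(\ell/(\e N))$ by boundedness. For fixed $x$, this function lives on two disjoint blocks of size $2\ell+1$ at distance at most $\e N$. Now apply the moving-particle argument: the change of variables \eqref{5-22} plus telescoping along the path, exactly as in the proof of Lemma \ref{lem5-4}. It bounds the Dirichlet form of a long bond from site $\ell$ to site $x-\ell$, with rate $g(\eta(\ell))$, by $x\le\e N$ times $D_N(\sqrt f)$. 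Hence $N D_N(\sqrt f)$ dominates $(2\e)^{-1}$ times the Dirichlet form of the zero-range dynamics on the fixed box $\{-\ell,\dots,3\ell+1\}$ obtained by gluing the two blocks, and the problem becomes independent of $N$. On that box, $H_{\ell,K}(\xi)-H_{\ell,K}(\zeta)$ has mean zero on every canonical hyperplane by exchangeability. The Rayleigh estimate, together with the $\ell^{-2}$ spectral gap uniform in the particle number, then gives the bound $C\gamma^2\ell^2\e$, which vanishes as $\e\to0$. Your own remark that ``the factor $\e$ comes from $L_k/N\le\e$'' is precisely this mechanism. Applied directly to each pair of fixed-size blocks, it makes the multiscale decomposition unnecessary.
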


\begin{proposition}\label{prop5-5}
Fix $K,\ell\in\N$ and $\e>0$. For any $0\le t \le T$ and  $\delta>0$, we have
\begin{align*}
\varlimsup_{\varepsilon\to0}\varlimsup_{N\to\infty}
\dfrac{1}{N}\log \bb P_\alpha^N
\left( \left| \int_0^t  \dfrac{1}{\e N}\sum_{x=1}^{\e N} \left[ H_{\ell,K}(\tau_x\eta_s) - \overline H_{\ell,K}(\eta_s^{\varepsilon N}(x)) \right] ds \right| \ge\delta\right)  = -\infty .
\end{align*}
\end{proposition}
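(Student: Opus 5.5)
The statement to prove is Proposition \ref{prop5-5}, the global replacement lemma. I would follow the standard route of \cite[Chapter 10]{kl99}, adapted to the speed $N$ rather than $N^d=N$ (the speeds coincide in dimension one) and to a sum over a mesoscopic window of $\e N$ sites around the origin.

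Fix $K,\ell,\e$ and write
\[
W_{N,\varepsilon}(\eta):=\frac{1}{\e N}\sum_{x=1}^{\e N}\bigl[H_{\ell,K}(\tau_x\eta)-\overline H_{\ell,K}(\eta^{\varepsilon N}(x))\bigr].
\]
This is a bounded function, since $|H_{\ell,K}|\le \|h_K\|_\infty<\infty$.

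By the exponential Chebyshev inequality and the Feynman--Kac formula, for every $\gamma>0$ we get
\[
\frac1N\log\bb P_\alpha^N\Bigl(\pm\int_0^tW_{N,\varepsilon}(\eta_s)ds\ge\delta\Bigr)\le-\gamma\delta+\frac tN\sup_f\Bigl\{\gamma N\lan W_{N,\varepsilon},f\ran_{\nu_\alpha}-N^2D_N(\sqrt f)\Bigr\}.
\]
Here the supremum runs over densities $f$ with respect to $\nu_\alpha^N$, and $D_N$ is the Dirichlet form without the $N^2$ factor. It therefore suffices to show that
\[
\varlimsup_{\varepsilon\to0}\varlimsup_{N\to\infty}\sup_f\bigl\{\gamma\lan W_{N,\varepsilon},f\ran-N D_N(\sqrt f)\bigr\}\le0
\]
for every $\gamma$, and then to let $\gamma\to\infty$.

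Since $W$ is bounded, only densities with $D_N(\sqrt f)\le C\gamma/N$ matter. This is exactly the Dirichlet-form bound available in the standard hydrodynamic replacement lemma, but here the window has only $\e N$ sites instead of $N$, so the averaging gain is a factor $1/\e$ worse. I would carry out the usual decomposition in three steps.

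First, for $\eta$ with $\eta^{\varepsilon N}(x)$ large, use the boundedness of $H_{\ell,K}$ and $\overline H_{\ell,K}$ to cut off high densities. The relevant event has small probability under $f\nu_\alpha$, which I would control by an entropy inequality. The entropy $H(f\nu_\alpha|\nu_\alpha)$ is not a priori $O(N)$ for an arbitrary $f$, so I would instead compare with the starting measure, for which the entropy is zero, and use attractiveness or the rate function for initial mass.

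Second, perform the one-block step at mesoscopic scale. Split the window into blocks of length $L$ with $\ell\ll L\ll\varepsilon N$, and replace $H_{\ell,K}(\tau_x\eta)$ by its average over a block of size $L$. Then apply the equivalence of ensembles on each block, conditioning on the number of particles, with a Dirichlet-form cost of $C\gamma/N$ distributed over $\e N/L$ blocks.

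Third, perform the two-block step, comparing block densities at scale $L$ with $\eta^{\varepsilon N}(x)$. The standard moving-particle lemma gives a cost of order $(\varepsilon N)^2 D_N(\sqrt f)/(\text{number of pairs})$ per pair. With $D_N\le C\gamma/N$ this is of order $\varepsilon^2 N\gamma$ when summed over the window, and normalizing by $\e N$ sites gives a bound of order $\varepsilon^2\gamma/\e$, which vanishes as $\varepsilon\to0$ with $\e$ fixed. This is the reason the order of limits $\varepsilon\to0$ before $\e\to0$ is used.

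I expect the main obstacle to be the two-block step with the limited window. The total Dirichlet budget is only $O(\gamma/N)$, while the window has size $\e N$, so I must check that the path-counting bound for moving a particle a distance $\varepsilon N$ gives $o(1)$ after normalizing by $\e N$ rather than $N$. Here the one-dimensionality is essential: the mesoscopic block has $\varepsilon N$ sites and the speed is $N$. I would first try to compute the bound with the explicit telescoping path argument of \cite[Lemma 5.5.7]{kl99}. If the counting fails, I would instead bound the variance of $\eta^{\varepsilon N}$ under $f\nu_\alpha$ via a spectral gap on blocks of size $\varepsilon N$, which is of order $(\varepsilon N)^{-2}$ for zero-range processes under \eqref{M} (\cite{lsv96}), together with the cutoff from the first step.
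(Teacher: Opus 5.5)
You are essentially re-deriving the classical super-exponential replacement lemma. The paper does no more than that: it proves Proposition \ref{prop5-5} by citing \cite[Theorem 2]{bkl95}. For fixed $\e$ the window is harmless. Replacing $H_{\ell,K}(\tau_x\eta)$ by $A_x:=(2\varepsilon N+1)^{-1}\sum_{|y-x|\le\varepsilon N}H_{\ell,K}(\tau_y\eta)$ costs at most $C\varepsilon\e^{-1}\|h_K\|_\infty$, deterministically. After that,
\[
\frac{1}{\e N}\sum_{x=1}^{\e N}\bigl|A_x-\overline H_{\ell,K}(\eta^{\varepsilon N}(x))\bigr|\le\frac{1}{\e}\,\frac{1}{N}\sum_{x\in\T_N}\bigl|A_x-\overline H_{\ell,K}(\eta^{\varepsilon N}(x))\bigr|,
\]
so the full-torus lemma with $\delta$ replaced by $\e\delta/2$ gives the claim. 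Your Feynman--Kac reduction to $N D_N(\sqrt f)\le C\gamma$ is correct, and so is your two-block count of order $\varepsilon^2\gamma/\e$. But the window is not the main obstacle you expect.

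The genuine gap is your Step 1, which is exactly where zero range differs from exclusion. Inside the variational formula there is no entropy bound on $f$, and ``comparing with the starting measure'' has no meaning there. Attractiveness is not available either: (LG) and (M) do not make $g$ monotone, and the paper deliberately avoids monotonicity.

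Two ingredients are needed instead. First, the total mass is conserved and $\nu_\alpha(\sum_x\eta(x)>AN)\le e^{-c_AN}$ with $c_A\to\infty$. So you may restrict the dynamics, and the densities $f$, to $\{\sum_x\eta(x)\le AN\}$. On this set, at most a fraction $A/(\e B)$ of the window has $\eta^L(x)>B$ or $\eta^{\varepsilon N}(x)>B$. Since $H_{\ell,K}$ and $\overline H_{\ell,K}$ are bounded, this handles the one-block cutoff and supplies the compactness you need.

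Second, this does not suffice for the two-block step, because $\eta^{\varepsilon N}(x)$ is a linear average of unbounded block densities. Suppose every interval of length $\varepsilon N$ contains one $L$-block carrying $\delta\varepsilon N$ extra particles. Such a configuration is compatible with the total-mass bound and with the indicator-truncated two-block estimate. Yet $\eta^{\varepsilon N}(x)\approx\eta^L(x)+\delta$ for most $x$.

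What you need is a super-exponential, time-integrated uniform-integrability bound for $\eta^L_s$. Stationarity of $\nu_\alpha$ gives one. Take a convex superlinear $\omega$ with $E_{\nu_\alpha}[e^{\omega(\eta(0))}]<\infty$. Jensen's inequality in time and in space then yields
\[
\bb E_\alpha^N\Bigl[\exp\Bigl\{\frac1T\int_0^T\sum_{x\in\T_N}\omega\bigl(\eta_s^L(x)\bigr)\,ds\Bigr\}\Bigr]\le E_{\nu_\alpha}\Bigl[\exp\Bigl\{\sum_{x\in\T_N}\omega\bigl(\eta(x)\bigr)\Bigr\}\Bigr]=E_{\nu_\alpha}\bigl[e^{\omega(\eta(0))}\bigr]^N .
\]
This is the same device as in the proof of Lemma \ref{lem5-4}, and an input of this kind is what the citation of \cite[Theorem 2]{bkl95} covers.

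Your spectral-gap fallback would not help either. On a block of size $\varepsilon N$ the gap $(\varepsilon N)^{-2}$ is far too small against the budget $N D_N(\sqrt f)=O(\gamma)$.
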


Proposition \ref{prop5-5} is a classical replacement lemma and follows from \cite[Theorem 2]{bkl95}.
Propositions \ref{prop5-3} and \ref{prop5-4} are variants of replacement estimates
appearing in \cite{jls09} in the study of nonequilibrium fluctuations
for a tagged particle in the zero-range process.
Since \cite{jls09} is formulated for the environment process,
we briefly indicate the necessary adaptations below.
The boundedness of $h_K$ for fixed $K$ is essential in these arguments.
The main remaining difficulty in the present setting is
to control the error introduced by this cutoff and eventually let $K\to\infty$.

\begin{proof}[Proof of Proposition \ref{prop5-3}]
For each $a>0$, by the exponential Chebyshev inequality, we have
\begin{align*}
&\dfrac{1}{N}\log \bb P_\alpha^N\left( \left| \int_0^t \left[ h_K(\eta_s) - H_{\ell,K}(\eta_s) \right] ds \right| \ge\delta\right)\\
&\quad \le-a\delta + \dfrac{1}{N}\log \bb E_\alpha^N\left[ \exp \left\{ aN\left| \int_0^t \left[ h_K(\eta_s) - H_{\ell,K}(\eta_s) \right] ds \right| \right\} \right].
\end{align*}
We use the inequality $e^{|x|}\le e^x+e^{-x}$ to get rid of the absolute value and
we give a bound only for the positive term.
The same argument applies to the negative term. (This remark is used without further comment in the rest of this paper.)
By standard Feynman--Kac arguments, the problem is reduced to showing, for any $a>0$, 
\begin{align*}
\varlimsup_{\ell\to\infty}\varlimsup_{N\to\infty}\sup_f
\left\{\int_{X_N} W_{\ell,K}fd\nu_\alpha -(aN)^{-1}\int_{X_N} \sqrt f \left(-L_N\sqrt f\right) d\nu_\alpha \right\} = 0,
\end{align*}
where $W_{\ell,K}(\eta):=h_K(\eta)- H_{\ell,K}(\eta)$ 
and the supremum is taken over all densities $f$ with respect to $\nu_\alpha$: $\int fd\nu_\alpha=1$.
Since \(h\) is local,
we may assume that $\ell$ is sufficiently large
so that $\operatorname{supp} h\subset \Lambda_\ell$.
Moreover, by the same reasoning as in the proof of Lemma 6.3 of \cite{jls09},
the above supremum is bounded above by
\begin{align*}
\sup_{j\ge0}\sup_f\left\{\int_{\Sigma_{\Lambda_\ell, j}} W_{\ell,K}fd\nu_{\Lambda_\ell,j} -a^{-1}N\int_{\Sigma_{\Lambda_\ell, j}} \sqrt f \left(-L_{\Lambda_\ell}\sqrt f\right) d\nu_{\Lambda_\ell, j}\right\},
\end{align*}
where for each $\ell,j\in\N$, 
\begin{align*}
&\Lambda_\ell:=\{-\ell,\ldots, \ell\}, \quad \Sigma_{\Lambda_\ell, j}
:=\left\{ \eta\in\N^{\Lambda_\ell}: \sum_{x\in\Lambda_\ell}\eta(x)
=j \right\},\quad \nu_{\Lambda_\ell,j}(\cdot):=\nu_\alpha\left(\ \cdot \  | \Sigma_{\Lambda_\ell, j} \right),\\
&L_{\Lambda_\ell}f(\eta)
:=\dfrac12\sum_{\substack{x, y\in\Lambda_\ell \\ |x-y|=1}}
g(\eta(x))\left[f(\eta^{x,y})-f(\eta)\right], \quad f:\N^{\Lambda_\ell}\to\R,
\end{align*}
and the supremum in $f$ is taken over all densities $f$ with respect to $\nu_{\Lambda_\ell,j}$.
Moreover, in the definition of $\nu_{\Lambda_\ell,j}$,
the measure $\nu_\alpha$ is regarded as a probability measure on $\N^{\Lambda_\ell}$.

Let us decompose $W_{\ell,K}$ as
\begin{align*}
W_{\ell,K}(\eta)
&=\left(W_{\ell,K}(\eta) - \int_{\Sigma_{\Lambda_\ell, j}}
W_{\ell,K}d\nu_{\Lambda_\ell,j} \right)
+ \int_{\Sigma_{\Lambda_\ell, j}} W_{\ell,K}d\nu_{\Lambda_\ell,j}\\
&=: W_{\ell,j,K}(\eta)+\int_{\Sigma_{\Lambda_\ell, j}} W_{\ell,K}d\nu_{\Lambda_\ell,j}.
\end{align*}
Therefore, to conclude the proof, it is enough to show that for each $\ell$
\begin{align}\label{5-24}
\lim_{N\to\infty}\sup_{j\ge0}\sup_f
\left\{\int_{\Sigma_{\Lambda_\ell, j}}
W_{\ell,j,K}fd\nu_{\Lambda_\ell,j} -
a^{-1}N\int_{\Sigma_{\Lambda_\ell, j}} \sqrt f
\left(-L_{\Lambda_\ell}\sqrt f\right) d\nu_{\Lambda_\ell, j}\right\}=0,
\end{align}
and
\begin{align}\label{5-25}
\lim_{\ell\to\infty}\sup_{j\ge0}\left| \int_{\Sigma_{\Lambda_\ell, j}} W_{\ell,K}d\nu_{\Lambda_\ell,j} \right| = 0.
\end{align}

Let us turn to proving \eqref{5-24}.
Since $W_{\ell,j,K}$ has mean zero with respect to $\nu_{\Lambda_\ell,j}$ and $h_K$ is bounded ($h_K$ is supported only on finitely many configurations),
we can use \cite[Theorem A.3.1.1]{kl99}. Moreover, under the conditions \eqref{LG} and \eqref{M}, it has been established in \cite{lsv96} that
$L_{\Lambda_\ell}$ has a spectral gap on $\Sigma_{\Lambda_\ell, j}$ of order $\ell^{-2}$, uniformly in $j$.
Consequently, for sufficiently large $N$, we have
\begin{align*}
&\sup_{j\ge0}\sup_f\left\{\int_{\Sigma_{\Lambda_\ell, j}} W_{\ell,j,K}fd\nu_{\Lambda_\ell,j} -a^{-1}N\int_{\Sigma_{\Lambda_\ell, j}} \sqrt f \left(-L_{\Lambda_\ell}\sqrt f\right) d\nu_{\Lambda_\ell, j}\right\}\\
&\quad \le \dfrac{2a}{N} \sup_{j\ge0}\int_{\Sigma_{\Lambda_\ell, j}} W_{\ell,j,K} \left( -L_{\Lambda_\ell} \right)^{-1} W_{\ell,j,K} d\nu_{\Lambda_\ell,j} \le \dfrac{aC_0\ell^2}{N},
\end{align*}
where $C_0>0$ is a constant depending only on $h$ and $K$.
Thus, \eqref{5-24} is shown.

Finally, let us turn to proving \eqref{5-25}. Note that
\begin{align}\label{5-26}
\int_{\Sigma_{\Lambda_\ell, j}} W_{\ell,K}d\nu_{\Lambda_\ell,j} = \int_{\Sigma_{\Lambda_\ell, j}} h_Kd\nu_{\Lambda_\ell,j} - \int_{\N^{\Lambda_\ell}} h_K d\nu_{j/|\Lambda_\ell|}.
\end{align}
Let $\gamma>0$ and consider the case $(j/|\Lambda_\ell|)\le \gamma$.
Since $h_K$ is Lipschitz, we have
\begin{align*}
&\left|
\int_{\Sigma_{\Lambda_\ell,j}} h_Kd\nu_{\Lambda_\ell,j}
- \int_{\N^{\Lambda_\ell}} h_Kd\nu_{j/|\Lambda_\ell|}
\right| \\
&\quad \le
\int_{\Sigma_{\Lambda_\ell,j}}
|h_K-h_K(\mathfrak d)|d\nu_{\Lambda_\ell,j}\
+
\int_{\N^{\Lambda_\ell}}
|h_K-h_K(\mathfrak d)|d\nu_{j/|\Lambda_\ell|}\
\le C_0\gamma,
\end{align*}
for some constant $C_0>0$ depending only on $h$ and $K$.
Now consider the case $(j/|\Lambda_\ell|)\ge \gamma$. 
Since $h_K$ is bounded, by \cite[Corollary 6.1]{lsv96} and \eqref{2-5},
the absolute value of the expression in \eqref{5-26} is bounded above by
$C_1/(\ell\gamma)$ for some constant $C_1>0$ depending only on $h$ and $K$.
Therefore letting $\ell\uparrow\infty$ and $\gamma\downarrow0$ shows \eqref{5-25},
which completes the proof of Proposition \ref{prop5-3}.
\end{proof}

\begin{proof}[Proof of Proposition \ref{prop5-4}]
Since $H_{\ell,K}$ is bounded uniformly in $\ell$, to conclude the proposition it is enough to show
\begin{align*}
\varlimsup_{\e\to0}\varlimsup_{N\to\infty}
\dfrac{1}{N}\log \bb P_\alpha^N
\left( \left| \int_0^t \dfrac{1}{\e N}\sum_{x=2\ell+1}^{\e N}\left[ H_{\ell,K}(\eta_s) -  H_{\ell,K}(\tau_x\eta_s)  \right] ds \right| \ge\delta\right)  = -\infty,
\end{align*}
for any $\delta>0$.
Following the proof of Proposition \ref{prop5-3}, letting $W_{\ell,K,x}(\eta):=H_{\ell,K}(\eta)-H_{\ell,K}(\tau_x\eta)$,
we reduce the problem to showing, for any $a>0$,
\begin{align*}
\varlimsup_{\e\to0}\varlimsup_{N\to\infty}
\sup_{f,x}\left\{\int_{X_N} W_{\ell,K,x}fd\nu_\alpha -(aN)^{-1}\int_{X_N} \sqrt f \left(-L_N\sqrt f\right) d\nu_\alpha \right\} = 0,
\end{align*}
where the supremum is taken over all densities $f$ with respect to $\nu_\alpha$ and $x$ with $2\ell+1\le x \le \e N$.
Again by the same reasoning as in the proof of \cite[Lemma 6.5]{jls09},
the above supremum is bounded above by
\begin{align*}
\sup_{j\ge0}\sup_f\left\{\int_{\Sigma_{\Lambda_\ell, j}^*} \tilde W_{\ell,K}fd\nu_{\Lambda_\ell^*,j} -\dfrac{1}{2a\e}\int_{\Sigma_{\Lambda_\ell, j}^*} \sqrt f \left(-L_{\Lambda_\ell^*}\sqrt f\right) d\nu_{\Lambda_\ell^*, j}\right\},
\end{align*}
where for each $\ell,K,j\in\N$,
\begin{align*}
&\Lambda_\ell^*:=\{-\ell,\ldots, 3\ell+1\}, \quad \Sigma_{\Lambda_\ell, j}^*:=\left\{ \eta\in\N^{\Lambda_\ell^*}: \sum_{x\in\Lambda_\ell^*}\eta(x)=j \right\}, \quad\nu_{\Lambda_\ell^*,j}(\cdot):=\nu_\alpha\left(\ \cdot \ | \Sigma_{\Lambda_\ell, j}^*\right),\\
&\tilde W_{\ell,K}(\xi, \zeta):= H_{\ell,K}(\xi) - H_{\ell,K}(\zeta), \quad \eta=(\xi, \zeta)\in\N^{\Lambda_\ell^*}\simeq\N^{\Lambda_\ell}\times\N^{\Lambda_\ell},\\
&L_{\Lambda_\ell^*}f(\eta):=\dfrac12\sum_{\substack{x, y\in\Lambda_\ell^* \\ |x-y|=1}}g(\eta(x))\left[f(\eta^{x,y})-f(\eta)\right], \quad f:\N^{\Lambda_\ell^*}\to\R,
\end{align*}
and the supremum in $f$ is taken over all densities $f$ with respect to $\nu_{\Lambda_\ell^*,j}$.
Moreover, in the definition of $\nu_{\Lambda_\ell^*,j}$, the measure $\nu_\alpha$ is regarded as a probability measure on $\N^{\Lambda_\ell^*}$.
Since the above supremum is independent of $N$, to conclude the proof, 
it is enough to show that
\begin{align*}
\lim_{\e\to0}\sup_{j\ge0}\sup_f\left\{\int_{\Sigma_{\Lambda_\ell, j}^*} \tilde W_{\ell,K}fd\nu_{\Lambda_\ell^*,j} -\dfrac{1}{2a\e}\int_{\Sigma_{\Lambda_\ell, j}^*} \sqrt f \left(-L_{\Lambda_\ell^*}\sqrt f\right) d\nu_{\Lambda_\ell^*, j}\right\}=0,
\end{align*}
where the supremum is taken over all densities $f$ with respect to $\nu_{\Lambda_\ell^*, j}$.

Unlike in the one-block estimate, the expectation of $\tilde W_{\ell, K}$ with respect to $\nu_{\Lambda_\ell^*,j}$ vanishes,
so we can apply the Rayleigh estimate \cite[Theorem A.3.1.1]{kl99} at this stage (in our setting, an estimate corresponding to \cite[Lemma 6.6]{jls09} is not needed).
Since $L_{\Lambda_\ell^*}$ has a spectral gap on $\Sigma_{\Lambda_\ell, j}^*$ of order $\ell^{-2}$ uniformly in $j$,
by the Rayleigh estimate, we obtain for all sufficiently small $\e$
\begin{align*}
\sup_{j\ge0}\sup_f\left\{\int_{\Sigma_{\Lambda_\ell, j}^*} \tilde W_{\ell,K}fd\nu_{\Lambda_\ell^*,j} -\dfrac{1}{2a\e}\int_{\Sigma_{\Lambda_\ell, j}^*} \sqrt f \left(-L_{\Lambda_\ell^*}\sqrt f\right) d\nu_{\Lambda_\ell^*, j}\right\}
\le C_0a\ell^2\e,
\end{align*}
for some constant $C_0>0$ depending only on $h$ and $K$.
Letting $\e\to0$ completes the proof of Proposition \ref{prop5-4}.
\end{proof}

In order to prove Lemma \ref{lem5-3}, we shall replace $g(\eta(0))$
and $g_K(\eta(0))$ with averages as in Lemma \ref{lem5-2}, respectively.
By Lemma \ref{lem5-2}, $g_K(\eta(0))$ can be replaced with
\begin{align*}
\dfrac{1}{\e N}\sum_{x=1}^{\e N} \overline \Phi_{\ell,K}(\eta^{\varepsilon N}(x)) .
\end{align*}
On the other hand, since $g(\eta(0))$ is unbounded,
we cannot replace $g(\eta(0))$ by the same reasoning.
Instead, we use a change-of-variables argument to obtain the required replacement estimate.
The argument is based on ideas appearing in Lemma 3.1 of \cite{lan92} and Lemma 2.1 of \cite{ben96}.

\begin{lemma}\label{lem5-4}
For any $0\le t \le T$ and  $\delta>0$, we have
\begin{align}\label{5-1}
\varlimsup_{\e\to0}\varlimsup_{\varepsilon\to0}\varlimsup_{N\to\infty}
\dfrac{1}{N}\log \bb P_\alpha^N\left( \left| \int_0^t V_2(\eta_s) ds \right| \ge\delta\right)  = -\infty ,
\end{align}
where
\begin{align*}
V_2(\eta) \equiv  V_{2,N,\varepsilon,\e}(\eta) :=  g(\eta(0)) -\dfrac{1}{\e N}\sum_{x=1}^{\e N}  \Phi(\eta^{\varepsilon N}(x)) .
\end{align*}
\end{lemma}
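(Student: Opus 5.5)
The plan is to follow the change-of-variables (exponential martingale) argument of \cite[Lemma 3.1]{lan92} and \cite[Lemma 2.1]{ben96}, exploiting the gradient structure of the zero-range process. The key observation is that for a linear functional $F_G(\eta):=\sum_{y}G(y)\eta(y)$ we have
\begin{align*}
L_N F_G(\eta)=\frac{N^2}{2}\sum_{y\in\T_N} g(\eta(y))\bigl[G(y+1)+G(y-1)-2G(y)\bigr].
\end{align*}
So $g(\eta(0))-(\e N)^{-1}\sum_{y=1}^{\e N}g(\eta(y))$ is exactly $L_NF_G$ for a suitable $G$. I will first prove the statement with $(\e N)^{-1}\sum_{y=1}^{\e N} g(\eta(y))$ in place of $(\e N)^{-1}\sum_{x}\Phi(\eta^{\varepsilon N}(x))$. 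Then I will replace this microscopic block average by $(\e N)^{-1}\sum_{x}\Phi(\eta^{\varepsilon N}(x))$ using the classical replacement lemma for the zero-range process \cite[Theorem 2]{bkl95}, applied with the unbounded function $g$. That lemma is already stated for $g$ itself, as in the derivation of the hydrodynamic large deviations, so the order $\varepsilon\to0$ then $\e\to0$ is handled there.

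For the first part, fix $a>0$ and choose $G=G_{N,\e}$ with discrete Laplacian
\begin{align*}
N^2\bigl[G(y+1)+G(y-1)-2G(y)\bigr]=2a\Bigl(\mathbf 1\{y=0\}-(\e N)^{-1}\mathbf 1\{1\le y\le \e N\}\Bigr).
\end{align*}
This $G$ is a piecewise-linear-quadratic tent supported on $\{0,\dots,\e N\}$ (the right-hand side has zero sum). It has slopes $|\nabla G|\le Ca/N^2$ and height $\|G\|_\infty\le Ca\e/N$.

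Consider the exponential martingale $M_t=\exp\{N[F_G(\eta_t)-F_G(\eta_0)]-\int_0^t e^{-NF_G}L_Ne^{NF_G}(\eta_s)ds\}$. Expanding $e^{-NF_G}L_Ne^{NF_G}=N L_NF_G+Q$, the quadratic remainder satisfies
\begin{align*}
|Q(\eta)|\le C N^4\sum_y g(\eta(y))|\nabla G(y)|^2\le Ca^2\e\,\frac{1}{\e N}\sum_{y=0}^{\e N}g(\eta(y)),
\end{align*}
using \eqref{LG} and $N|\nabla G|\le Ca/N\to0$. Hence
\begin{align*}
aN\int_0^tV(\eta_s)ds=N[F_G(\eta_t)-F_G(\eta_0)]-\log M_t-\int_0^tQ(\eta_s)ds .
\end{align*}
By Chebyshev and Hölder, $\frac1N\log\bb P^N_\alpha(\int_0^t V\ge\delta)\le -a\delta/2$ plus three terms. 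The martingale term is harmless since $\bb E[M_t]=1$.

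The boundary term is bounded by $\frac1N\log E_{\nu_\alpha}[\exp\{4N\|G\|_\infty\sum_{y\le\e N}\eta(y)\}]$ (using stationarity for $\eta_t$). This equals $\e\log E_{\nu_\alpha}[e^{Ca\e\eta(0)}]\le C a\e^2$ by independence and exponential moments of the one-site marginal. The quadratic term requires a bound, at speed $N$, on $\int_0^t(\e N)^{-1}\sum_{y\le \e N}g(\eta_s(y))ds$ with prefactor $Ca^2\e$. By Feynman--Kac and the fact that $\nu_\alpha$ is invariant, $\frac1N\log\bb E_\alpha^N[\exp\{bN\int_0^t W\}]\le t\sup_f\{b\int W f d\nu_\alpha-N^{-1}\langle\sqrt f,-L_N\sqrt f\rangle\}$. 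Dropping the Dirichlet form and using the entropy inequality, this is $\le t\,\frac1N\log E_{\nu_\alpha}[e^{bN W}]$, which is $\le Ct\e\log E_{\nu_\alpha}[e^{Cb\eta(0)/\e}]$. This is finite and $O(a^2\e)$ when $b=Ca^2\e$ and $\e$ is small, since $b/\e=Ca^2$ is of order one. Collecting terms gives
\begin{align*}
\varlimsup_{N\to\infty}\frac1N\log\bb P_\alpha^N\Bigl(\int_0^tV\ge\delta\Bigr)\le -\frac{a\delta}{2}+C(a\e^2+a^2\e).
\end{align*}
Letting $\e\to0$ and then $a\to\infty$ gives $-\infty$. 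The symmetric case follows with $-G$.

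The main obstacle I expect is the quadratic term $Q$. It involves the unbounded $g$, and its coefficient $a^2\e\cdot(\e N)^{-1}\sum g$ per unit time is only small because the exponential moment of $\eta(0)$ at parameter $Ca^2$ is finite. The argument must therefore use the product structure of $\nu_\alpha$ over the block of size $\e N$, rather than any spectral-gap bound, and the constants must be tracked so that $a\to\infty$ is taken only after $\e\to0$. If the crude entropy bound loses too much, I would instead truncate with Lemma \ref{lem5-2}-type bounds on $g_K$ and handle only the high-density part with the exponential moment.
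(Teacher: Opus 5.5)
Your exponential-martingale route is sound and genuinely different from the paper's, but three steps fail as written and need the standard repairs below.

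\emph{(i) The martingale enters with the wrong sign.} Your identity $aN\int_0^t V=N[F_G(\eta_t)-F_G(\eta_0)]-\log M_t-\int_0^t Q$ means that H\"older produces $\bb E[M_t^{-p}]$, which $\bb E[M_t]=1$ does not control. Use instead the exponential martingale $M^{-2G}$ built from $-2G$. It satisfies
\[
\exp\Bigl\{aN\int_0^t V(\eta_s)\,ds\Bigr\}=\bigl(M^{-2G}_t\bigr)^{1/2}\exp\Bigl\{N\bigl[F_G(\eta_t)-F_G(\eta_0)\bigr]+\frac12\int_0^t Q_{-2G}(\eta_s)\,ds\Bigr\},
\]
where $Q_{-2G}$ is of the same order as $Q$. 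Cauchy--Schwarz then lets the martingale appear only to the first power.

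\emph{(ii) $G$ is not supported on $\{0,\dots,\e N\}$.} Zero sum of $\Delta G$ gives solvability, but compact support would also force $\sum_y y\,\Delta G(y)=0$, and here this sum equals $-a(\e N+1)/N^2$. On $\T_N$ the solution also has an affine part of slope about $-a\e/N^2$ on the whole complement of the block. Hence:
\begin{itemize}
\item the boundary term involves the total mass and is of order $\log E_{\nu_\alpha}[e^{Ca\e\eta(0)}]=O(a\e)$;
\item $Q$ acquires an extra piece $Ca^2\e^2\sum_{y\in\T_N}g(\eta(y))$, which contributes $O(a^2\e^2)$.
\end{itemize}
Both still vanish as $\e\to0$, so this only changes the bookkeeping. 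Your displayed bound on $Q$ is also missing a factor $N$: it should be $Ca^2\sum_{y=0}^{\e N}g(\eta(y))$, which is the size you actually use afterwards.

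\emph{(iii) The Feynman--Kac/entropy step is invalid.} Once the Dirichlet form is dropped, the variational bound becomes $\sup_f\int Uf\,d\nu_\alpha=\operatorname{ess\,sup}U=\infty$. The entropy inequality cannot help, because there is no entropy term to pay with. The right tool is Jensen in time together with stationarity:
\[
\bb E_\alpha^N\Bigl[\exp\Bigl\{\int_0^t U(\eta_s)\,ds\Bigr\}\Bigr]\le\frac1t\int_0^t\bb E_\alpha^N\bigl[e^{tU(\eta_s)}\bigr]\,ds=E_{\nu_\alpha}\bigl[e^{tU}\bigr].
\]
This is exactly how the paper proves \eqref{5-3}. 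The product structure then gives $\e\log E_{\nu_\alpha}[e^{Cta^2g(\eta(0))}]+o(1)$. This is $\e\,c(a)$ with $c(a)$ growing much faster than $a^2$, so it is not $O(a^2\e)$. That is harmless, since $\e\to0$ before $a\to\infty$.

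\textbf{Comparison with the paper.} With these repairs your argument proves the paper's intermediate estimate \eqref{5-2} for $\widetilde V_2$ (your $V$). The passage to $V_2$ through \cite[Theorem 2]{bkl95} is the same in both. The paper never builds a martingale. It goes straight to the Feynman--Kac variational problem and uses the change of variables $\eta\mapsto\eta^{0,x}$, which telescopes into
\[
\int\widetilde V_2 f\,d\nu_\alpha\le 4\sqrt{\e N}\Bigl(\int g^{\e N}f\,d\nu_\alpha\Bigr)^{1/2}D_N(f)^{1/2}.
\]
It then controls the unbounded right-hand side by inserting, via H\"older, the superlinear penalty $\e\,\omega(g^{\e N})$, whose cost is bounded by \eqref{5-3}.

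The two arguments are the variational and pathwise forms of the same $H^{-1}$-type bound. In both, the cost at speed $N$ is of order $a^2\e$ times the block average of $g$, and both rely on the super-exponential tails of the one-site marginal. Your version is more explicit: it exploits the exact identity $L_NF_G=a\widetilde V_2$ coming from the gradient structure. The price is the martingale and torus-wide boundary bookkeeping. The paper's version avoids solving a discrete Poisson equation and stays within the same Dirichlet-form framework as its other replacement lemmas.
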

\begin{proof}

To prove \eqref{5-1}, it is enough to show that,
for any $0\le t \le T$ and  $\delta>0$,
\begin{align}\label{5-2}
\varlimsup_{\e\to0}\varlimsup_{N\to\infty}
\dfrac{1}{N}\log \bb P_\alpha^N\left( \left| \int_0^t \widetilde V_2(\eta_s) ds \right| \ge\delta\right)  = -\infty ,
\end{align}
where
\begin{align*}
\widetilde V_2(\eta) \equiv  \widetilde V_{2,N,\e}(\eta) :=  g(\eta(0)) -\dfrac{1}{\e N}\sum_{x=1}^{\e N}  g(\eta(x)) .
\end{align*}
Indeed, \eqref{5-1} is a direct consequence of \eqref{5-2} together with 
the super-exponential replacement lemma established in \cite[Theorem 2]{bkl95}.

Let $C_0$ be the constant appearing in \eqref{2-1}
and set $\omega(\rho):=(2C_0)^{-1}\rho\log(1+\rho), \rho\ge0$.
Note that $\omega$ is convex and superlinear and satisfies
\begin{align}\label{5-14}
r :=  \log E_{\nu_\alpha}\left[\exp\left\{ \omega(g(\eta(0)))\right\} \right] <\infty,
\end{align}
by Stirling's formula.
We next show that
\begin{align}\label{5-3}
\varlimsup_{\e\to0}\varlimsup_{N\to\infty} \dfrac{1}{N}\log
\bb E_\alpha^N\left[\exp\left\{T^{-1}\e N\int_0^T \omega(g^{\e N}(\eta_s))ds\right\}\right]  \le  0 ,
\end{align}
where $g^{\e N}(\eta):=(\e N+1)^{-1}\sum_{x=0}^{\e N} g(\eta(x))$.
Indeed, by the invariance of $\nu_\alpha$ and the convexity of $\omega$, we have
\begin{align*}
&\bb E_\alpha^N\left[\exp\left\{T^{-1}\e N\int_0^T \omega(g^{\e N} (\eta_s))ds\right\}\right]\\
&\quad \le  E_{\nu_\alpha}\left[ \exp\left\{\e N\omega(g^{\e N}(\eta))\right\}\right]
\le  E_{\nu_\alpha}\left[ \exp\left\{\sum_{x=0}^{\e N}\omega(g(\eta(x)))\right\}\right]
=  e^{(\e N+1)r} .
\end{align*}
Thus, \eqref{5-3} has been shown.

Fix $a>0$. By the exponential Chebyshev inequality, we have
\begin{align*}
\dfrac{1}{N}\log \bb P_\alpha^N\left(  \int_0^t \widetilde V_2(\eta_s) ds \ge\delta\right)  \le  -a\delta +\dfrac{1}{N}\log 
\bb E_\alpha^N \left[ \exp \left\{ aN\int_0^t \widetilde V_2(\eta_s) ds \right\} \right]  .
\end{align*}
By the H\"older inequality, the second term of the right-hand side is bounded by
\begin{align*}
&\dfrac{1}{2N}\log \bb E_\alpha^N \left[ \exp \left\{ N\int_0^t 2a\widetilde V_2(\eta_s)-T^{-1}\e\omega(g^{\e N} (\eta_s)) ds \right\} \right]\\
&\quad+\dfrac{1}{2N}\log \bb E_\alpha^N \left[ \exp \left\{ T^{-1}\e N\int_0^t \omega(g^{\e N} (\eta_s)) ds \right\} \right] .
\end{align*}
Therefore, by \eqref{5-3} and standard Feynman--Kac arguments,
to conclude the proof it is enough to show that, for any $a>0$, the supremum
\begin{align}\label{5-4}
\sup_{f} \left \{ 2a\int \widetilde V_2 (\eta)f(\eta)\nu_\alpha(d\eta) - ND_N(f)
- T^{-1}\e\int  \omega(g^{\e N}(\eta))f(\eta)\nu_\alpha(d\eta) \right \} ,
\end{align}
vanishes as $N\to\infty$ and $\e\to0$,
where the supremum is taken over all densities $f$ with respect to $\nu_\alpha$
and $D_N(f)$ is the Dirichlet form given by
\begin{align*}
D_N(f):=\dfrac12\sum_{x\in\T_N}\int\left(\sqrt{f(\eta^{x,x+1})} - \sqrt{f(\eta)} \right)^2 g(\eta(x))\nu_\alpha(d\eta).
\end{align*}

We now claim that for any density $f$, we have
\begin{align}\label{5-5}
\int \widetilde V_2 (\eta)f(\eta)\nu_\alpha(d\eta) \le 
4\sqrt{\e N} \left(\int g^{\e N}(\eta)f(\eta)\nu_\alpha(d\eta)\right)^{1/2}D_N(f)^{1/2} .
\end{align}
To see this, for each $x=1,\dots, \e N$, we start with the change of variables $\eta^{0,x}\mapsto\xi$ to obtain
\begin{align}\label{5-22}
\int f(\eta^{0,x})g(\eta(0))\nu_\alpha(d\eta)
 =  \int f(\xi) g(\xi(x)) \nu_\alpha(d\xi) ,
\end{align}
and hence
\begin{align*}
\int \widetilde V_2 (\eta)f(\eta)\nu_\alpha(d\eta)
&= \dfrac{1}{\e N}\sum_{x=1}^{\e N}\int\left( f(\eta) -f(\eta^{0,x}) \right)g(\eta(0))\nu_\alpha(d\eta)\\
&= \dfrac{1}{\e N}\sum_{x=1}^{\e N}\sum_{0\le y \le x-1}\int\left( f(\eta^{0,y}) -f(\eta^{0,y+1}) \right)g(\eta(0))\nu_\alpha(d\eta)\\
&\le \sum_{x=0}^{\e N-1}\int\left| f(\eta^{0,x}) -f(\eta^{0,x+1}) \right|g(\eta(0))\nu_\alpha(d\eta),
\end{align*}
with the convention $\eta^{0,0}:=\eta$.
Using the decomposition
\begin{align*}
f(\eta^{0,y}) -f(\eta^{0,y+1}) = \left(\sqrt{f(\eta^{0,y})} - \sqrt{f(\eta^{0,y+1})} \right) \left(\sqrt{f(\eta^{0,y})} + \sqrt{f(\eta^{0,y+1})} \right),
\end{align*}
and the Schwarz inequality twice, we can bound the left-hand side of \eqref{5-5} by
\begin{align*}
&\left[ \sum_{x=0}^{\e N-1} \int \left(\sqrt{f(\eta^{0,x})} - \sqrt{f(\eta^{0,x+1})} \right)^2g(\eta(0))\nu_\alpha(d\eta) \right]^{1/2}\\
&\quad\times\left[ \sum_{x=0}^{\e N-1} \int \left(\sqrt{f(\eta^{0,x})} + \sqrt{f(\eta^{0,x+1})} \right)^2g(\eta(0))\nu_\alpha(d\eta) \right]^{1/2}.
\end{align*}
By \eqref{5-22}, the first line of the last expression is bounded above by $(2D_N(f))^{1/2}$.
For the second factor, again by \eqref{5-22}, we have
\begin{align*}
\left[2\sum_{x=0}^{\e N-1} \int \left(f(\eta^{0,x}) + f(\eta^{0,x+1}) \right)g(\eta(0))\nu_\alpha(d\eta) \right]^{1/2}
\le  \left[8\e N \int g^{\e N}(\eta)f(\eta)\nu_\alpha(d\eta) \right]^{1/2},
\end{align*}
for $N$ sufficiently large.
Thus, \eqref{5-5} has been shown.

On the other hand, by the convexity of $\omega$, we have
\begin{align}\label{5-6}
\int\omega(g^{\e N}(\eta))f(\eta)\nu_\alpha(d\eta)  \ge 
\omega\left( \int g^{\e N}(\eta)f(\eta)\nu_\alpha(d\eta) \right) .
\end{align}
From \eqref{5-5} and \eqref{5-6}, the supremum \eqref{5-4} can be bounded above by
\begin{align*}
\sup_{x\ge0, y\ge0} \left\{8a\sqrt{\e xy} - x -T^{-1}\e\omega(y) \right\}
=\e\sup_{y\ge0}\left\{ 16a^2y-\dfrac{1}{T}\omega(y)\right\},
\end{align*}
Therefore, the last expression vanishes as $\e\to0$,
which completes the proof of Lemma \ref{lem5-4}.
\end{proof}

We are now in the position to prove Lemma \ref{lem5-3}.

\begin{proof}[Proof of Lemma \ref{lem5-3}]
By translation invariance, it is enough to show the lemma for $x=0$.
Since $g-g_K\ge0$, it is enough to prove the assertion for $t=T$. Let
\begin{align*}
V_3(\eta) &\equiv  V_{3,N,\varepsilon,\e,\ell,K}(\eta) :=  g_K(\eta(0)) -\dfrac{1}{\e N}\sum_{x=1}^{\e N} \overline \Phi_{\ell,K}(\eta^{\varepsilon N}(x)) ,\\
V_4(\eta) &\equiv V_{4,N,\varepsilon,\e,\ell,K}(\eta) :=  \dfrac{1}{\e N}\sum_{x=1}^{\e N}  \left[ \Phi(\eta^{\varepsilon N}(x)) - \overline \Phi_{\ell,K}(\eta^{\varepsilon N}(x)) \right]  .
\end{align*}
Note that $g(\eta(0))-g_K(\eta(0))=V_2(\eta)-V_3(\eta)+V_4(\eta)$. By the union bound,
\begin{align*}
&\bb P_\alpha^N\left( \int_0^T \left[ g(\eta_s(0)) - g_K(\eta_s(0)) \right] ds \ge\delta\right)\\
& \le  \bb P_\alpha^N\left( \left| \int_0^T V_2(\eta_s) ds \right| \ge\delta/3\right)
+ \bb P_\alpha^N\left( \left| \int_0^T V_3(\eta_s) ds \right| \ge\delta/3\right)
+ \bb P_\alpha^N\left( \int_0^T V_4(\eta_s) ds \ge\delta/3\right) .
\end{align*}
Since Lemma \ref{lem5-4} holds for the case $h(\eta)=g(\eta(0))$,
the first term vanishes. The second term also vanishes thanks to Lemma \ref{lem5-2}.
Therefore, to conclude the proof of Lemma \ref{lem5-3},
it is enough to show that
\begin{align}\label{5-7}
\varlimsup_{K\to\infty}\varlimsup_{\ell\to\infty}\varlimsup_{\e\to0}\varlimsup_{\varepsilon\to0}\varlimsup_{N\to\infty}
\dfrac{1}{N}\log \bb P_\alpha^N\left( \int_0^T V_4(\eta_s)ds \ge\delta\right)  = -\infty .
\end{align}

Again by the exponential Chebyshev inequality, the problem is reduced to showing
\begin{align*}
\varlimsup_{K\to\infty}\varlimsup_{\ell\to\infty}\varlimsup_{\e\to0}\varlimsup_{\varepsilon\to0}\varlimsup_{N\to\infty}
\dfrac{1}{N}\log \bb E_\alpha^N\left[ \exp\left\{ aN \int_0^T V_4(\eta_s)ds \right\}\right]  = 0 ,
\end{align*}
for any $a>0$.
Recall the notation of Proposition \ref{prop5-6}.
For fixed $\varepsilon,\e,\ell,K$,
one can see that the functional $J_{\varepsilon,\e,\ell,K}$ is continuous on $\Omega_T$.
Moreover, the same discretization estimate,
together with the exponential moment bound below,
allows us to replace $\int_0^T V_4(\eta_s)\,ds$
by $J_{\varepsilon,\e,\ell,K}(\pi^N)$ in the logarithmic moment. Finally,
$$
|J_{\varepsilon,\e,\ell,K}(\pi^N)|
\le C_{\varepsilon,\e,\ell,K}\pi_0^N(\T),
$$
and hence the exponential tail condition for the unbounded version of Varadhan's lemma follows from the exponential moments of $\nu_\alpha$.
By Varadhan's lemma, 
\begin{align*}
\lim_{N\to\infty}\dfrac{1}{N}\log \bb E_\alpha^N\left[ \exp\left\{ aN \int_0^T V_4(\eta_s)ds \right\}\right]  =  
\sup_{\pi \in \Omega_T}\left\{ aJ_{\varepsilon,\e,\ell,K}(\pi) - I_\alpha(\pi) \right\}.
\end{align*}
Moreover, by Proposition \ref{prop5-6}, we have
\begin{align*}
\varlimsup_{\e\to0}\varlimsup_{\varepsilon\to0}\sup_{\pi\in \Omega_T}\left\{ aJ_{\varepsilon,\e,\ell,K}(\pi) - I_\alpha(\pi) \right\} 
=  \sup_{\pi \in \Omega_T^{I_\alpha}}\left\{ aJ_{\ell,K}(\pi) - I_\alpha(\pi) \right\} .
\end{align*}

Introducing the indicator functions $ {\bf 1}\{\rho(s,0)> A\}+ {\bf 1}\{\rho(s,0)\le A\}$
for each $A>0$
and noting $0\le\overline\Phi_{\ell,K}(\rho), \Phi(\rho)\le C_0\rho$ for any $\rho\ge0$,
we can bound $J_{\ell,K}(\pi)$ as
\begin{align*}
J_{\ell,K}(\pi) \le  C_0\int_0^T \rho(t,0) {\bf 1}\{\rho(t,0)\ge A\}dt
+ T\sup_{0\le\rho\le A} \left| R_{\ell,K}(\rho) \right| ,
\end{align*}
where $R_{\ell, K}(\rho):= \Phi(\rho) - \overline \Phi_{\ell,K}(\rho), \rho\ge0$.
From Proposition \ref{prop5-1} applied with $h(\eta)=g(\eta(0))$, the second term on the right-hand side vanishes as
$\ell\to\infty$ and $K\to\infty$ for each $A>0$.
Therefore, to conclude the proof it is enough to show that for any $a>0$
\begin{align*}
\lim_{A\to\infty}\sup_{\pi\in\Omega_T^{I_\alpha}}
\left\{ a\int_0^T \rho(t,0) {\bf 1}\{\rho(t,0)\ge A\}dt - I_\alpha(\pi) \right\}  =  0 .
\end{align*}
This is exactly the content of Proposition \ref{prop5-2}, which completes \eqref{5-7} and
the proof of Lemma \ref{lem5-3}.
\end{proof}

\section{Proof of the additive functional LDP}\label{sec:proof}

In this section, we prove Theorem \ref{thm5-3}.
Fix $0<\varepsilon<1/2$, $0<\e<1$ and $\ell,K\in\N$ and define the function $f_{N,\varepsilon, \e, \ell, K}:X_N\to\R$ by
\begin{align*}
f_{N,\varepsilon, \e, \ell, K}(\eta):=\dfrac{1}{\e N}\sum_{x=1}^{\e N} \overline H_{\ell,K}(\eta^{\varepsilon N}(x)), \quad \eta\in X_N.
\end{align*}
For fixed $\varepsilon,\e,\ell,K$, the corresponding functional of the empirical measure is continuous on $\Omega_T$ endowed with the Skorokhod $J_1$ topology. Indeed, this follows by approximating $\iota_\varepsilon$ in $L^1(\T)$ by continuous functions and using the Lipschitz continuity of $\overline H_{\ell,K}$.

Moreover, for fixed $\varepsilon,\e,\ell,K$, the difference between the discrete functional above and the corresponding functional of $\pi^N$ is bounded by
$$
\frac{C_{\varepsilon,\e,\ell,K}}{N}\bigl(1+\pi_0^N(\T)\bigr).
$$
Since the total mass is conserved and $\nu_\alpha$ has exponential moments of all orders, this difference is super-exponentially negligible at speed $N$.

\begin{proof}[Proof of the lower bound of Theorem \ref{thm5-3}]
Let $O$ be an open subset of $\R$ and fix $a\in O$.
To conclude the proof, it is enough to show that
\begin{align*}
\varliminf_{N\to\infty} \dfrac1N \log \bb P_\alpha^N\left( \Gamma_h^N \in O \right) \ge -\psi(a).
\end{align*}
Let $\delta>0$ be such that $[a-2\delta, a+2\delta]\subset O$. Then clearly, we have
\begin{align*}
\bb P_\alpha^N\left( \Gamma_h^N \in O \right) \ge \bb P_\alpha^N\left( \Gamma_h^N \in [a-2\delta, a+2\delta] \right).
\end{align*}
It follows from Theorem \ref{thm5-2} that
\begin{align*}
&\varliminf_{N\to\infty} \dfrac1N \log \bb P_\alpha^N\left( \Gamma_h^N \in O \right)\\
&\quad\ge \varlimsup_{\delta\to0}\varlimsup_{K\to\infty}\varlimsup_{\ell\to\infty}\varlimsup_{\e\to0} \varlimsup_{\varepsilon\to0}\varliminf_{N\to\infty} \dfrac1N \log \bb P_\alpha^N\left( \int_0^T f_{N,\varepsilon, \e, \ell, K}(\eta_t)dt \in (a-\delta, a+\delta) \right).
\end{align*}
By the contraction principle, for each fixed $\varepsilon, \e,\ell,K$,  we have
\begin{align*}
\varliminf_{N\to\infty} \dfrac1N \log \bb P_\alpha^N\left( \int_0^T f_{N,\varepsilon, \e, \ell, K}(\eta_t)dt \in (a-\delta, a+\delta) \right)
\ge - \inf_{b\in (a-\delta, a+\delta)} \psi_{\varepsilon, \e, \ell, K}(b).
\end{align*}
Therefore, the lower bound follows from Lemma \ref{lem5-11}.
\end{proof}

\begin{proof}[Proof of the upper bound of Theorem \ref{thm5-3}]
Let $C$ be a closed subset of $\R$. For each $\delta>0$, let
\begin{align*}
C^\delta:=\left\{a\in\R: \inf_{b\in C}|a -b|\le \delta \right\}.
\end{align*}
Note that
\begin{align*}
\bb P_\alpha^N\left( \Gamma_h^N \in C \right) \le 
\bb P_\alpha^N\left( \left| \int_0^T V(\eta_s)ds \right| \ge\delta \right) + \bb P_\alpha^N\left( \int_0^T f_{N,\varepsilon, \e, \ell, K}(\eta_t)dt \in C^\delta \right).
\end{align*}
The first term is super-exponentially small by Theorem \ref{thm5-2}.
On the other hand, for each fixed $\varepsilon,\e,\ell,K$,
again by the contraction principle, we have
\begin{align*}
\varlimsup_{N\to\infty} \dfrac1N \log \bb P_\alpha^N\left( \int_0^T f_{N,\varepsilon, \e, \ell, K}(\eta_t)dt \in C^\delta \right)
\le - \inf_{a\in C^\delta}\psi_{\varepsilon, \e, \ell, K}(a).
\end{align*}
Therefore, the upper bound follows from Lemma \ref{lem5-10}.
\end{proof}

\section*{Acknowledgment}
I. Seo has been supported by National Research Foundation of Korea funded by the Ministry of Science and ICT (RS-2025-23525546, RS-2023-NR076621, and RS-2026-25518141).
K. Tsunoda has been supported by JSPS KAKENHI Grant Number JP26K06832.

\section*{Use of generative AI}
Generative AI tools were used for language editing, typesetting assistance, and auxiliary checks of exposition and mathematical consistency. All mathematical ideas, arguments, and proof strategies in this work were developed by the authors, who take full responsibility for the final content.

\renewcommand{\bibliofont}{\fontsize{10}{10.6}\selectfont}

\end{document}